\definecolor{black}{rgb}{0.0, 0.0, 0.0}
\definecolor{red}{rgb}{1.0, 0.5, 0.5}
\title[   ]{Decay of large solutions around shocks to multi-D viscous conservation law with strictly convex flux}
\author[Kang]{Moon-Jin Kang}
\author[Oh]{HyeonSeop Oh}
\address[Moon-Jin Kang]{
\newline Department of Mathematical Sciences, \newline Korea Advanced Institute of Mathematical Sciences, Daejeon 34141, Korea}
\email{moonjinkang@kaist.ac.kr}
\address[HyeonSeop Oh]{
\newline Department of Mathematical Sciences, \newline Korea Advanced Institute of Mathematical Sciences, Daejeon 34141, Korea}
\email{ohs2509@kaist.ac.kr}
\newtheorem{theorem}{Theorem}[section]
\newtheorem{lemma}{Lemma}[section]
\newtheorem{remark}{Remark}[section]
\newcommand{\bbr}{\mathbb R}
\newcommand{\e}{\varepsilon}
\numberwithin{figure}{section}
\newcommand{\beq}{\begin{equation}}
\newcommand{\eeq}{\end{equation}}
\newcommand{\bsp}{\begin{split}}
\newcommand{\esp}{\end{split}}
\newcommand{\tiu}{{\tilde{u}}}
\newcommand{\RR}{{\mathbb R}}
\newcommand{\s}{\sigma}
\def\eps{\varepsilon }
\newcommand\adots{\mathinner{\mkern2mu\raise1pt\hbox{.}
\mkern3mu\raise4pt\hbox{.}\mkern1mu\raise7pt\hbox{.}}}
\renewcommand{\div}{{\rm div}}
\newcommand{\intT}{\int_{\TTT^N}}
\newtheorem{theo}{Theorem}[section]
\newtheorem{prop}[theo]{Proposition}
\def\charf {\mbox{{\text 1}\kern-.30em {\text l}}}
\newcommand{\util}{{\tilde{u}}}
\def \l {\lambda}
\def \intR  {\int_\mathbb{R}}
\def \intT  {\int_{\mathbb{T}^2}}
\def \intRT  {\iint_{\mathbb{R} \times \mathbb{T}^2}}
\def \intZT {\int_0^1 \intT} 
\def \intTZ {\intT \int_0^1}
\def \intO {\int_\Omega}
\def \nb {\nabla}
\def \m {\mu}
\def \rd {\partial}
\def \ztil {\widetilde{z}}
\def \d {\delta}
\def \Otil {\widetilde{\Omega}}
\def \intOtil {\int_{\Otil}}
\def \Wbar {\overline{W}}
\def \Mtil {\widetilde{M}}
\def \ubar {\overline{u}}
\def \wtil {\widetilde{w}}
\def \wbar {\overline{w}}
\begin{document}
\bibliographystyle{acm}

\date{\today}

\subjclass[2020]{35L65, 35L67, 35B35, 35B40} \keywords{Contraction, Decay, Shock, Multi-D scalar viscous conservation law, The method of $a$-contraction with shifts}

\thanks{\textbf{Acknowledgment.}  This work was supported by Samsung Science and Technology Foundation under Project Number SSTF-BA2102-01.
}

\begin{abstract}
    We consider a planar viscous shock for a scalar viscous conservation law with a strictly convex flux in multi-dimensional setting, where the transversal direction is periodic.
    We first show the contraction property for any solutions evolving from a large bounded initial perturbation in $L^2$ of the viscous shock. The contraction holds up to a dynamical shift, and it is measured by a weighted relative entropy.  This result for the contraction extends the existing result in 1D \cite{Kang19} to the multi-dimensional case.
   As a consequence, if the large bounded initial $L^2$-perturbation is also in $L^1$, then the large perturbation decays of rate $t^{-1/4}$ in $L^2$, up to a dynamical shift that is uniformly bounded in time.
   This is the first result for the quantitative estimate converging to a planar shock under large perturbations. 
   
\end{abstract}

\maketitle \centerline{\date}

\tableofcontents

\section{Introduction}
\setcounter{equation}{0}
We consider the multi-dimensional scalar viscous conservation law with a Lipschitz flux $F:= (f_1, f_2, f_3)$:
\begin{align}
    \begin{aligned} \label{SVCL}
    &u_t + \div_x F(u) = \Delta_x u,\\
    &u(0,x_1,x') = u_0(x_1,x'),
    \end{aligned}
\end{align}
where $u= u(t,x) \in \RR$ is a conserved quantity defined on $t>0, x = (x_1, x')$ with $x_1 \in \RR, x' = (x_2, x_3)\in \mathbb{T}^2 := \bbr^{2} / \mathbb{Z}^{2}$ being $2$-dimensional flat torus. Although our results can be extended to the general $n$-dimensional case: $\mathbb{R} \times \mathbb{T}^{n-1}, n \geq 2$, we only focus on the 3D setting for brevity.  
So we use the simple notation $\Omega := \bbr \times \mathbb{T}^{2}$ in what follows.

For any strictly convex flux $f_1$, and any two constants $u_-, u_+$ with $u_- > u_+$, it is known that \eqref{SVCL} admits a viscous shock waves $ \util(x_1 - \sigma t)$ connecting the two end states $u_-$ and $u_+$. More precisely, there exists a viscous shock $\util$ as a smooth traveling wave solution to the following ODE:
\begin{align}
    \begin{aligned} \label{Vshock}
    &-\sigma \util' + f_1(\util)' = \util'', \quad ' := \frac{d}{d\xi},\\
    &\util(\xi) \to u_\pm, \, \, \text{as } \xi \to \pm \infty,
    \end{aligned}
\end{align}
where the shock speed $\s$ is determined by the Rankine-Hugoniot condition
\begin{equation}\label{RH}
    \sigma = \frac{f_1(u_-)-f_1(u_+)}{u_- - u_+}.
\end{equation}
Moreover, from \eqref{Vshock}-\eqref{RH} and the strict convexity of $f_1$, $\util$ satisfies $u_+ < \util < u_-$ and
\begin{align*}
    \tiu' &= f_1(\tiu) - f_1(u_-) - \sigma(\tiu - u_-)\\
    &= (u_- - \tiu)\left(\frac{f_1(u_-) - f_1(u_+)}{u_- - u_+} - \frac{f_1(u_-) - f_1(\tiu)}{u_- - \tiu} \right) <0.
\end{align*}

There have been many studies on the stability of viscous shock waves in multi-dimensional viscous conservation laws. Goodman \cite{G89} proved the stability of weak shocks using anti-derivative variables with a shift function depending on time and transverse spatial variables. Hoff and Zumbrun \cite{HZ00, HZ02} established the $L^p$-asymptotic stability of large shock waves through the pointwise semigroup method. However, these studies mainly focus on the small perturbations around shock waves. 

In this paper, we aim to show the time-decay estimates of decay rate $t^{-1/4}$ in $L^2(\Omega)$ for large perturbations around a viscous shock of small jump strength, provided that the initial perturbation belongs to $ L^1 \cap L^\infty(\Omega)$. 
These decay estimates can be achieved by the contraction properties of large perturbations in $L^1$ and $L^2$ respectively. 
 
As the existing result \cite {F-S} for the decay of large perturbations to the viscous scalar conservation law, Freist$\ddot{\mbox{u}}$hler and Serre proved the (qualitative) convergence of $L^1$-large perturbations towards a viscous shock in one dimension space. Whereas, our results provide the quantitative decay and contraction estimates for large perturbations in multi-D.
We refer to the $L^1$-contraction result by Kruzkhov \cite{K1}, based on a large family of entropies $\eta_k(u):=|u-k|, ~k\in\bbr$. We also refer to \cite{Bressan,LiuBook} for the fundamentals on $L^1$ theory of shock stability.

However, studying on the decay and contraction of any large perturbations of shocks in $L^2$-distance becomes an important cornerstone for the study on the physical viscous system of conservation laws, since many physical viscous systems of conservation laws (including Navier-Stokes) have only one nontrivial entropy.

\subsection{Main results}
To obtain the main results, we may choose a suitable entropy associated to the given flux, from which  we would control large values of the flux as follows.
First of all, we assume that the flux $f_1$ is strictly convex and satisfies the following exponential growth condition as in \cite{Kang19}:
\begin{equation}\label{growth}
    \text{there exists } a,b>0 \, \, \text{such that} \, \, |f(u)| \leq a e^{b|u|} \,\, \text{for all} \, \, u \in \mathbb{R}.
\end{equation}
Note that any convex function $\eta$ is an entropy of the inviscid scalar conservation law:
\[
u_t + \div_x F(u) = 0,
\]
since there exists an entropy flux $q$ as 
\[
q(u) = \left(\int^u \eta'(v)f_1(v)\,dv, \int^u \eta'(v)f_2(v)\,dv, \int^u \eta'(v)f_3(v)\,dv\right).
\]
However, for our results, we will consider an appropriate entropy $\eta$ associated with the flux $f_1$ that satisfies the following hypotheses:
\begin{enumerate}
    \item[(A1)] There exists a constant $\alpha$ such that $\eta''(u) \geq \alpha$, and $\eta''''(u) \geq \alpha$ for all $u \in \mathbb{R}$.
    \item[(A2)] For a given constant $\theta>0$, there exists a constant $C>0$ such that for any $u,v \in \mathbb{R}$ with $|v| \leq \theta$, the following inequalites hold:\vspace{0.1cm}\\
    (i) $|\eta'(u) - \eta'(v)|^2\mathbf{1}_{\{|u|\leq 2\theta\}} + |\eta'(u) - \eta'(v)|\mathbf{1}_{\{|u|> 2\theta\}} \leq C\eta(u|v)$ \vspace{0.15cm}\\
    (ii) $|f_1(u) - f_1(v)| \leq C|\eta'(u) - \eta'(v)|$ \vspace{0.15cm}\\
    (iii) $|\eta''(u) - \eta''(v)| \leq C|\eta'(u) - \eta'(v)|$ \vspace{0.15cm}\\
    (iv) $|\eta(u) - \eta(v)| \leq C |\eta'(u) - \eta'(v)|$ \vspace{0.15cm}\\
    (v) $\left|\int_v^u \eta''(w)f_1(w)\,dw \right| \leq C(|\eta'(u) - \eta'(v)|\mathbf{1}_{\{|u|\leq 2\theta\}} + |\eta'(u) - \eta'(v)|^2\mathbf{1}_{\{|u|> 2\theta\}})$.
\end{enumerate}
The proof for existence of entropies satisfying the above properties (A1)-(A2) can be found in \cite[Appendix A]{Kang19}. Indeed, the entropy
\[
\eta(u) := e^{bu} + e^{-bu} + u^4 + u^2
\]
satisfies (A1)-(A2), where $b>0$ is defined in \eqref{growth}. Under these assumptions, we consider the relative entropy $\eta(\cdot|\cdot)$ of $\eta$ defined by
\[
\eta(u|v) := \eta(u) - \eta(v) - \eta'(v)(u-v).
\]
Since the entropy functional $u \mapsto \eta(u)$ is strictly convex, the relative entropy is locally quadratic, i.e., 
\[
\text{for any compact set } K \subset \mathbb{R}, \quad \eta(u|v) \sim |u-v|^2, \quad \forall u,v \in K, 
\]
and 
\[
    \eta(u|v) \geq 0, \,\, \text{for any} \, \, u,v, \, \, \text{and} \, \, \eta(u|v) = 0 \iff u=v.
\]
We now state the main results. The first result gives the contraction property of large perturbations of a viscous shock of small strength.
\begin{theorem}\label{main}
    Consider \eqref{SVCL} with a strictly convex flux $f_1$ satisfying the growth condition \eqref{growth}. Let $\eta$ be an entropy which satisfies (A1)-(A2). For any $u_- \in \mathbb{R}$, there exists $\delta_0 \in (0,1)$ such that the following holds.
    
    For any $\e, \l >0$ with $\delta_0^{-1} \e < \l < \delta_0$, let $u_+ := u_- - \e$. Then, there exists a smooth monotone function $a:\mathbb{R} \to \mathbb{R}^+$ with $\lim\limits_{x \to \pm \infty}a(x) = 1 + a_\pm$ for some constants $a_-, a_+$ with $|a_+ - a_-| = \l$ such that the following holds.
    
    Let $\util$ be the viscous shock connecting $u_-$ and $u_+$. Moreover, let $u$ be a solution to \eqref{SVCL} with initial data $u_0$ satisfying $u_0 - \util \in L^2 \cap L^\infty(\Omega)$.  For any $T>0$, there exists an absolutely continuous shift $X(t)$ such that
    \begin{equation}\label{contraction}
        \intO a(x_1 -\s t) \eta(u(t,x_1+X(t),x')|\util(x_1 -\s t))\,dx \leq \intO a(x_1) \eta(u_0(x)|\util(x_1))\,dx, \, \, t \leq T.
    \end{equation}
    Here, the shift function $X(t)$ satisfies $X \in W_{loc}^{1,1}((0,T))$ and
    \begin{equation}\label{shiftestimate}
    |\dot{X}(t)| \leq \frac{1}{\e^2}(1+g(t)), \,\, t \leq T,
    \end{equation}
    for some positive function $g$ satisfying
    \[
    \| g \|_{L^1(0,T)} \leq \frac{2\l}{\delta_0 \e} \intO \eta(u_0|\util)\,dx.
    \]
    \end{theorem}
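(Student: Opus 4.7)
\medskip

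\noindent\textbf{Proof proposal.} The plan is to implement the method of $a$-contraction with shifts \cite{Kang19} in the multi-D framework, with the key new ingredient being the use of transversal (Poincar\'e) dissipation on $\mathbb{T}^2$ to absorb all genuinely multi-dimensional error terms. First I would pass to the moving frame by setting $\xi := x_1 - \sigma t - X(t)$ and $v(t,\xi,x') := u(t,\xi+\sigma t+X(t),x')$, so that $\tilde u(\xi)$ becomes stationary and the equation for $v$ has an extra drift $\dot X(t)\,\partial_\xi v$. The weight $a(\xi)$ is taken as a smooth monotone function with $a'\geq 0$ supported essentially on the shock layer, of total variation $|a_+-a_-|=\lambda$, chosen proportional (up to the correct sign) to $-\tilde u'/\varepsilon$ as in the 1D construction. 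The shift $X$ is defined by an ODE of the form
\begin{equation*}
\dot X(t) = -\frac{1}{M(\tilde u)}\int_{\Omega} a(\xi)\,\eta''(\tilde u)\,\tilde u'\,(v-\tilde u)\,dx + (\text{lower order}),
\end{equation*}
with $M(\tilde u)$ a normalization of order $\varepsilon$, so that the linear-in-$\dot X$ contribution in the time derivative of the weighted relative entropy is cancelled exactly.

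Next I would compute
\begin{equation*}
\frac{d}{dt}\int_\Omega a(\xi)\,\eta(v|\tilde u)\,dx = \dot X(t)\,Y(v) + \mathcal{B}(v) - \mathcal{G}(v),
\end{equation*}
where $\mathcal G(v)$ collects the good (non-negative) dissipation terms
\begin{equation*}
\mathcal G(v) = \int_\Omega a\,\eta''(\tilde u)\,|\partial_\xi v|^2\,dx + \int_\Omega a\,\eta''(\tilde u)\,|\nabla_{x'} v|^2\,dx + \int_\Omega (-a')\,|\eta'(v)-\eta'(\tilde u)|\,|\tilde u'|\,dx + \cdots,
\end{equation*}
and $\mathcal B(v)$ gathers the quadratic and cubic errors produced by the nonlinearity $f_1$, the weight $a'$, and the transport term $\dot X\,\partial_\xi v$. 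The choice of $\dot X$ forces $\dot X\cdot Y(v)$ to contribute only a quadratic-in-$(v-\tilde u)$ remainder, which is then compared to $\mathcal G$.

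The multi-D obstruction comes from any term involving $\nabla_{x'} v$: to handle these I would split $v = \bar v(t,\xi) + v^\perp(t,\xi,x')$ with $\bar v$ the $x'$-average. The perp part $v^\perp$ enjoys the Poincar\'e inequality on $\mathbb T^2$, so the transversal part of $\mathcal G$ dominates, up to a universal constant, the full $L^2$-norm of $v^\perp$ weighted by $a\,\eta''(\tilde u)$; consequently every error in $\mathcal B(v)$ that carries a factor of $v^\perp$ can be absorbed using Cauchy--Schwarz together with assumption (A2)(i)--(v) (the hypotheses are designed precisely to bound $|f_1(v)-f_1(\tilde u)|$ and $\int_{\tilde u}^v\eta''f_1$ by suitable powers of $|\eta'(v)-\eta'(\tilde u)|$, which in turn is controlled by the relative entropy via (A2)(i)). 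After this absorption, what remains is an integral depending only on $\bar v(t,\xi)$, which is exactly the 1D quantity studied in \cite{Kang19}; invoking the 1D pointwise/integral contraction inequality (a nonlinear Poincar\'e-type estimate for the shock layer, valid for small $\lambda,\varepsilon$ with $\varepsilon/\lambda \ll 1$) finishes $\mathcal B - \mathcal G \leq 0$, hence $\frac{d}{dt}\mathcal E \leq 0$, giving \eqref{contraction}.

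Finally, to obtain the shift bound \eqref{shiftestimate}, I would split $\dot X$ into a bounded piece, estimated by $1/\varepsilon^2$ using the uniform $L^\infty$ bound on $\tilde u$ and the normalization $M(\tilde u)\sim \varepsilon$, plus a remainder $g(t)$ whose time-integral is controlled by the total entropy dissipation $\int_0^T \mathcal D(t)\,dt$; since $\mathcal D$ is bounded by the initial entropy through the already established $\mathcal E(t)\leq \mathcal E(0)$, the factor $\lambda/(\delta_0\varepsilon)$ appears from the ratio of the weight jump to the shock strength, yielding $\|g\|_{L^1(0,T)}\leq \frac{2\lambda}{\delta_0\varepsilon}\int_\Omega\eta(u_0|\tilde u)\,dx$. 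The main obstacle I expect is the nonlinear contraction inequality for the mean part $\bar v$: in the large-perturbation regime the bad terms are cubic in $\eta'(\bar v)-\eta'(\tilde u)$, and matching them against the three good dissipation terms requires the delicate splitting $|v|\lessgtr 2\theta$ built into (A2), together with the sharp choice of $a'$ proportional to $|\tilde u'|$ that makes the shock-layer estimates borderline-critical in $\varepsilon$.
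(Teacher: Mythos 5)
Your overall plan — the method of $a$-contraction with shifts, the weight $a'$ proportional to $|\tilde u'|$, and using transversal dissipation on $\mathbb T^2$ — is in the right spirit, but two of your central ingredients are not the ones the paper uses, and in the large-perturbation regime the ones you propose do not go through.

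\textbf{The shift definition.} You define $\dot X$ by demanding exact cancellation of the \emph{linear}-in-$(v-\tilde u)$ piece of $Y$,
i.e.\ a formula of the form $\dot X \sim -\frac{1}{M(\tilde u)}\int a\,\eta''(\tilde u)\tilde u'(v-\tilde u)\,dx$.
That is appropriate for small perturbations, where $Y$ is dominated by this linear term. In the large-data setting the functional $Y$ also contains the genuinely quadratic piece $-\int a'\eta(u|\tilde u)\,dx$, which your shift does not cancel; moreover such a linear formula does not supply the negative quadratic damping that makes the bad terms absorbable. The paper instead defines $\dot X$ as a \emph{saturated, truncated} feedback $\dot X = \Phi_\e(Y(u^X))(2|B(u^X)|+1)$: when $|Y|\ge\e^2$ this manufactures a term $-2|B|$ that dominates $|B|$ outright, and when $|Y|\le\e^2$ it produces $-\frac{1}{\e^4}|Y|^2$, which is the input to the main Proposition~\ref{mainprop}. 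This is also precisely what gives the quantitative shift bound $|\dot X|\le\frac{1}{\e^2}(1+g)$ with $\|g\|_{L^1}\lesssim\frac{\l}{\delta_0\e}\int\eta(u_0|\tilde u)$; with your formula there is no obvious route to that estimate.

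\textbf{The multi-D reduction.} You split $v=\bar v+v^\perp$ (mean over $\mathbb T^2$ plus fluctuation) and propose to absorb all $v^\perp$ contributions via the $\mathbb{T}^2$-Poincar\'e inequality, reducing the remaining estimate to ``exactly the 1D quantity studied in \cite{Kang19}.'' This reduction fails for the nonlinear Poincar\'e-type functional at the heart of the argument. Inside the truncation the relevant functional (after the change of variable $\xi\mapsto z$) is
$R_{\e,\delta}(W)=-\frac1\delta\big(\int W^2+2\int W\big)^2+(1+\delta)\int W^2+\frac23\int W^3+\delta\int|W|^3
-(1-\delta)\int z(1-z)|W_z|^2-\frac{1-\delta}{\e^{3/2}}\sum_i\int|W_{x_i}|^2$,
and the quartic constraint term produces the cross contribution
$-\frac{2}{\delta}\big(\int\bar W^2+2\int\bar W\big)\int |W^\perp|^2$,
whose sign and size are uncontrolled when $\int\bar W^2+2\int\bar W$ is negative and $O(1)$ (which happens: $\int W^2\le M$ only gives an $\e$-independent bound). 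Similarly the cubic term $\int W^3$ does not separate cleanly into ``1D in $\bar W$'' plus ``controlled by $\nabla_{x'}W^\perp$.'' What the paper does instead is prove a genuinely multi-dimensional nonlinear Poincar\'e inequality (Proposition~\ref{multiKV}) on $[0,1]\times\mathbb T^2$ under the two constraints $\|W\|_{L^\infty}\le 1/\e$ and $\int W^2\le M$; its proof requires the new \emph{pointwise} deviation estimate of Lemma~\ref{KV pointwise}, sharp in the propagation direction, so that $\int|W-\overline W|^3$ can be split into a $z$-direction piece (estimated against $\int z(1-z)|W_z|^2$) and a transversal piece (estimated against $\e^{-3/2}\sum_i\int|W_{x_i}|^2$, via the Jacobian scale $\e^{-1}$ from Lemma~\ref{diffusion}). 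None of this is captured by a naive mean/fluctuation split.

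\textbf{What you are also not seeing.} The $\theta$-dependent splitting in (A2) is not merely for $\bar v$: the paper truncates the perturbation at level $\d_1$ in $|\eta'(u)-\eta'(\tilde u)|$, proves one proposition (\ref{inside}) inside the truncation using the multi-D Poincar\'e inequality, and a separate one (\ref{outside}) outside it. The outside region has its own multi-D obstruction: the worst bad term is $\int a'\big(\int_{\mathbb T^2}\bar w\,dx'\big)^2$, which is handled by a $\mathbb T^2$-Poincar\'e inequality plus a reference-slice estimate for $\int_{\mathbb T^2}|\bar w|(\xi_0)\,dx'$ obtained from the smallness of the localized energy. Your proposal does not engage with any of this.

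In short: the scheme you describe is the small-perturbation version of $a$-contraction, and the multi-D reduction to a 1D inequality for the mean is false because of cross terms in the quartic and cubic functionals. The paper's new contributions are exactly the truncated shift ODE, the multi-D nonlinear Poincar\'e inequality with constraints (its new pointwise lemma), and the reference-slice argument outside the truncation.
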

    \begin{remark}
        By the maximum principle of the scalar viscous conservation law, the relative entropy is comparable with $L^2$ distance. Therefore, \eqref{contraction} implies that
        \begin{equation}\label{L2 type contraction}
            \intO |u(t,x_1+X(t),x') - \util(\cdot - \s t)|^2\,dx \leq C \intO |u_0(x) - \util(x)|^2\,dx
        \end{equation}
        where the constant $C$ depends on $\eta, \delta_0$, and $\|u_0 \|_{L^{\infty}(\Omega)}$.
        \end{remark}
    \begin{remark}
        As in \cite{Kang19}, it suffices to prove Theroem \ref{main} when the shock speed $\s$ is positive(i.e., $\s > 0$). Indeed, if $\s < 0$ or $\s = 0$, then we consider \eqref{SVCL} with a strictly convex flux $g(u) = -2\s u + f(u)$ or $g(u) = \s u + f(u)$ instead of the original flux $f_1$. Therefore, we will assume $\s > 0$ from now on.
        \end{remark}
The following result provides the decay estimate of rate $t^{-1/4}$ in $L^2$  of large perturbations, together with the $L^\infty$-bound of shift function, provided that the initial perturbation belongs to $ L^1 \cap L^\infty(\Omega)$.
\begin{theorem}\label{timedecay} Under the same assumption as in Theorem \ref{main}, suppose that the initial perturbation satisfies $u_0 - \util \in L^1 \cap L^\infty(\Omega)$. Then, there exists a constant $C=C(u_0)>0$ such that 
\begin{equation}\label{time estimate}
    \|u(t,\cdot + X(t), x')  - \util\|_{L^2(\Omega)} \le \frac{C}{1+t^{1/4}}, \quad \forall t>0.
\end{equation}
Moreover, the shift function $X(t)$ satisfies
\begin{equation}\label{L inf shift}
    \|X\|_{L^\infty(\RR_+)} \leq C = C(\|u_0\|_{L^\infty(\mathbb{R})}, \e).
\end{equation}
\end{theorem}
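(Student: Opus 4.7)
My plan is to prove Theorem~\ref{timedecay} by combining the $L^2$-contraction \eqref{contraction} of Theorem~\ref{main} with a uniform-in-time $L^1$-bound on the perturbation obtained from Kruzhkov's $L^1$-contraction, and feeding the result into a Nash-type inequality tailored to the product structure of $\Omega=\RR\times\TT^2$. The $L^\infty$-bound on the shift will then be extracted from the resulting decay, together with a mass-conservation identity and the crude local bound \eqref{shiftestimate}.

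For the $L^1$-step, since both $u$ and the traveling wave $\util(\cdot-\s t)$ are smooth solutions of \eqref{SVCL}, the classical parabolic $L^1$-contraction yields
\[
\|u(t,\cdot)-\util(\cdot-\s t)\|_{L^1(\Omega)}\le \|u_0-\util\|_{L^1(\Omega)}=:M_1,\qquad t\ge 0.
\]
From the proof of Theorem~\ref{main} I would also extract the refined dissipative inequality $\mathcal{E}'(t)+c\,\mathcal{D}(t)\le 0$, where
\[
\mathcal{E}(t):=\int_\Omega a(x_1-\s t)\,\eta\bigl(u(t,x_1+X(t),x')\,\big|\,\util(x_1-\s t)\bigr)\,dx,
\]
and $\mathcal{D}(t)\gtrsim\int_\Omega|\nabla_x(u(t,\cdot+X(t),x')-\util(\cdot-\s t))|^2\,dx$ after absorbing the small $O(\e\l)$ error terms of the weighted relative-entropy analysis.

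For the decay rate, I would derive a Nash-type interpolation on $\Omega$ by Fourier-decomposing $f:=u(\cdot+X(t),x')-\util(\cdot-\s t)$ in the transverse variable as $f=\bar f(x_1)+f^\perp$ with $\bar f(x_1):=|\TT^2|^{-1}\int_{\TT^2}f(x_1,x')\,dx'$. The classical one-dimensional Nash inequality gives $\|\bar f\|_{L^2(\RR)}^6\le C\|\bar f\|_{L^1(\RR)}^4\|\partial_{x_1}\bar f\|_{L^2(\RR)}^2$, while Poincar\'e on $\TT^2$ controls $\|f^\perp\|_{L^2(\Omega)}^2\le C\|\nabla_{x'}f\|_{L^2(\Omega)}^2$. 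Using $\|\bar f\|_{L^1(\RR)}\lesssim M_1$, $\|\partial_{x_1}\bar f\|_{L^2(\RR)}^2\lesssim \mathcal{D}(t)$ and the local quadratic behavior $\eta(u|\util)\sim|u-\util|^2$ (valid by the maximum principle), these combine to $\mathcal{E}(t)^3\le CM_1^4\,\mathcal{D}(t)$ in the long-time regime. Inserting $\mathcal{D}\ge -c^{-1}\mathcal{E}'$ and integrating the resulting ODE $\mathcal{E}'\le -c'M_1^{-4}\mathcal{E}^3$ gives $\mathcal{E}(t)\le C(1+t)^{-1/2}$, whence $\|u(\cdot+X(t),x')-\util\|_{L^2(\Omega)}\le C(1+t)^{-1/4}$, yielding \eqref{time estimate}.

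The main obstacle is the uniform bound \eqref{L inf shift}. The a priori estimate \eqref{shiftestimate} alone only yields $|X(t)|=O(t/\e^2)$, so a separate argument is required. I would combine \eqref{time estimate} with the conservation identity
\[
\int_\Omega\bigl(u(t,x)-\util(x_1-\s t)\bigr)\,dx=\int_\Omega(u_0-\util)\,dx,
\]
a consequence of \eqref{RH} and integration by parts in $x_1$, together with a refinement of the shift ODE in the small-perturbation regime. Once $\|u(\cdot+X,x')-\util\|_{L^2}$ is small, one expects $|\dot X(t)|$ to become quadratically small in this perturbation, forcing $X(t)$ to approach the finite asymptotic value $X_\infty=(\e|\TT^2|)^{-1}\int_\Omega(u_0-\util)\,dx$. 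Combined with the crude linear bound on compact time intervals, this would give $\|X\|_{L^\infty(\RR_+)}<\infty$. Closing this bootstrap quantitatively — in particular producing an integrable-in-time bound on $|\dot X|$ for large $t$, which \eqref{shiftestimate} does not — is the most delicate technical step of the argument.
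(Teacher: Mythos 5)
Your proposal is close in outline to the paper's argument but contains a structural circularity that the paper avoids, and it leaves the hardest step---the $L^\infty$ bound \eqref{L inf shift}---openly incomplete. The paper establishes the shift bound \emph{first} and then uses it to supply the $L^1$ input for the decay estimate; you attempt the reverse order, which does not close.

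Concretely: your Nash/Poincar\'e step uses $\|\bar f\|_{L^1(\RR)}\lesssim M_1$, where $f=u(\cdot+X(t),x')-\util(\cdot-\sigma t)$. But Kruzhkov's $L^1$-contraction only bounds $\|u(t,\cdot)-\util(\cdot-\sigma t)\|_{L^1}$; after translating the solution by $X(t)$ one picks up an extra term of size $|X(t)|\,(u_- - u_+)$ from $\|\util^X-\util\|_{L^1}$. So the $L^1$ bound you need already presupposes $\|X\|_{L^\infty}<\infty$. You then try to extract exactly this shift bound from the decay, which is circular, and you yourself flag that producing an integrable-in-time bound on $|\dot X|$ from the ODE $\dot X=\Phi_\e(Y)(2|B|+1)$ is the ``most delicate'' open step. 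The crude bound \eqref{shiftestimate} cannot help: $\Phi_\e$ saturates at $\e^{-2}$ whenever $|Y|\ge\e^2$, so nothing forces $|\dot X|$ to decay without first knowing $Y\to 0$, which again needs the decay of the perturbation.

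The paper resolves this directly, without any decay-in-time input, by a comparison argument based on the geometry of the shock profile. Since $\util$ is strictly monotone with distinct limits at $\pm\infty$, the quantity $\mathcal{Q}(s)=\int_\RR|\util(\cdot+s)-\util|^2\,dx_1$ is an even function that grows at least linearly in $|s|$ for $|s|\ge 1$. Writing $\util^X-\util$ as $(u^X-\util)-(u^X-\util^X)$ and squaring, one gets
\[
|X(t)|\lesssim \|u^X-\util(\cdot-\sigma t)\|_{L^2(\Omega)}^2 + \|\util\|_\infty\,\|u^X-\util^X(\cdot-\sigma t)\|_{L^1(\Omega)}+1,
\]
and both terms on the right are bounded uniformly in $t$ by the initial data, the first via the $L^2$-type contraction \eqref{L2 type contraction} and the second via the $L^1$-contraction applied to $u$ and $\util(\cdot-\sigma t)$ (which involves no shift). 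This yields \eqref{L inf shift} with no bootstrap. Only afterward does the paper apply the Gagliardo--Nirenberg inequality of Huang--Yuan on $\RR\times\TT^2$ to obtain the ODE $\mathcal{E}'\lesssim -\mathcal{E}^3$ and the $t^{-1/4}$ rate. Your alternative interpolation route---decomposing $f=\bar f+f^\perp$, applying the 1D Nash inequality to $\bar f$ and Poincar\'e on $\TT^2$ to $f^\perp$---is a reasonable substitute for the Huang--Yuan lemma (one must be a bit careful to absorb $\|f^\perp\|_{L^2}^6\lesssim\mathcal{D}^3$, but using boundedness of $\mathcal{E}$ this does go through). However, it does not repair the circularity in the shift bound, which is where the argument currently fails.
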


The uniform bound of shift and the time decay rate of the solution can be proved by using Theorem \ref{main} and $L^1$-contraction property of the scalar viscous conservation law \eqref{SVCL}. These proofs are similar to those in \cite{KO24}. Therefore, we will present the proof of Theorem \ref{timedecay} in Appendix \ref{appendix B}.

\begin{remark}
    Even though the Theorem \ref{main} is stated for the domain $\RR \times \mathbb{T}^2$, it can be easily extended to the general $n$-dimensional case: $\mathbb{R} \times \mathbb{T}^{n-1}, n \geq 2$. Moreover, we can show the same time decay rate of $t^{-1/4}$ for the domain $\mathbb{R} \times \mathbb{T}^{n-1}, n \geq 2$.
\end{remark}

\subsection{Main ideas of proof}
To prove the main results, we basically use the method of \(a\)-contraction with shifts, that was developed in \cite{KV16,Vasseur16} for the study of stability of Riemann shock, and extended to the studies for contraction property or long-time dynamics of large perturbations of viscous shock to viscous conservation laws as in  \cite{Kang19,Kang-V-1,KV21,KV-Inven,KVW}.

We here present the new ideas of our proof that distinguish it from the proof in \cite{Kang19} for the 1D case.
To apply the method of $a$-contraction with shifts to large perturbations around shocks, we divide the perturbations into two regions near the shock wave and far from the shock wave.
 For that, we truncate the perturbation $|\eta'(u) - \eta'(\util)|$ with a small constant $\d_1>0$, where the truncation size $\d_1$ is determined by the upper bound of the localized energy as in Lemma \ref{energysmall}. 

To handle the perturbations inside the truncation where $|\eta'(u) - \eta'(\util)|\leq \d_1 $ in the proof of Proposition \ref{inside}, we perform Taylor expansions at $\util$, and apply the multi-dimensional nonlinear Poincar\'e-type inequality of Proposition \ref{multiKV}. In the proof of Proposition \ref{multiKV}, compared to the one-dimensional setting based on the 1D nonlinear Poincar\'e type inequality as in \cite{KV21}, a new difficulty arises in controlling the cubic term $\int_{[0,1] \times \mathbb{T}^2} W^3$ of \eqref{poincare terms}. \\
To control the cubic term of deviation  $\int_{[0,1] \times \mathbb{T}^2} |W - \int_{[0,1] \times \mathbb{T}^2} W|^3$ by the diffusion as in \eqref{w3est}, we need the pointwise estimates for  $|W - \int_{[0,1] \times \mathbb{T}^2} W|$ as in Lemma \ref{KV pointwise} that is sharp along the propagation direction, from which the deviation $|W - \int_{\mathbb{T}^2} W|$ on the transversal domain $\mathbb{T}^2$ would be controlled by the diffusion on $\mathbb{T}^2$ as in \eqref{j2est}. For that, we allow the upper bound of $|W|$ by $1/\eps$ (that is the constraint of Proposition \ref{multiKV} where the scale $\eps$ will be given by the shock strength in \eqref{W bound}), which, via the Poincar\'e inequality on $\mathbb{T}^2$, can be controlled by the diffusion $D_2$ with the coefficient scale $1/\eps^2$ coming from the Jacobian $d\xi/dz$ as in \eqref{d2est}. 

For large values of $u$ such that $|\eta'(u) - \eta'(\util)|\geq \d_1$, we show that the all terms are negligibly smaller than the good terms $G_1, D$ in \eqref{BI}.
In particular, in controlling the worst bad term in \eqref{worstbad}, we need to control the squared average of perturbation over $\mathbb{T}^2$ localized by $a'$, which is obtained after applying the Poincar\'e inequality on $\mathbb{T}^2$. To this end, we use the pointwise estimate \eqref{pwout} with the smallness of energy at a reference point $\xi_0$ due to \eqref{reference point estimate}.

To this end, we obtain the pointwise estimate of $\intT (\eta'(u) - \eta'(\util))\,dx'$ and apply Poincar\'e inequality to control the worst bad term. Finally, we combine Proposition \ref{inside} and Proposition \ref{outside} to complete the proof of Theorem \ref{main}.

To prove Theorem \ref{timedecay}, we keep the diffusion term $D$ from the contraction estimate \eqref{cont conclusion}, and apply Gagliardo Nirenberg type interpolation together with the $L^1$-contraction estimate, to get the $L^2$-decay estimate.

\subsection{Organization of paper}
	The paper is organized as follows. In Section \ref{sec:2}, we present a multi-dimensional nonlinear Poincar\'e type inequality with constraints, that provides a key estimate inside truncation for perturbation. In Section 3, we define shifts and complete the proof of the first theorem, based on the main proposition which will be proved in the following section.

\section{Preliminaries} \label{sec:2}
\setcounter{equation}{0}
In this section, we  provide the properties of the viscous shock waves $\util$ with small amplitude, useful inequalities for the relative entropy, and the time evolution of the relative entropy. Additionally, we present a multi-dimensional nonlinear Poincar\'e type inequality, which extends the one-dimensional case in \cite[Proposition 3.3]{KV21}.
\subsection{Properties of weak shock}
In this subsection, we provide the properties of weak shock waves $\util$, which are useful in stability analysis. 

Without loss of generality, we may assume the viscous shock $\util$ satisfies $\util(0)= \frac{u_- + u_+}{2}$.
\begin{lemma}\label{shock property}\cite[Lemma 2.1]{Kang19}
For any $u_- \in \RR$ with $f_1''(u_-)>0$, there exists $\e_0>0$ such that for any $0<\e<\e_0$, the following is true.

Let $\util$ be the viscous shock wave connecting end states $u_-$ and $u_+:= u_- - \e$ such that $\util(0) =  \frac{u_- + u_+}{2}$. Then, there exists constants $C, C_1, C_2>0$ such that 
\begin{equation}\label{shock der exp decay}
    -C^{-1}\e^2 e^{-C_1 \e |\xi|} \leq \util'(\xi) \leq -C\e^2 e^{-C_2 \e |\xi|}, \, \, \forall \xi \in \mathbb{R}.
\end{equation}
As a consequence, we have
\[
\inf\limits_{[-\frac{1}{\e}, \frac{1}{\e}]} |\util'| \geq C\e^2.
\]
Moreover,
\[
|\util''(\xi)| \leq C\e |\util'(\xi)|, \, \, \forall \xi \in \mathbb{R}.
\]
\end{lemma}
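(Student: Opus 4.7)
The plan is to exploit the fact that the profile ODE, once integrated, reduces to a perturbed logistic equation whose decay can be read off an explicit formula. Starting from \eqref{Vshock}, integrating once and using the boundary conditions at $\pm\infty$ together with \eqref{RH} yields
\[
\util'(\xi) \;=\; f_1(\util) - f_1(u_-) - \sigma(\util - u_-).
\]
The computation already displayed in the introduction factors this right-hand side as
\[
\util' \;=\; -(u_- - \util)(\util - u_+)\, G(\util),
\]
where $G$ is smooth on $[u_+, u_-]$ with $G(v) = \tfrac{1}{2} f_1''(u_-) + O(\e)$ uniformly (this comes from Taylor-expanding the two divided differences $[f_1(u_-)-f_1(v)]/(u_- - v)$ and $\sigma$ about $u_-$). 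The hypothesis $f_1''(u_-) > 0$ together with continuity of $f_1''$ then forces $G > 0$ and hence $\util' < 0$ on all of $\mathbb{R}$, provided $\e < \e_0$ is small enough.

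Next I would normalize the profile. Set $w(\xi) := (\util(\xi) - u_+)/\e \in (0,1)$, $\beta := \tfrac{1}{2} f_1''(u_-)$, and rescale $s := \beta \e \xi$. Since $(u_- - \util)(\util - u_+) = \e^2 w(1-w)$, the ODE becomes
\[
\frac{dw}{ds} \;=\; -\,w(1-w)\,\bigl(1 + h(w;\e)\bigr), \qquad |h(\cdot;\e)| \leq C\e,
\]
with initial condition $w(0) = 1/2$ inherited from $\util(0) = (u_- + u_+)/2$. The unperturbed logistic problem $dw_*/ds = -w_*(1-w_*)$, $w_*(0)=1/2$, admits the explicit solution $w_*(s) = (1+e^s)^{-1}$, whose derivative enjoys the matching two-sided bound
\[
\tfrac{1}{4}\, e^{-|s|} \;\leq\; |w_*'(s)| \;=\; \frac{e^s}{(1+e^s)^2} \;\leq\; e^{-|s|}.
\]
A sub/supersolution comparison against two pure logistic profiles with rates $(1 \pm C\e)$ transfers these bounds to $w$, losing only constant factors. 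Pulling back via $\util'(\xi) = \e \beta\, w'(s)$ and $|s| = \beta \e |\xi|$ produces \eqref{shock der exp decay} with $C_1, C_2 = \beta\bigl(1 + O(\e)\bigr)$.

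The two corollaries then follow cheaply. For the infimum bound, on $|\xi| \leq 1/\e$ the exponent $C_1 \e |\xi|$ is bounded by $C_1$, so $e^{-C_1 \e |\xi|} \geq e^{-C_1}$ and \eqref{shock der exp decay} gives the claimed lower bound by $C\e^2$. For the $\util''$ estimate, differentiating $\util' = f_1(\util) - f_1(u_-) - \sigma(\util - u_-)$ directly yields $\util'' = (f_1'(\util) - \sigma)\util'$; since $\sigma = f_1'(u_-) + O(\e)$ (Taylor-expand the finite difference) and $f_1'(\util) = f_1'(u_-) + O(\e)$ uniformly in $\xi$ (because $\util \in [u_+, u_-]$ has length $\e$), we conclude $|\util''| \leq C \e\, |\util'|$. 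The main obstacle I anticipate is the comparison step for the perturbed logistic ODE: the equation cannot be solved in closed form once $h \not\equiv 0$, and I would handle this rigorously by explicit sub/supersolutions built from pure logistic profiles with rates $\beta(1\pm C\e)$, rather than a bare Gronwall argument, so as to retain the sharp exponential shape on both sides.
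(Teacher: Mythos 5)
Your argument is correct. The integration of \eqref{Vshock} to the first-order ODE, the factorization $\util' = -(u_-\!-\!\util)(\util\!-\!u_+)\,G(\util)$ with $G(\cdot) = \tfrac12 f_1''(u_-)+O(\e)$ (a second divided difference of $f_1$), the rescaling to a perturbed logistic equation, and the sub/supersolution comparison against logistic profiles with rates $(1\pm C\e)$ all check out and yield the two-sided exponential bound \eqref{shock der exp decay} with $C_1,C_2 = \tfrac12 f_1''(u_-)(1+O(\e))$. The two corollaries follow as you say; the identity $\util'' = (f_1'(\util)-\sigma)\util'$ together with $|f_1'(\util)-\sigma|\le C\e$ (both quantities being $f_1'(u_-)+O(\e)$) gives $|\util''|\le C\e|\util'|$. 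One small bookkeeping slip: in the infimum bound you invoke $C_1$, the constant appearing in the \emph{lower} bound of $\util'$, whereas the constant that actually enters the estimate $|\util'|\ge C\e^2 e^{-C_2\e|\xi|}\ge C\e^2 e^{-C_2}$ is $C_2$ from the \emph{upper} bound on $\util'$; since both are $\tfrac12 f_1''(u_-)(1+O(\e))$, this does not affect the conclusion. The present paper cites \cite[Lemma~2.1]{Kang19} without reproducing a proof, but your phase-plane/logistic-comparison route is the standard one and matches the spirit of the cited argument.
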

\subsection{Relative entropy method}
To establish the contraction estimate \eqref{contraction}, we use the so-called a-contraction with shift method, which is based on the relative entropy method.

For simplicity, we use the change of variable $(t,x_1,x') \mapsto (t, \xi = x_1 - \s t, x')$ and rewrite \eqref{SVCL} into:
\begin{equation}\label{SVCL xi}
    u_t - \s u_\xi + \div F(u) = \Delta u,
\end{equation}
where the operators $\div$ and $\Delta$ are on $(\xi, x')$ variables.

To utilize the relative entropy method, we first rewrite the viscous term of \eqref{SVCL xi} into
\begin{equation}\label{SVCL G}
    u_t - \div G(u) = \nb \cdot \big(\m(u) \nb(\eta'(u))\big),
\end{equation}
where $G(u)$ and $\m(u)$ are defined by
\[
    G(u) := \big(-\s u + f_1(u), f_2(u), f_3(u) \big) \quad \text{and} \quad \m(u):=\frac{1}{\eta''(u)}.
\]
In addition, \eqref{Vshock} can be written into
\begin{equation}\label{Vshock xi}
    -\s \util_\xi + f_1(\util)_\xi = \nb \cdot \big(\m(\util) \nb(\eta'(\util))\big).
\end{equation}
We will use the notation $\mathcal{F}(\cdot|\cdot)$ to denote the relative functional of $\mathcal{F}$(such as the relative entropy in the case that $\mathcal{F} = \eta$),that is,
\[
\mathcal{F}(u|v) := \mathcal{F}(u) - \mathcal{F}(v) - \mathcal{F}'(v)(u-v).
\]
Let $q(\cdot;\cdot) = \big(q_1(\cdot;\cdot), q_2(\cdot;\cdot),q_3(\cdot;\cdot)\big)$ be the flux of the relative entropy defined by 
\[
q_i(u;v) := q_i(u) - q_i(v) - \eta'(v)(f_i(u) - f_i(v)), \quad i=1,2,3,
\]
where $q = (q_1, q_2, q_3)$ is the entropy flux, i.e., $q_i' = \eta' f_i'$.

For convenience, we introduce the following notations: for any function $f: \mathbb{R}^+ \times \Omega \to  \mathbb{R} $ and the shift $X(t)$, we denote
\[
f^{\pm X} (t, \xi, x') := f(t,\xi \pm X(t), x').
\]
Moreover, we introduce the function space
\[
\mathcal{S} := \{u: u \in L^\infty(\Omega), \quad 
\nb(\eta'(u) - \eta'(\util)) \in L^2(\Omega)\},
\]
on which the below functionals $Y, B, G$ in \eqref{XYBG} are well-defined.

Under the assumption in Proposition \ref{main}, we claim that $u \in \mathcal{S}$. First, it is known that if the initial data $u_0$ satisfies $u_0 \in L^\infty(\Omega)$ and $u_0 - \util \in L^2(\Omega)$, then \eqref{SVCL} admits a bounded weak solution $u$ such that $\|u\|_{L^\infty(\Omega)} \leq \|u_0\|_{L^\infty(\Omega)}$, and 
\begin{equation}\label{energy gronwall}
    \intO |u-\util|^2\,d\xi\,dx' + \int_0^t \intO |\nb (u-\util)|^2\,d\xi\,dx'\,ds \leq e^{Ct} \intO |(u_0 - \util)|^2\,d\xi\,dx', \quad t \leq T.
\end{equation}
Indeed, the estimate \eqref{energy gronwall} can be obtained by the energy method as follows:

Since $u$ and $\util$ are solutions to \eqref{SVCL G}, we have
\begin{align*}
    &\frac{d}{dt} \frac{1}{2} \intO |u -\util|^2\,d\xi\,dx' + \intO |\nb (u-\util)|^2\,d\xi\,dx' = \intO (G(u) - G(\util)) \cdot \nb (u-\util)\,d\xi\,dx'\\
    &\qquad \leq \frac{1}{2}\intO |\nb (u-\util)|^2\,d\xi\,dx' + \frac{1}{2} \intO |G(u) - G(\util)|^2\,d\xi\,dx'\\
    &\qquad \leq \frac{1}{2} \intO |\nb (u-\util)|^2\,d\xi\,dx' +C \intO |u-\util|^2\,d\xi\,dx'.
\end{align*}
Then, the standard Gronwall inequality implies \eqref{energy gronwall}.

Therefore, from $\util' \in L^2(\Omega)$ with the maximum principle $\|u\|_{L^\infty(\Omega)} \leq \|u_0\|_{L^\infty(\Omega)}$, we obtain
\[
\nb(\eta'(u) - \eta'(\util)) = \eta(u) \nb (u-\util) + (\eta''(u) - \eta''(\util))\util' \in L^2((0,T) \times \Omega).
\]
Thus, $u$ satisfies $u \in \mathcal{S}$ for a.e. $t \leq T$.
\begin{lemma}\label{XYBG}
    Let $a:\mathbb{R} \to \mathbb{R}^+$ be a smooth bounded function such that $a'$ is bounded. Let $X:\mathbb{R} \to \mathbb{R}$ be an absolutely continuous function. Let $\util$ be the viscous shock. For any solution $u \in \mathcal{S}$ to \eqref{SVCL xi}, we have
    \[
    \frac{d}{dt} \intO \eta(u^X|\util) \,d\xi\,dx' = \dot{X}Y(u^X) + B(u^X) - G(u^X),
    \]
    where
    \begin{align*}
        Y(u) &= -\intO a \eta(u|\util)\,d\xi\,dx' + \intO a \partial_\xi \eta'(\util)(u-\util) \,d\xi\,dx',\\
        B(u) &= \intO a'\Big[F(u|\util) (\eta'(u) - \eta'(\util))(f_1(u)- f_1(\util)) + f_1(\util) (\eta')(u|\util)\Big] \,d\xi\,dx'\\
        & \quad -\intO a \eta''(\util)\util' f_1(u|\util) \,d\xi\,dx' - \intO a'\mu(u)(\eta'(u) - \eta'(\util))\partial_\xi(\eta'(u) - \eta'(\util)) \,d\xi\,dx'\\
        & \quad - \intO a' (\eta'(u) - \eta(\util))(\mu(u) - \mu(\util)) \eta''(\util)\util' \,d\xi\,dx'\\
        & \quad  - \intO a \partial_\xi(\eta'(u) - \eta(\util))(\mu(u) - \mu(\util)) \eta''(\util)\util' \,d\xi\,dx' + \intO a \util'' (\eta')(u|\util) \,d\xi\,dx',\\
        G(u) &= \sigma \intO a' \eta(u|\util)\,d\xi\,dx' + \intO a \mu(u)|\nabla(\eta'(u) - \eta'(\util))|^2 \,d\xi\,dx',\\
        F(u) &= - \int^u \eta''(v)f_1(v)\,dv.
    \end{align*}
\end{lemma}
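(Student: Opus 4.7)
The identity is obtained by differentiating the weighted relative entropy in time and substituting the evolution equation \eqref{SVCL G} for $u$ and the shock profile equation \eqref{Vshock xi} for $\util$. The plan is to treat the shift contribution, the convective/flux terms, and the viscous terms separately.

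For the shift contribution, I would write $\partial_t u^X = (\partial_t u)^X + \dot X\,\partial_\xi u^X$; since $\util$ is stationary in the shifted frame, this gives
\[
\frac{d}{dt}\intO a\,\eta(u^X|\util)\,d\xi\,dx' = \intO a\bigl(\eta'(u^X)-\eta'(\util)\bigr)\bigl[(\partial_t u)^X + \dot X\,\partial_\xi u^X\bigr]\,d\xi\,dx'.
\]
Direct expansion yields the pointwise identity $(\eta'(u^X)-\eta'(\util))\,\partial_\xi u^X = \partial_\xi\eta(u^X|\util) + \partial_\xi\eta'(\util)(u^X-\util)$; integrating the first term by parts in $\xi$ then produces exactly $\dot X\,Y(u^X)$.

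For the evolution part, I would substitute $(\partial_t u)^X = -\mathrm{div}\,G(u^X) + \nb\cdot(\mu(u^X)\nb\eta'(u^X))$ from \eqref{SVCL G}, test against $a(\eta'(u^X)-\eta'(\util))$, and subtract the identity obtained by testing \eqref{Vshock xi} against the same multiplier. This standard relative-entropy cancellation collapses all zeroth-order pieces into relative quantities such as $F(u|\util)$, $\eta'(u|\util)$, $f_1(u|\util)$. Since $a=a(\xi)$, integration by parts generates $a'$-terms solely from the first component of $G$, while the transverse fluxes $f_2, f_3$ contribute only bulk $a$-terms whose $x'$-boundary contributions vanish by periodicity. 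This produces the first line of $B(u^X)$, the cross-term $-\intO a\,\eta''(\util)\util' f_1(u^X|\util)$ coming from $\partial_\xi f_1(\util)$, and the convective piece $\sigma\intO a'\eta(u^X|\util)$ inside $G(u^X)$. For the viscous contributions, expanding $a(\eta'(u^X)-\eta'(\util))\,\nb\cdot(\mu(u^X)\nb\eta'(u^X))$ by IBP in $(\xi,x')$ and subtracting the viscous part of \eqref{Vshock xi} yields the good dissipation $-\intO a\,\mu(u^X)|\nb(\eta'(u^X)-\eta'(\util))|^2$ together with the three remaining viscous bad terms of $B(u)$ involving $\mu(u^X)-\mu(\util)$ and $\eta''(\util)\util'$. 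A final IBP in $\xi$ on the shock equation produces the last term $\intO a\,\util''\,\eta'(u^X|\util)$. Boundary integrals at $|\xi|\to\infty$ vanish because $u-\util\in L^2(\Omega)$ and $\util'$ decays exponentially by Lemma \ref{shock property}.

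The main obstacle is pure algebraic bookkeeping: one must track every sign carefully, separate bulk $a$-terms from $a'$-terms (the latter arising only from $\xi$-IBP), and match each flux/viscous contribution to its correct place in $B$ or $G$ without spurious leftovers. The multi-dimensional aspect enters only through the transverse divergence structure, which produces no new $a'$-terms since $a$ is independent of $x'$; the remainder is a direct extension of the 1D computation in \cite{Kang19}.
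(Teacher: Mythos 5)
Your proof is correct and follows essentially the same route as the paper's (omitted) computation, which is carried out by shifting $\util$ and $a$ backward by $X(t)$ and then changing variables back; you instead shift $u$ forward by $X(t)$ so that $\util$ and $a$ are time-independent, which is an equivalent parameterization and leads to the same bookkeeping. Two minor remarks: the statement of $Y$ in the lemma as printed has a typo (the first integrand should carry $a'$, consistent with \eqref{BI} and with what your IBP produces), and your sign $(\partial_t u)^X = -\div G(u^X)+\nb\cdot(\mu(u^X)\nb\eta'(u^X))$ is in fact the correct one given $G=(-\sigma u+f_1,f_2,f_3)$ and \eqref{SVCL xi}, even though \eqref{SVCL G} as written carries the opposite sign.
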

\begin{proof}
    Since the proof is essentially the same as in \cite[Lemma 2.2]{Kang19}, we omit the proof.

\end{proof}

\subsection{Construction of the weight function}
We now define the weight function $a$ that satisfies $\s a'>0$ so that the term
\[
\s \intO a' \eta(u^X|\util)\,d\xi\,dx'
\]
is nonnegative. 

First, recall that $\s > 0$. Define the weight function $a=a(\xi)$ by
\begin{equation}\label{def weight}
    a(\xi) := 1 + \l \frac{u_- - \util(\xi)}{\e}.
\end{equation}
Then, the weight function $a$ satisfies 
\[
\lim_{\xi \to \pm \infty} a(\xi) = a_\pm,
\]
and $a' = -\frac{\l}{\e}\util'>0$, so that $\s a' > 0$. Therefore, the functional $G$ in Lemma \ref{XYBG} consists of two good terms.

\subsection{Global estimates on the relative entropy}
Here, we provide several useful inequalities on the relative entropy $\eta(\cdot|\cdot)$.
\begin{lemma}\label{entropy ineq}\cite[Lemma 2.4]{Kang19}
    Let $\eta$ be the entropy satisfying (A1). Then, the followings hold.
    \begin{enumerate}
    \item For any $u, v \in \mathbb{R}$,
    \[
     \frac{\alpha}{2}|u-v|^2 \leq \eta(u|v).
    \]
    \item For any $u_1, u_2, v \in \mathbb{R}$ satisfying $v \leq u_2 \leq u_1$ or $u_1 \leq u_2 \leq v$,
    \[
    \eta(u_1|v) \geq \eta(u_2|v).
    \]
    \end{enumerate}
\end{lemma}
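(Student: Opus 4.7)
The plan for part (1) is to apply the second-order Taylor expansion of $\eta$ centered at $v$. Writing the remainder in Lagrange form gives
\[
\eta(u|v) = \eta(u) - \eta(v) - \eta'(v)(u-v) = \frac{1}{2}\eta''(\xi)(u-v)^2
\]
for some $\xi$ between $u$ and $v$ (with $\xi = v$ when $u=v$). Since assumption (A1) provides $\eta''(\xi) \geq \alpha$, the bound $\frac{\alpha}{2}|u-v|^2 \leq \eta(u|v)$ follows immediately. Equivalently, one could use the integral remainder $\eta(u|v) = \int_v^u \eta''(s)(u-s)\,ds$ and estimate the integrand pointwise by $\alpha$, integrating to obtain the same conclusion.

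For part (2), the plan is to regard $u \mapsto \eta(u|v)$ as a smooth scalar function for fixed $v$, and study its monotonicity. Differentiating in $u$ gives
\[
\frac{d}{du}\eta(u|v) = \eta'(u) - \eta'(v).
\]
By (A1), $\eta'' \geq \alpha$ with $\alpha>0$ (consistent with the representative entropy $\eta(u) = e^{bu}+e^{-bu}+u^4+u^2$ featured in the paper), so $\eta'$ is strictly increasing on $\mathbb{R}$. Consequently $\eta'(u)-\eta'(v)\geq 0$ for $u\geq v$ and $\eta'(u)-\eta'(v)\leq 0$ for $u\leq v$, which makes $\eta(\,\cdot\,|v)$ nondecreasing on $[v,\infty)$ and nonincreasing on $(-\infty,v]$. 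Under the hypothesis $v\leq u_2\leq u_1$, the first monotonicity yields $\eta(u_1|v)\geq \eta(u_2|v)$; the case $u_1\leq u_2\leq v$ is handled symmetrically via the second monotonicity.

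Neither part presents a genuine obstacle: both statements reduce directly to the strict convexity of $\eta$ encoded in (A1). The only minor point worth flagging is that (1) is informative only when $\alpha>0$, which is the relevant regime for the entropies considered here and will be used freely in the sequel.
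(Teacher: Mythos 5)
Your proof is correct on both counts. For part (1) the Taylor--Lagrange (or equivalently the integral-remainder) identity $\eta(u|v)=\tfrac12\eta''(\xi)(u-v)^2$ combined with $\eta''\geq\alpha$ is exactly the standard argument, and for part (2) the observation that $\partial_u\,\eta(u|v)=\eta'(u)-\eta'(v)$ together with strict monotonicity of $\eta'$ gives the stated monotonicity of $\eta(\cdot|v)$ on each side of $v$. The paper itself does not reproduce a proof here --- it merely cites \cite[Lemma 2.4]{Kang19} --- but the argument found there is the same elementary convexity calculation you wrote, so there is nothing to add beyond your remark that both facts are just the quantitative convexity of $\eta$ encoded in (A1) with $\alpha>0$.
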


\subsection{Multi-dimensional nonlinear Poincar\'e type inequality with constraints}
In this subsection, we present the multi-dimensional nonlinear Poincar\'e type inequality with certain constraints, which extends the 1-dimensional case in \cite{KV21}. This inequality will be crucially used to prove Proposition \ref{inside}.
\begin{prop}\label{multiKV}
    For a given $M>0$, there exists $\delta_M > 0$ and $\e_M \in (0,1)$ such that for any $0<\delta < \delta_M$ and $0<\e < \e_M$, the following is true.

For any $W(z,x') \in L^2((0,1)\times \mathbb{T}^2)$ with $\sqrt{z(1-z)}\partial_z W \in L^2((0,1) \times \mathbb{T}^2)$, if $W$ satisfies
\[
\|W\|_{L^\infty} \leq \frac{1}{\e}\quad \text{and} \quad \intTZ |W|^2\,dz\,dx' \leq M, 
\]
then we have
\[
R_{\e,\delta}(W) \leq 0,
\]
where
\begin{equation}\label{poincare terms}
\begin{aligned}
R_{\e,\delta}(W) &= -\frac{1}{\delta}\left(\intTZ W^2 \,dz\,dx' + 2\intTZ W \,dz\,dx'\right)^2 + (1+\delta) \intTZ W^2\,dz\,dx'  \\
& + \frac{2}{3}\intTZ W^3\,dz\,dx' + \delta \intTZ |W|^3\,dz\,dx'\\
& - (1-\delta) \intTZ z(1-z)|\rd_z W|^2\,dz\,dx' - \frac{1-\delta}{\e^{3/2}} \sum_{i=2}^3 \intTZ |\rd_{x_i} W|^2\,dz\,dx'.
\end{aligned} 
\end{equation}
\end{prop}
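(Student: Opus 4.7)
The plan is to reduce this multi-dimensional inequality to the one-dimensional nonlinear Poincar\'e inequality of \cite[Proposition 3.3]{KV21} by an orthogonal decomposition of $W$ in the transversal direction, and then to absorb all resulting deviation terms into the anomalously large transversal diffusion $\varepsilon^{-3/2}\sum_{i=2}^3\intTZ|\partial_{x_i}W|^2$. Set $\bar W(z) := \int_{\mathbb{T}^2}W(z,\cdot)\,dx'$ and $V := W-\bar W$, so that $\int_{\mathbb{T}^2}V(z,\cdot)\,dx' = 0$ for every $z\in(0,1)$.

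First I expand each term of $R_{\varepsilon,\delta}(W)$ in this decomposition: $\intTZ W^2 = \int_0^1\bar W^2\,dz + \intTZ V^2$, $\intTZ W = \int_0^1\bar W\,dz$, and $\intTZ W^3 = \int_0^1\bar W^3\,dz + 3\intTZ \bar W V^2 + \intTZ V^3$, where the cross term $\intTZ \bar W^2 V$ vanishes by the mean-zero property of $V$. The diffusions separate as $\intTZ z(1-z)|\partial_z W|^2 = \int_0^1 z(1-z)|\bar W'|^2\,dz + \intTZ z(1-z)|\partial_z V|^2$, while the $x'$-diffusion depends only on $V$. For the quartic, writing $\bar A := \int_0^1\bar W^2\,dz + 2\int_0^1\bar W\,dz$, I use the elementary bound $-\tfrac{1}{\delta}(\bar A + \intTZ V^2)^2 \le -\tfrac{1}{2\delta}\bar A^2 + \tfrac{1}{\delta}(\intTZ V^2)^2$; for the $L^3$ piece I use $|a+b|^3\le 4(|a|^3+|b|^3)$. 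Collecting the pieces yields
\[
R_{\varepsilon,\delta}(W) \le R^{\mathrm{1D}}_{\delta'}(\bar W) + \mathcal R_V,
\]
where $R^{\mathrm{1D}}_{\delta'}$ is the one-dimensional functional of \cite[Proposition 3.3]{KV21} with a slightly adjusted parameter $\delta' < \delta$, and $\mathcal R_V$ collects every term that is at least quadratic in $V$.

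For the 1D part, $\int_0^1 \bar W^2 \le \intTZ W^2 \le M$ by Jensen, and $\|\bar W\|_\infty \le \|W\|_\infty \le \varepsilon^{-1}$, so the 1D inequality gives $R^{\mathrm{1D}}_{\delta'}(\bar W) \le 0$ for $\varepsilon, \delta'$ small (absorbing the 1D propagation-direction cubic via the pointwise-type estimate used in \cite{KV21}). For the deviation $\mathcal R_V$, the hypothesis $\|W\|_\infty \le \varepsilon^{-1}$ is the decisive tool: it yields $|\bar W|, |V| \le 2\varepsilon^{-1}$ pointwise, so
\[
|\bar W V^2| + |V|^3 \le C\varepsilon^{-1}V^2, \qquad \big(\intTZ V^2\big)^2 \le M\intTZ V^2,
\]
whence $\mathcal R_V \le C(M)(\varepsilon^{-1}+\delta^{-1})\intTZ V^2$. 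Because $V$ has zero transversal mean, the Poincar\'e inequality on $\mathbb{T}^2$ applied slicewise in $z$ (with a constant $C_P$ independent of $z$) gives $\intTZ V^2 \le C_P\sum_{i=2}^3\intTZ |\partial_{x_i}V|^2$, and the total bad contribution is absorbed by $\tfrac{1-\delta}{\varepsilon^{3/2}}\sum_i\intTZ|\partial_{x_i}V|^2$ as soon as $\varepsilon^{3/2}(\varepsilon^{-1}+\delta^{-1}) \ll 1$, which forces $\varepsilon_M$ to be chosen small \emph{after} $\delta_M$ has been fixed.

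The main obstacle I expect is precisely this delicate scaling relation between $\varepsilon$ and $\delta$: the quartic cross term in $\mathcal R_V$ contributes a factor $\delta^{-1}$ that cannot be killed by sending $\varepsilon\to 0$ at fixed $\delta$, so the entire argument depends on the unusually large $\varepsilon^{-3/2}$ coefficient on the transversal diffusion (ultimately traceable in the application to the Jacobian scaling $d\xi/dz \sim \varepsilon^{-1}$ from the shock profile). The second delicate point is the cubic term on the deviation $\intTZ V^3$, for which no natural quadratic bound exists without further information; this is exactly why the $L^\infty$ hypothesis $\|W\|_\infty \le \varepsilon^{-1}$ is built into the proposition, and it is the key structural input distinguishing the multi-D proof from the 1D argument of \cite{KV21}.
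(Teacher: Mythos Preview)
Your decomposition via the transversal average $\bar W(z)=\int_{\mathbb{T}^2}W(z,\cdot)\,dx'$ is natural, but it does not prove the proposition as stated. The failure is exactly the point you flag as ``delicate'': after your quartic splitting $-\tfrac{1}{\delta}(\bar A+\intTZ V^2)^2\le -\tfrac{1}{2\delta}\bar A^2+\tfrac{1}{\delta}(\intTZ V^2)^2$, the deviation remainder $\mathcal{R}_V$ contains the term $\tfrac{1}{\delta}(\intTZ V^2)^2$, and hence $\mathcal{R}_V\le C(M)(\varepsilon^{-1}+\delta^{-1})\intTZ V^2$ as you write. Absorbing this by $\varepsilon^{-3/2}\sum_i\intTZ|\partial_{x_i}V|^2$ via Poincar\'e requires $\varepsilon^{3/2}\delta^{-1}\ll 1$. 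But the proposition asserts the conclusion for \emph{every} pair $(\delta,\varepsilon)\in(0,\delta_M)\times(0,\varepsilon_M)$ with $\delta_M,\varepsilon_M$ fixed in advance (depending only on $M$); once $\varepsilon_M>0$ is fixed, you may take $\delta\downarrow 0$ and $\varepsilon^{3/2}\delta^{-1}\to\infty$. So your scheme proves only a weaker statement in which $\varepsilon_M$ depends on $\delta$. (Your phrase ``choose $\varepsilon_M$ after $\delta_M$'' does not help, since $\delta$ is then still allowed to be arbitrarily small below $\delta_M$.) A minor related point: your bookkeeping gives $\delta'=C\delta>\delta$, not $\delta'<\delta$.

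The paper's proof avoids this $\delta^{-1}$ leakage by \emph{not} splitting the quartic at all. It uses the constant mean $\bar W=\intTZ W$, sets $Z_1=\bar W$, $Z_2=\|W-\bar W\|_{L^2}$, and keeps the quartic exactly as $-\tfrac{1}{\delta}|E(Z_1,Z_2)|^2$ with $E=Z_1^2+Z_2^2+2Z_1$. The coupling between this quartic and all the polynomial terms is then handled by an algebraic lemma on $P_\delta(Z_1,Z_2)-|E|^2$ (the multi-D analogue of the one in \cite{KV21}), together with a case split on $|E|$ small versus large. The only place the $L^\infty\le\varepsilon^{-1}$ hypothesis enters is to control the transversal part of the cubic $\int|W-\bar W|^3$ via a sharp pointwise estimate (Lemma~\ref{KV pointwise}) plus Poincar\'e on $\mathbb{T}^2$, producing factors $C\varepsilon^{-1}$ or $C\varepsilon^{-4/3}$---never $\delta^{-1}$---which are then absorbed by $\varepsilon^{-3/2}$ uniformly in $\delta$. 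That is the structural reason the paper's argument yields thresholds $\delta_M,\varepsilon_M$ that are genuinely decoupled.
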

To prove Proposition \ref{multiKV}, we first present a sharp pointwise estimate of $W - \int W$, as a multi-dimensional version of \cite[Lemma 2.8]{KV21}.
\begin{lemma}\label{KV pointwise}
Let $f \in C^1((0,1) \times \mathbb{T}^2)$. Then, for all $(z,y) \in [0,1) \times \mathbb{T}$, we have
\begin{align*}
&f(z,y) - \intTZ f(z',y')\,dz'\,dy'  \\
& \quad \leq \sqrt{L(z) + L(1-z)}\sqrt{\intTZ z(1-z)|\rd_z f(z,y)|^2\,dz\,dx'} + f(z,y) - \intT f(z,y')\,dy',
\end{align*}
where $L(z) := -z-\ln(1-z)$.
\end{lemma}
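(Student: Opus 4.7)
The plan is to reduce the claim to a one-dimensional Poincar\'e-type estimate applied to the transverse average $g(z):=\intT f(z,y')\,dy'$. Moving the term $f(z,y)-\intT f(z,y')\,dy'$ of the right-hand side to the left and using $|\mathbb{T}^2|=1$, so that $\intTZ f(z',y')\,dz'\,dy' = \int_0^1 g(z')\,dz'$, the stated inequality becomes
\[
g(z) - \int_0^1 g(z')\,dz' \;\leq\; \sqrt{L(z)+L(1-z)}\,\sqrt{\intTZ z(1-z)|\rd_z f|^2\,dz\,dx'},
\]
and in particular no longer depends on $y$. By Jensen's inequality, $|g'(z)|^2=\bigl|\intT \rd_z f(z,x')\,dx'\bigr|^2 \leq \intT |\rd_z f(z,x')|^2\,dx'$, so it suffices to prove the purely one-dimensional estimate
\[
g(z) - \int_0^1 g(z')\,dz' \;\leq\; \sqrt{L(z)+L(1-z)}\,\sqrt{\int_0^1 s(1-s)|g'(s)|^2\,ds}.
\]

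For this 1D inequality, I would start from $g(z)-\int_0^1 g(z')\,dz' = \int_0^1 \int_{z'}^z g'(s)\,ds\,dz'$, split at $z'=z$, and apply Fubini to obtain the representation
\[
g(z) - \int_0^1 g(z')\,dz' \;=\; \int_0^z s\,g'(s)\,ds \;-\; \int_z^1 (1-s)\,g'(s)\,ds.
\]
Apply Cauchy-Schwarz to each term with the weight $\sqrt{s(1-s)}$, together with the elementary identities
\[
\int_0^z \frac{s}{1-s}\,ds = -z-\ln(1-z) = L(z), \qquad \int_z^1 \frac{1-s}{s}\,ds = -(1-z)-\ln z = L(1-z),
\]
to obtain $\sqrt{L(z)}\,A$ and $\sqrt{L(1-z)}\,B$, where $A^2 := \int_0^z s(1-s)|g'|^2\,ds$ and $B^2 := \int_z^1 s(1-s)|g'|^2\,ds$. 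A final vector Cauchy-Schwarz, $\sqrt{L(z)}\,A+\sqrt{L(1-z)}\,B \leq \sqrt{L(z)+L(1-z)}\sqrt{A^2+B^2}$, combines the two into the required bound.

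I do not anticipate any serious obstacle: the lemma is essentially a direct lift of the known one-dimensional pointwise estimate \cite[Lemma 2.8]{KV21} to the multi-dimensional setting, the only analytic content being the computation of the two weighted integrals producing the logarithmic function $L$. The one conceptual point worth flagging is that the statement retains the residual term $f(z,y)-\intT f(z,y')\,dy'$ on the right-hand side precisely because averaging in $x'$ first is what permits the reduction to 1D; this way only the propagation-direction derivative $\rd_z f$ enters on the right, as is needed for the later application to Proposition \ref{multiKV} where transverse derivatives will be handled separately.
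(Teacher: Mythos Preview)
Your proposal is correct and follows essentially the same approach as the paper: isolate the transverse deviation $f(z,y)-\intT f(z,y')\,dy'$, express the remainder via the fundamental theorem in $z$, split at $z$, apply Cauchy--Schwarz with the weight $s(1-s)$ to produce $L(z)$ and $L(1-z)$, and finish with the vector Cauchy--Schwarz $\sqrt{L(z)}A+\sqrt{L(1-z)}B\le\sqrt{L(z)+L(1-z)}\sqrt{A^2+B^2}$. The only cosmetic difference is that you first pass to the transverse average $g(z)=\intT f(z,y')\,dy'$ and invoke Jensen to reduce to a purely one-dimensional estimate, whereas the paper carries the $y'$-integration through the same computation.
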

\begin{proof}
    Note that
    \begin{align*}
    &f(z,x') - \intTZ f(z',y')\,dz'\,dy'\\
    &\quad = \intTZ (f(z,x') - f(z,y'))\,dz'\,dy' + \intTZ (f(z,y') - f(z',y'))\,dz'\,dy' \\
    &\quad= f(z,x') -  \intT f(z,y')\,dy' + \intTZ (f(z,y') - f(z',y'))\,dz'\,dy'.
    \end{align*}
    For the last term, observe that
    \begin{align*}
    &\left|\intTZ (f(z,y') - f(z',y'))\,dz'\,dy' \right|\\
    &\quad \leq \intT \int_0^z \int_{z'}^z |f_z(\ztil,y')|\,d\ztil\,dz'\,dy' + \intT \int_z^1 \int_{z}^{z'} |f_z(\ztil,y')|\,d\ztil\,dz'\,dy' \\
    &\quad \leq \left(\intT \int_0^z \int_{z'}^z \frac{1}{1-\ztil}\,d\ztil\,dz'\,dy'\right)^{1/2} \left(\intT \int_0^z \int_{z'}^z (1-\ztil)|f_z(\ztil,y')|^2\,d\ztil\,dz'\,dy'\right)^{1/2}\\
    &\quad \, +\left(\intT \int_z^1 \int_{z}^{z'} \frac{1}{\ztil}\,d\ztil\,dz'\,dy'\right)^{1/2} \left(\intT \int_z^1 \int_{z}^{z'} \ztil |f_z(\ztil,y')|^2\,d\ztil\,dz'\,dy'\right)^{1/2}\\
    &=: I_1 + I_2.
    \end{align*}
    For $I_1$, we use Fubini theorem: $\int_0^z \int_{z'}^z g \,d\ztil \,dz' = \int_0^z \int_0^{\ztil}  g \,dz' \,d\ztil $ to have
    \begin{align*}
    I_1 &= \left(\intT \int_0^z \int_{z'}^z \frac{1}{1-\ztil}\,d\ztil\,dz'\,dy'\right)^{1/2} \left(\intT \int_0^z \int_{z'}^z (1-\ztil)|f_z(\ztil,y')|^2\,d\ztil\,dz'\,dy'\right)^{1/2}\\
    &=\left(\intT \int_0^z \int_0^{\ztil} \frac{1}{1-\ztil}\,dz'\,d\ztil\,dy'\right)^{1/2} \left(\intT \int_0^z \int_0^{\ztil} (1-\ztil)|f_z(\ztil,y')|^2\,dz'\,d\ztil\,dy'\right)^{1/2}\\
    &= \left(\intT \int_0^z \frac{\ztil}{1-\ztil}\,d\ztil\,dy'\right)^{1/2} \left(\intT \int_0^z \ztil(1-\ztil)|f_z(\ztil,y')|^2\,d\ztil\,dy'\right)^{1/2}\\
    &= (-z - \ln(1-z))^{1/2}\left(\intT \int_0^z \ztil(1-\ztil)|f_z(\ztil,y')|^2\,d\ztil\,dy'\right)^{1/2}.
    \end{align*}
    Likewise, for $I_2$ term, Fubini theorem: $\int_z^1 \int_{z}^{z'} g\,d\ztil \, dz' = \int_z^1 \int_{\ztil}^1 g \,dz'\,d\ztil$ implies that
    \begin{align*}
    I_2 &= \left(\intT \int_z^1 \int_{z}^{z'} \frac{1}{\ztil}\,d\ztil\,dz'\,dy'\right)^{1/2} \left(\intT \int_z^1 \int_{z}^{z'} \ztil |f_z(\ztil,y')|^2\,d\ztil\,dz'\,dy'\right)^{1/2}\\
    &= \left(\intT \int_z^1 \int_{\ztil}^1 \frac{1}{\ztil}\,dz'\,d\ztil\,dy'\right)^{1/2} \left(\intT \int_z^1 \int_{\ztil}^1 \ztil |f_z(\ztil,y')|^2\,dz'\,d\ztil\,dy'\right)^{1/2}\\
    &= (-(1-z) - \ln z)^{1/2} \left(\intT \int_z^1  \ztil(1-\ztil) |f_z(\ztil,y')|^2 \,d\ztil\,dy'\right)^{1/2}.
    \end{align*}
    Now, let $L(z):= -z-\ln(1-z)$, and 
    \begin{align*}
    X := \intT \int_0^z \ztil(1-\ztil)|f_z(\ztil,y')|^2\,d\ztil\,dy', \quad   D := \intT \int_0^1 \ztil(1-\ztil)|f_z(\ztil,y')|^2\,d\ztil\,dy'.
    \end{align*}
    Using these notations, we have
    \[
    I_1 + I_2 = \sqrt{L(z)}\sqrt{X} + \sqrt{L(1-z)}\sqrt{D-X}.
    \]
    Using the Cauchy-Schwartz inequality $(ab+cd)^2 \leq (a^2+c^2)(b^2+d^2)$, we obtain
    \begin{align*}
    I_1 + I_2 \leq \sqrt{L(z) + L(1-z)} \sqrt{D}.
    \end{align*}
    \end{proof}
Through an elementary calculation, we find that(see \cite{KV21})
\begin{equation}\label{theta def}
    \left(\int_0^1 (L(z) + L(1-z))^2 \,dz \right)^{1/2} = \sqrt{5 - \frac{\pi^2}{3}}=: \theta.
\end{equation}
The next result is the algebraic inequality on a specific polynomial. we consider the following polynomials:
\begin{align*}
&E(Z_1, Z_2):= Z_1^2 + Z_2 ^2 + 2Z_1,\\
&P_\delta(Z_1, Z_2) := (1+\delta)(Z_1^2 + Z_2 ^2) + 2 Z_1 Z_2^2 + \frac{2}{3}Z_1^3 + 6\delta(|Z_1|Z_2^2 + |Z_1|^3)\\
&\phantom{P_\delta(Z_1, Z_2) := } -2\left(1-\delta - \left(\frac{2}{3} + \delta \right) \frac{21}{20} \theta Z_2 \right)Z_2^2,
\end{align*}
where $\theta$ is defined in \eqref{theta def}.
\begin{lemma}\label{algebraic}
    There exists $\delta_2, \delta_3>0$ such that for any $0 < \delta < \delta_2$, the following is true.
    
    If $(Z_1, Z_2) \in \mathbb{R}^2$ satisfies $|E(Z_1, Z_2)| \leq \delta_3$, then
    \begin{equation}\label{algebraic inequality}
        P_\delta(Z_1, Z_2) - |E(Z_1, Z_2)|^2 \leq 0.
    \end{equation}
\end{lemma}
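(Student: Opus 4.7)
The plan is to exploit the identity $E(Z_1,Z_2) = (Z_1+1)^2 + Z_2^2 - 1$, which shows that the constraint $|E|\le \delta_3$ confines $(Z_1,Z_2)$ to a thin annulus around the unit circle centered at $(-1,0)$. I would parametrize this annulus by polar coordinates $Z_1 = -1 + \rho\cos\phi$, $Z_2 = \rho\sin\phi$, with $\rho \in [\sqrt{1-\delta_3},\sqrt{1+\delta_3}]$ and $\phi \in [0,2\pi)$, so that $E = \rho^2 - 1$ and $|E|^2 = (\rho^2-1)^2$ gives direct quadratic control in $(\rho-1)$.

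The first main step is to establish the inequality \emph{on the circle} $\rho=1$, where $E=0$. Setting $v = 1 - \cos\phi \in [0,2]$, a short computation rewrites
\begin{equation*}
P_0(\phi) = -2v - 2v^2 + \tfrac{4}{3}v^3 + \tfrac{7\theta}{5}\sin^3\phi,
\end{equation*}
so I must verify $P_0(\phi)\le 0$ as a one-variable expression. For $v$ near $0$, Taylor expansion gives $P_0 \sim -2v + \frac{7\theta}{5}(2v)^{3/2}$, and the explicit value $\theta = \sqrt{5-\pi^2/3}$ makes the linear term strictly dominate for small $v>0$; for moderate and large $v \in (0,2]$, I would exhibit an explicit elementary bound using $\tfrac{7\theta}{5}|\sin\phi|^3 \le \tfrac{7\theta}{5}(v(2-v))^{3/2}$ and the fact that $-2v-2v^2+\tfrac{4}{3}v^3 < 0$ on $(0,2]$. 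Equality $P_0=0$ occurs only at $v=0$, i.e. at the origin.

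The second step is to promote this to $\delta>0$ and $\rho$ near $1$. Away from $\phi=0$, $P_0(\phi)<0$ is bounded away from zero, so choosing $\delta_2,\delta_3$ sufficiently small absorbs the perturbations $P_\delta - P_0 = O(\delta)$ and the $(\rho-1)$-expansion via continuity. The delicate regime is $\phi$ (equivalently $(Z_1,Z_2)$) near the origin, where $P_0$ is only marginally negative: using $Z_1 \approx (\rho-1) - \rho\phi^2/2$ and $Z_2 \approx \rho\phi$, I would Taylor-expand to obtain
\begin{equation*}
P_\delta(\rho,\phi) \le -Z_2^2 + C\delta\, Z_2^2 + C|\rho-1|\, Z_2 + C(\rho-1)^2 + C|Z_2|^3,
\end{equation*}
and then absorb $C|\rho-1|\,Z_2$ into $\tfrac12 Z_2^2 + C(\rho-1)^2$, with the $(\rho-1)^2$ contribution controlled by $|E|^2 \sim 4(\rho-1)^2$. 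Shrinking $\delta_2$ and $\delta_3$ further handles the cubic $|Z_2|^3$ and the $\delta Z_2^2$ term.

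The main obstacle is the tight one-variable inequality $P_0(\phi)\le 0$ on the circle: the constants $\theta = \sqrt{5-\pi^2/3}$ and $\tfrac{21}{20}$ are tuned so that equality holds at the origin, leaving no slack there, which is what forces the appearance of the corrector term $|E|^2$ in the final inequality and demands careful bookkeeping of the leading $-Z_2^2$ term against the $Z_2^3$ cubic.
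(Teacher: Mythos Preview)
Your strategy is essentially the same as the paper's: split into a neighborhood of the origin, where the $|E|^2$ corrector is needed, and the rest of the annulus $|E|\le\delta_3$, where one relies on strict negativity of $P_0$ on the circle $E=0$ and continuity/compactness. The paper parametrizes the circle by $Z_1\in[-2,0]$ rather than by your polar angle $\phi$, but since $Z_1=-v$ your one-variable expression $-2v-2v^2+\tfrac43 v^3+\tfrac{7\theta}{5}(v(2-v))^{3/2}$ is exactly the paper's auxiliary function $g(Z_1)$.

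The genuine gap is precisely that one-variable inequality. You write that for moderate and large $v$ you ``would exhibit an explicit elementary bound'' using $\tfrac{7\theta}{5}|\sin\phi|^3\le\tfrac{7\theta}{5}(v(2-v))^{3/2}$ and the negativity of $-2v-2v^2+\tfrac43 v^3$, but that restates the problem rather than solving it: the whole question is whether the positive cube-root term can overtake the polynomial part, and the numerical margin is only about $10\%$ (near $v\approx 0.35$--$0.4$). The paper isolates this as a separate lemma and spends a full four-step argument on it---analysing the sign of $g''$ on a subinterval, the sign of $g'$ on two further subintervals, and finally eliminating a hypothetical interior maximizer by computing $g(x_*)$ explicitly from the critical-point relation. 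Nothing you have written substitutes for this; a single ``elementary bound'' covering all of $(0,2]$ is not obvious and you have not produced one.

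A smaller point: in your near-origin expansion the term $C|\rho-1|\,Z_2$ is not what actually appears. With $s=\rho-1$ and $\phi$ small one gets $P_0-|E|^2\approx -3s^2-\phi^2-s\phi^2+\tfrac{7\theta}{5}\phi^3$, so the cross term is of order $|\rho-1|\,Z_2^2$, not $|\rho-1|\,Z_2$. This is harmless (it is absorbed directly into $-\phi^2$ for small $s$), but your stated inequality and the subsequent Cauchy--Schwarz splitting are based on the wrong cross term. The paper avoids polar coordinates here and instead works in a small ball $B_r(0)$ directly, using $-|E|^2\le -2Z_1^2+r^2(Z_1^2+Z_2^2)$ to kill the positive $(1+\delta)Z_1^2$ in $P_\delta$, which is a bit cleaner.
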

The proof of Lemma \ref{algebraic} is almost same as in \cite[Proposition 3.2]{KV21}. We will give a proof of Lemma \ref{algebraic} in Appendix \ref{proof alg} for reader's convenience.

Now, we present the Poincar\'e type inequality on $[0,1] \times \mathbb{T}^2$, which is the extension of the 1-dimensional Poincar\'e type inequality in \cite{KV21}.
\begin{lemma}\label{Poincare}\cite{Wang22}
    (Poincar\'e type inequality without constraints) \\
    For $W:[0,1] \times \mathbb{T}^2 \to \mathbb{R}$ such that
    \[
    \intTZ z(1-z)|\rd_zW|^2 \,dz\,dx' + \sum_{i=2}^3  \intTZ |\rd_{x_i} W|^2 \,dz\,dx' < +\infty,
    \]
    we have
    \[
    \intTZ |W - \Wbar|^2 \,dz\,dx' \leq \frac{1}{2} \intTZ z(1-z)|\rd_zW|^2 \,dz\,dx' + \frac{1}{4\pi^2}\sum_{i=2}^3 \intTZ |\rd_{x_i} W|^2 \,dz\,dx',
    \]
    where $\displaystyle{\Wbar :=  \intTZ W \,dz\,dx'}$, the average of $W$.
\end{lemma}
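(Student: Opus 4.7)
The plan is to decompose $W - \Wbar$ into two $L^2$-orthogonal pieces, one measuring oscillation in the transverse variable $x'\in\TT^2$ and one measuring oscillation in $z\in(0,1)$, and to bound each piece by the matching Dirichlet-type term on the right-hand side. Introducing the partial average
$$\widetilde{W}(z) := \intT W(z,x')\,dx',$$
I would write $W - \Wbar = (W - \widetilde{W}) + (\widetilde{W} - \Wbar)$. Since $\intT (W-\widetilde{W})(z,\cdot)\,dx' = 0$ while $\widetilde{W}(z)-\Wbar$ is $x'$-independent, the two pieces are orthogonal in $L^2(\TT^2)$ at every fixed $z$, and (using $|\TT^2|=1$)
$$\intTZ |W-\Wbar|^2\,dz\,dx' \;=\; \intTZ |W-\widetilde{W}|^2\,dz\,dx' \;+\; \int_0^1 |\widetilde{W}(z)-\Wbar|^2\,dz.$$

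For the first piece I would apply the sharp Poincar\'e inequality on $\TT^2 = \bbr^2/\ZZ^2$ fiberwise: since the smallest nonzero eigenvalue of $-\Delta_{x'}$ on $\TT^2$ is $4\pi^2$,
$$\intT |W(z,\cdot)-\widetilde{W}(z)|^2\,dx' \leq \frac{1}{4\pi^2}\sum_{i=2}^3 \intT |\rd_{x_i}W(z,\cdot)|^2\,dx',$$
and integrating in $z$ supplies the transversal term with the claimed constant $1/(4\pi^2)$.

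For the second piece the critical input is the sharp one-dimensional weighted inequality
$$\int_0^1 |g(z) - \bar g|^2\,dz \leq \frac{1}{2}\int_0^1 z(1-z)|g'(z)|^2\,dz, \qquad \bar g := \int_0^1 g(z)\,dz,$$
applied to $g=\widetilde{W}$. The constant $1/2$ is sharp and follows from a spectral analysis of the singular Sturm--Liouville operator $\mathcal{L} g := -\rd_z(z(1-z)\rd_z g)$ on $(0,1)$ with natural boundary conditions: constants lie in its kernel and the next eigenfunction is $g(z)=z-\tfrac{1}{2}$ with eigenvalue $2$, since $\mathcal{L}(z-\tfrac12) = -\rd_z(z(1-z)) = 2(z-\tfrac12)$. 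This is the 1D tool already used in \cite[Proposition 3.3]{KV21}. Combining it with Jensen's inequality
$$|\widetilde{W}'(z)|^2 = \bigg|\intT \rd_z W(z,x')\,dx'\bigg|^2 \leq \intT |\rd_z W(z,x')|^2\,dx'$$
and multiplying through by $z(1-z)$ before integrating in $z$ yields
$$\int_0^1 |\widetilde{W}(z)-\Wbar|^2\,dz \leq \frac{1}{2}\intTZ z(1-z)|\rd_z W|^2\,dz\,dx'.$$

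Summing the two bounds proves the lemma. The only nontrivial ingredient is the sharp 1D weighted Poincar\'e inequality, which is the main obstacle; the reason the multi-dimensional proof is short is the tensor-product structure of $(0,1)\times\TT^2$, which lets the orthogonal decomposition above reduce the claim directly to this 1D input paired with the standard torus Poincar\'e inequality.
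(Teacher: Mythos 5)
Your proof is correct. The key observation is the orthogonal decomposition $W-\Wbar = (W-\widetilde W) + (\widetilde W - \Wbar)$ with $\widetilde W(z) = \intT W(z,x')\,dx'$, which indeed kills the cross term since $\intT(W-\widetilde W)(z,\cdot)\,dx'=0$ while $\widetilde W(z)-\Wbar$ is $x'$-independent; the fiberwise torus Poincar\'e with constant $1/(4\pi^2)$ (first nonzero eigenvalue of $-\Delta$ on $\RR^2/\ZZ^2$ is $4\pi^2$) handles the transversal part, and the one-dimensional weighted inequality with sharp constant $1/2$ (Legendre operator, first nonzero eigenvalue $2$ with eigenfunction $z-\tfrac12$) together with Jensen handles the $z$-part. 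Note that the paper does not supply its own argument for this lemma --- it is stated as a citation to \cite{Wang22} --- so there is no in-document proof to compare against; your tensor-product reduction to the 1D inequality of \cite[Proposition 3.3]{KV21} plus the standard torus Poincar\'e is the natural and correct route, and it yields exactly the stated constants.
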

$\bullet$ \textit{Proof of Proposition \ref{multiKV}:}
For convenience, we use the notation $\widetilde{\Omega} := [0,1] \times \mathbb{T}^2$.
We first separate the cubic term in $R_{\e,\delta}$ into three parts:
\begin{align*}
\intOtil W^3 \,dz\,dx' &= \intOtil ((W-\Wbar) + \Wbar)^3 \,dz\,dx'\\
&= \intOtil (W-\Wbar)^3 \,dz\,dx' + 2\Wbar \intOtil (W-\Wbar)^2 \,dz\,dx' + \Wbar  \intOtil W^2 \,dz\,dx'.
\end{align*}
Then, $R_{\e,\delta}$ can be expressed as follows:
\begin{align*}
R_{\e,\delta}(W) &= -\frac{1}{\delta}\left(\intOtil W^2 \,dz\,dx' + 2\intOtil W \,dz\,dx'\right)^2 + (1+\delta) \intOtil W^2\,dz\,dx'  \\
& + \frac{4}{3}\Wbar \intOtil (W-\Wbar)^2 \,dz\,dx' + \frac{2}{3}\intOtil (W-\Wbar)^3 \,dz\,dx'+ \frac{2}{3} \Wbar  \intOtil W^2 \,dz\,dx' \\
& + \delta \intOtil |W|^3\,dz\,dx' - (1-\delta) \intOtil z(1-z)|\rd_z W|^2\,dz\,dx' - \frac{1-\delta}{\e^{3/2}} \sum_{i=2}^3 \intOtil |\rd_{x_i} W|^2\,dz\,dx'.
\end{align*}
Now, let
\[
Z_1 := \Wbar, \quad Z_2 := \left(\intOtil (W- \Wbar)^2 \,dz\,dx'\right)^{1/2}, \quad E(Z_1, Z_2):= Z_1^2 + Z_2^2 + 2Z_1.
\]
Note that
\[
\intOtil W^2 \,dz\,dx' = Z_1^2 + Z_2^2,
\]
and
\begin{align*}
\intOtil |W|^3 \,dz\,dx' &\leq \intOtil |W- \Wbar|^3 \,dz\,dx' + 3|\Wbar| \intOtil |W- \Wbar|^2 \,dz\,dx'\\
&\quad + 3|\Wbar|^2 \intOtil |W- \Wbar| \,dz\,dx' + |\Wbar|^3\\
&\leq \intOtil |W- \Wbar|^3 \,dz\,dx' + 3|Z_1|Z_2^2 + 3|Z_1|^{3/2}(|Z_1|^{1/2}Z_2) + |Z_1|^3\\
&\leq \intOtil |W- \Wbar|^3 \,dz\,dx' + 6|Z_1|Z_2^2 + 4|Z_1|^3.
\end{align*}
Thus, we have
\begin{align*}
R_{\e,\delta}(W) &\leq -\frac{1}{\delta}|E(Z_1,Z_2)|^2 + (1+\delta)(Z_1^2 + Z_2^2) + \frac{4}{3}Z_1Z_2^2 + \frac{2}{3}Z_1(Z_1^2+ Z_2^2)\\
&\quad +\left(\frac{2}{3}+\delta \right) \intOtil |W- \Wbar|^3 \,dz\,dx' + 6\delta(|Z_1|Z_2^2 + |Z_1|^3) \\
&\quad - (1-\delta) \intOtil z(1-z)|\rd_z W|^2\,dz\,dx' - \frac{1-\delta}{\e^2} \intOtil |\rd_y W|^2\,dz\,dx'\\
&=:  -\frac{1}{\delta}|E(Z_1,Z_2)|^2 + (1+\delta)(Z_1^2 + Z_2^2)  + 2Z_1Z_2^2 + \frac{2}{3}Z_1^3 + 6\delta (|Z_1|Z_2^2 + |Z_1|^3) + \mathcal{P},
\end{align*}
where
\[
\mathcal{P} := \left(\frac{2}{3}+\delta \right) \intOtil |W- \Wbar|^3 \,dz\,dx' - (1-\delta) \intOtil z(1-z)|\rd_z W|^2\,dz\,dx' - \frac{1-\delta}{\e^{3/2}} \intOtil |\rd_y W|^2\,dz\,dx'.
\]
For the cubic term in $\mathcal{P}$, we use Lemma \ref{KV pointwise} to have
\begin{align*}
&\intOtil |W- \Wbar|^3 \,dz\,dx'\\
& \leq \intOtil \left[\sqrt{L(z) - L(1-z)} \sqrt{\intOtil z(1-z)|W_z|^2\,dz\,dx'} \right.\\
&\phantom{\leq \intOtil \sqrt{L(z)} } \left. + \left(W(z,y) - \intT W(z,y')\,dy'\right) \right]^2 |W-\Wbar|(z,y)\,dz\,dx'\\
& \leq \frac{21}{20}\left(\intOtil z(1-z)|W_z|^2\,dz\,dx'\right) \intOtil (L(z) - L(1-z))|W-\Wbar|\,dz\,dx'\\
&\quad + 21 \intOtil \left(W(z,y) - \intT W(z,y')\,dy'\right)^2 |W-\Wbar| \,dz\,dx'\\
& := I_1 + I_2.
\end{align*}
For $I_1$, we use H\"older inequality to have
\begin{align*}
I_1 &= \frac{21}{20} \left(\intOtil z(1-z)|W_z|^2\,dz\,dx' \right) \intOtil (L(z) - L(1-z))|W-\Wbar|\,dz\,dx'\\
&\leq \frac{21}{20}\theta \left(\intOtil z(1-z)|W_z|^2\,dz\,dx'\right)\left(\intOtil |W-\Wbar|^2\,dz\,dx'\right)^{1/2}\\
&=  \frac{21}{20}\theta Z_2 \intOtil z(1-z)|W_z|^2\,dz\,dx'.
\end{align*}
By the Poincar\'e inequality and the assumption $\|W - \Wbar\|_{L^\infty(\Omega)} \leq \frac{2}{\e}$, $I_2$ term can be estimated as follows:
\begin{align*}
I_2 &=  21 \intOtil \left(W(z,y) - \intT W(z,y')\,dy'\right)^2 |W-\Wbar| \,dz\,dx'\\
&\leq \frac{42}{\e} \intZT \left(W(z,y) - \intT W(\def \intZT {\int_0^1 \intT} z,y')\,dy'\right)^2 \,dx'\,dz\\
&\leq \frac{C}{\e} \sum_{i=2}^3 \intOtil |W_{x_i}|^2 \,dz\,dx'.
\end{align*}
Thus, we obtain
\begin{align*}
\mathcal{P} &\leq -\left(1-\delta + \left(\frac{2}{3} + \delta\right) \frac{21}{20}\theta Z_2 \right) \intOtil z(1-z)|W_z|^2\,dz\,dx'\\
&\quad +\frac{C}{\e}\sum_{i=2}^3 \intOtil |W_{x_i}|^2 \,dz\,dx' - \frac{1-\delta}{\e^{3/2}} \sum_{i=2}^3 \intOtil |W_{x_i}|^2 \,dz\,dx' .
\end{align*}
Since $(Z_1+1)^2 + Z_2^2 = 1+E(Z_1, Z_2)$, observe that
\[
Z_2 \leq \sqrt{1+ |E(Z_1, Z_2)|}.
\]
Since $\frac{2}{3}\cdot \frac{21}{20}\theta = 0.7\theta \approx 0.915 <1$, there exists a positive constant $\delta_{\theta} <1$ such that
\[
\frac{2}{3}\cdot \frac{21}{20}\theta \sqrt{1+\delta_\theta} < 1.
\]
Then, we take $\delta_M<1$ satisfying 
\[
1- \delta - \left(\frac{2}{3}+ \delta\right)\frac{21}{20} \theta \sqrt{1+\delta_\theta} >0, \quad \forall \delta < \delta_M.
\]

Now, we divide into two cases: $|E(Z_1, Z_2)| \leq \min\{\delta_\theta, \delta_3\}$ or $|E(Z_1, Z_2)| \geq \min\{\delta_\theta, \delta_3\}$, where $\delta_3$ is the constant in Lemma \ref{algebraic}.\vspace{0.3cm}\\
\textit{Case I:}  $|E(Z_1, Z_2)| \leq \min\{\delta_\theta, \delta_3\}$.\\
Note that
\[
Z_2 \leq \sqrt{1 + |E(Z_1, Z_2)|} \leq \sqrt{1+\min\{\delta_\theta, \delta_3\}}.
\]
Then, we find that for any $\delta < \delta_M$,
\begin{align*}
1- \delta - \left(\frac{2}{3}+ \delta\right)\frac{21}{20} \theta Z_2 &\geq 1- \delta - \left(\frac{2}{3}+ \delta\right)\frac{21}{20} \theta \sqrt{1+\min\{\delta_\theta,\delta_3\}}\\
& \geq1- \delta - \left(\frac{2}{3}+ \delta\right)\frac{21}{20} \theta \sqrt{1+\delta_\theta}\\
& > 0.
\end{align*}
Now, we use Lemma \ref{Poincare}:
\[
Z_2^2 \leq \frac{1}{2}\intOtil z(1-z)|W_z|^2\,dz\,dx' + \frac{1}{4\pi^2}\sum_{i=2}^3 \intOtil |\rd_{x_i} W|^2\,dz\,dx'
\]
to have
\begin{align*}
\mathcal{P} &\leq -2\left(1-\delta + \left(\frac{2}{3} + \delta\right) \frac{21}{20}\theta Z_2 \right)Z_2^2\\
&\quad  + \left[ \frac{1}{2\pi^2}\left(1-\delta + \left(\frac{2}{3} + \delta\right) \frac{21}{20}\theta Z_2 \right) + \frac{C}{\e} - \frac{1-\delta}{\e^{3/2}}  \right] \sum_{i=2}^3 \intOtil |\rd_{x_i} W|^2\,dz\,dx'\\
&\leq -2\left(1-\delta + \left(\frac{2}{3} + \delta\right) \frac{21}{20}\theta Z_2 \right)Z_2^2 + \left(\frac{1}{2\pi^2} + \frac{C}{\e} - \frac{(1-\delta)}{\e^{3/2}}\right)\sum_{i=2}^3 \intOtil |\rd_{x_i} W|^2\,dz\,dx'.
\end{align*}
If we take $\e>0$ small enough, we obtain
\[
    \mathcal{P} \leq -2\left(1-\delta + \left(\frac{2}{3} + \delta\right) \frac{21}{20}\theta Z_2 \right).
\]
Thus, we have
\begin{align*}
R_{\e,\delta}(W) &\leq  -\frac{1}{\delta}|E(Z_1,Z_2)|^2 + (1+\delta)(Z_1^2 + Z_2^2) + 2Z_1Z_2^2 + \frac{2}{3}Z_1^3 + 6\delta (|Z_1|Z_2^2 + |Z_1|^3)\\
&\quad + 2\left(1-\delta + \left(\frac{2}{3} + \delta\right) \frac{21}{20}\theta Z_2 \right)Z_2^2 \\
& = -\frac{1}{\delta} |E(Z_1, Z_2)|^2 + P_\delta (Z_1, Z_2).
\end{align*}
Hence, if we retake $\delta_M$ such that $\delta_M < \min\{\delta_2, 1\}$, Lemma \ref{algebraic} implies that $R_{\e,\delta}(W) \leq 0$ for all $\delta < \delta_M$, whenever $|E(Z_1, Z_2)| \leq \min\{\delta_\theta, \delta_3\}$.\vspace{0.3cm}\\
\textit{Case II:} $|E(Z_1, Z_2)| \geq \min\{\delta_\theta, \delta_3\}$.

We now use the assumption $\displaystyle{\intOtil W^2 \,dz\,dx' \leq M}$. Since
\[
|Z_1|^2 \leq  \intOtil W^2 \,dz\,dx' \leq M
\] 
and 
\[
    Z_2 =   \intOtil W^2 \,dz\,dx' -   \left(\intOtil W \,dz\,dx'\right)^2 \leq 2  \intOtil W^2 \,dz\,dx' \leq 2M,
\]
All bad terms except $\intOtil |W- \Wbar|^3 \,dz\,dx'$ are bounded by some constant $\Mtil = \Mtil(M)$. Therefore, we have
\begin{align*}
R_{\e,\delta}(W) &\leq -\frac{1}{\delta} \min\{\delta_\theta, \delta_3 \}^2 + \Mtil + \frac{2}{3}\left(1+\frac{3}{2}\delta\right)\intOtil |W- \Wbar|^3 \,dz\,dx'\\
&\quad - (1-\delta) \intOtil z(1-z)|\rd_z W|^2\,dz\,dx' - \frac{1-\delta}{\e^{3/2}} \sum_{i=2}^3 \intOtil |\rd_{x_i} W|^2\,dz\,dx'.
\end{align*}
For the remaining cubic term, we use Lemma \ref{KV pointwise} to have
\beq\label{w3est}
\begin{aligned}
&\intOtil |W- \Wbar|^3 \,dz\,dx'\\
&\leq \intOtil \left[\sqrt{L(z) - L(1-z)} \sqrt{\intOtil z(1-z)|W_z|^2\,dz\,dx'} \right.\\
&\phantom{\leq \intOtil \sqrt{L(z) }} \left. + \left(W(z,x') - \intT W(z,y')\,dy'\right) \right]^{3/2} |W-\Wbar|^{3/2}\,dz\,dx'\\
&\leq 2^{\frac{3}{2}} \left[ \left(\intOtil z(1-z)|W_z|^2\,dz\,dx'\right)^{3/4} \intOtil \big(L(z) - L(1-z)\big)^{3/4} |W-\Wbar|^{3/2}\,dz\,dx' \right.\\
&\quad \quad \left. + \intOtil  \left(W(z,x') - \intT W(z,y')\,dy'\right)^{3/2}|W-\Wbar|^{3/2}\,dz\,dx' \right]\\
&:= J_1 + J_2.
\end{aligned}
\eeq
For $J_1$, we get
\begin{align*}
J_1 &= 2^{\frac{3}{2}} \left(\intOtil z(1-z)|W_z|^2\,dz\,dx'\right)^{3/4} \intOtil (L(z) - L(1-z))^{3/4} |W-\Wbar|^{3/2}\,dz\,dx' \\
&\leq  2^{\frac{3}{2}} \left(\intOtil z(1-z)|W_z|^2\,dz\,dx'\right)^{3/4} \left(\intOtil (L(z) - L(1-z))^{3}\,dz\,dx'\right)^{1/4} \left(\intOtil |W-\Wbar|^{2}\right)^{3/4}\\
& \leq \Mtil  \left(\intOtil z(1-z)|W_z|^2\,dz\,dx'\right)^{3/4}\\
&\leq \Mtil + \frac{1}{10}\left(\intOtil z(1-z)|W_z|^2\,dz\,dx'\right)^2.
\end{align*}
Thanks to the Poincar\'e inequality and the assumption $\|W-\Wbar\|_{L^\infty(\Otil)} \leq \frac{C}{\e}$, $J_2$ can be estimated as follows:
\beq\label{j2est}
\begin{aligned}
J_2 &= 2^{\frac{3}{2}}\intOtil  \left(W(z,x') - \intT W(z,y')\,dy'\right)^{3/2}|W-\Wbar|^{3/2}\,dz\,dx'\\
&\leq 2^{\frac{3}{2}}\left(\intOtil  \left(W(z,x') - \intT W(z,y')\,dy'\right)^{2}\,dz\,dx'\right)^{3/4}\left(\intOtil |W-\Wbar|^{6}\,dz\,dx' \right)^{1/4}\\
&\leq \frac{C}{\e}  \left(\intZT \left(W(z,x') - \intT W(z,y')\,dy'\right)^{2} \,dx'\,dz\right)^{3/4} \left(\intOtil |W-\Wbar|^{2}\,dz\,dx' \right)^{1/4}\\
&\leq \frac{C}{\e} \Mtil \left(\sum_{i=2}^3 \intOtil |\rd_{x_i} W|^2\,dz\,dx'\right)^{3/4}\\
&\leq \frac{C}{\e^{4/3}}\sum_{i=2}^3 \intOtil |\rd_{x_i} W|^2\,dz\,dx' + \Mtil.
\end{aligned}
\eeq
Therefore, taking $\e>0$ and $\d>0$ small enough, we get
\begin{align*}
R_{\e,\delta}(W) &\leq -\frac{1}{\delta}\min\{\delta_{\theta}, \delta_3\} + 3\Mtil + \left[\frac{2}{30} \left(1+\frac{3}{2}\delta \right)- (1-\delta) \right] \intOtil z(1-z)|\rd_z W|^2\,dz\,dx'\\
& \quad + \left[\frac{C}{\e^{4/3}} - \frac{(1-\delta)}{\e^{3/2}}  \right] \sum_{i=2}^3 \intOtil |\rd_{x_i} W|^2\,dz\,dx' \\
&\leq -\frac{1}{\delta}\min\{\delta_{\theta}, \delta_3\} + 3\Mtil.
\end{align*}
Hence, choosing $\delta_M < \min\{\delta_2,1\}$ satisfying $-\frac{1}{\delta_M} \min\{\delta_{\theta}, \delta_3\} + 3\Mtil < 0$, we get $R_{\e,\delta}(W) < 0$ whenever $|E(Z_1, Z_2)| \geq \min\{\delta_\theta, \delta_3\}$.
\qed
\section{Proof of Theorem \ref{main}}
\setcounter{equation}{0}
\subsection{Definition of the shift function}
For any fixed $\e> 0$, we consider a continuous function $\Phi_\e$ defined by
\begin{equation}\label{truncationshift}
\Phi_\e (y):= \begin{cases}
\frac{1}{\e^2} \,\, &\text{if } \, \, y \leq -\e^2,\\
-\frac{1}{\e^4} y \,\, &\text{if } \, \, |y| \leq \e^2,\\
-\frac{1}{\e^2} \,\, &\text{if } \, \, y \geq \e^2.
\end{cases}
\end{equation}
Define a shift function $X(t)$ as a solution of following ODE:
\begin{equation}\label{shiftdef}
\begin{cases}
&\dot{X}(t) = \Phi_\e (Y(u^X))(2|B(u^X)|+1),\\
&X(0) = 0.
\end{cases}
\end{equation}
Indeed, for any $T>0$, the solution of the above ODE exists uniquely, and is absolutely continuous on $[0,T]$. The proof of the existence of the shift function can be found in \cite{Kang19}.
\subsection{Proof of Theorem \ref{main} from a main proposition}
\begin{prop}\label{mainprop}
    There exists $\delta_0 \in (0,1)$ such that for any $\e, \l$ with $\delta_0^{-1} \e < \l < \delta_0 < 1/2$, the following is true.
    
    For any $u \in \{u| |Y(u) \leq \e^2 \}$,
    \[
    R(u) := -\frac{1}{\e^4}|Y(u)|^2 + B(u) + \delta_0 \frac{\e}{\l}|B(u)| - G_0(u) - (1-\d_0)D(u) \leq 0,
    \]
    where $Y$ and $B$ are as in \eqref{XYBG}, and $G_0$ and $D(u)$ are defined by
    \begin{align*}
        G_0(u) := \sigma  \intO a'\eta(u|\util) \,d\xi\,dx', \quad D(u) := \intO a \m(u)|\nb(\eta'(u) - \eta'(\util))|^2\,d\xi\,dx'.
    \end{align*}
    \end{prop}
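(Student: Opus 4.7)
The plan is to execute the $a$-contraction-with-shifts strategy described in the \emph{Main ideas} subsection: introduce a truncation threshold $\delta_1>0$, small but independent of $\e$ and $\l$, and split the perturbation $w:=\eta'(u)-\eta'(\util)$ into an inside region $\mathcal{I}:=\{|w|\le\delta_1\}$ and an outside region $\mathcal{I}^c$. Decompose $B$, $D$, and the nonlinear pieces of $Y$ accordingly as $B=B_{\rm in}+B_{\rm out}$, etc. The goal on the inside is to absorb $B_{\rm in}$ into a combination of $-\e^{-4}|Y|^2$, $G_0$, and $D_{\rm in}$ via a sharp nonlinear Poincar\'e-type inequality, while on the outside the aim is to dominate $B_{\rm out}$ by a small fraction of $G_0$ and $D_{\rm out}$ alone.

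For the inside piece I would Taylor-expand every smooth function of $u$ appearing in $B$ and $Y$ at the reference state $\util$; since $|w|\le\delta_1$, the cubic remainder is the largest term that survives, and all higher-order pieces are directly absorbable. After the change of variables
$$
\xi\ \longmapsto\ z(\xi):=\frac{u_--\util(\xi)}{\e}\in(0,1),
$$
which makes the weight $a(\xi)=1+\l z$ affine in $z$ and produces the Jacobian $d\xi/dz=\e/|\util'|\sim 1/(\e\,z(1-z))$, the leading expression should collapse, up to an explicit scaling factor, to the functional $R_{\e,\delta}(W)$ of Proposition \ref{multiKV} evaluated at an appropriate rescaling $W$ of $w$. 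The hypothesis $\delta_0^{-1}\e<\l$ is precisely what generates the $\e^{-3/2}$ factor in front of the transversal diffusion in $R_{\e,\delta}$; the pointwise bound $\|W\|_\infty\le 1/\e$ is the rescaled version of $|w|\le\delta_1$; and the $L^2$ bound $\|W\|_{L^2}^2\le M$ comes from a localized-energy estimate of the type indicated in the paper's Lemma \ref{energysmall}. Proposition \ref{multiKV} then returns $R_{\e,\delta}(W)\le 0$ and closes the inside region.

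For the outside piece, the assumptions (A2)(i) and (A2)(v) let me dominate each summand of $B_{\rm out}$ by an integral of $|w|^2$ or $|w|$ localized either by $a'$ or by $\util'$, with the differences $\mu(u)-\mu(\util)$ controlled through (A2)(iii) and the bound $|\util''|\lesssim\e|\util'|$ supplied by Lemma \ref{shock property}. Most of these pieces are smaller than $G_0+D_{\rm out}$ by a factor of $\delta_1$ or $\e$. The delicate case is the worst bad term, essentially $\int_\Omega a'\,f_1(u|\util)\,d\xi\,dx'$: for each fixed $\xi$ I would apply the Poincar\'e inequality on $\mathbb{T}^2$,
$$
\int_{\mathbb{T}^2} w(\xi,\cdot)^2\,dx'\ \le\ \Bigl(\int_{\mathbb{T}^2} w(\xi,\cdot)\,dx'\Bigr)^{\!2}+C\sum_{i=2}^3\int_{\mathbb{T}^2}|\partial_{x_i} w(\xi,\cdot)|^2\,dx',
$$
absorbing the transversal gradient into $D_{\rm out}$, and control the remaining transversal average $\int_{\mathbb{T}^2} w\,dx'$ pointwise in $\xi$ by integrating from a reference point $\xi_0$ at which the localized energy is small; a Cauchy--Schwarz step then trades half a power of $D$ against this smallness and places the result inside $\delta_0 G_0+\delta_0 D$.

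The main obstacle, flagged by the authors, is the scale matching on the inside: taming the cubic term in $R_{\e,\delta}$ in a multi-D setting forced the introduction of the sharp pointwise estimate of Lemma \ref{KV pointwise} baked into Proposition \ref{multiKV}, and the transversal Poincar\'e step is what generates the $\e^{-3/2}$ coefficient that in turn dictates the scaling relation $\delta_0^{-1}\e<\l$. Once the inside and outside bounds are summed, the slack $\delta_0(\e/\l)|B|$ absorbs the residual lower-order contributions of $B_{\rm out}$ and the quadratic penalty $-\e^{-4}|Y|^2$, active because of the hypothesis $|Y(u)|\le\e^2$, provides the final slack needed to conclude $R(u)\le 0$, proving Proposition \ref{mainprop}.
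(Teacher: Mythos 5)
Your proposal follows essentially the same route as the paper's proof: truncation of the perturbation at a small threshold $\delta_1$, Taylor expansion plus the $z$-change of variable plus the multi-D nonlinear Poincar\'e inequality (Proposition \ref{multiKV}) on the inside, and transversal Poincar\'e together with a pointwise reference-point estimate on the outside, culminating in a case split on the size of $D(u)$. One precision worth keeping in mind when writing it out: the paper truncates the \emph{function}, defining $\ubar$ via $\eta'(\ubar)-\eta'(\util)=\psi_{\delta_1}(\eta'(u)-\eta'(\util))$, and applies the inside estimate to the globally bounded $\ubar$ rather than to $u$ restricted to a set --- a region-based split $B=B_{\rm in}+B_{\rm out}$ would not produce the global $\|W\|_{L^\infty}\le 1/\e$ bound that Proposition \ref{multiKV} requires; the outside terms are then differences like $B(u)-B(\ubar)$ and $Y(u)-Y(\ubar)$, controlled via Lemmas \ref{G monotonicity} and \ref{outside lemma}.
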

    Now, we will show that Proposition \ref{mainprop} indeed implies Theorem \ref{main}.
    By the definition of shift \eqref{shiftdef}, it suffices to prove that for almost every time $t>0$,
    \[
    \Phi_\e (Y(u^X))(2|B(u^X)|+1)Y(u^X) + B(u^X) - G(u^X) \leq 0.
    \]
    For every $u$, we define
    \[
    \mathcal{F}(u) := \Phi_\e (Y(u))(2|B(u)|+1)Y(u) + B(u) - G(u).
    \]
    From \eqref{truncationshift}, we have
    \[
    \Phi_\e (Y)(2|B|+1)Y \leq \begin{cases}
    -2|B| \, \, &\text{if }  |Y| \geq \e^2,\\
    -\frac{1}{\e^4} Y^2, \, \, &\text{if } |Y| \leq \e^2.
    \end{cases}
    \]
    Hence, for all $u$ satisfying $|Y(u)| \geq \e^2$, we have
    \[
    \mathcal{F}(u) \leq - |B(u)| - G(u) \leq 0.
    \]
    By Proposition \ref{mainprop}, we find that for all $u$ satisfying $|Y(u)| \leq \e^2$,
    \[
    \mathcal{F}(u) \leq - \delta_0 \left(\frac{\e}{\l}\right)|B(u)|- \d_0 D(u) \leq 0.
    \]
    Since $\delta_0 < 1$ and $\frac{\e}{\l}< \delta_0$, we have
    \[
    \mathcal{F}(u) \leq - \delta_0 \left(\frac{\e}{\l}\right)|B(u)|  - \d_0 D(u)\, \, \text{for any } u.
    \]
    Thus, for every fixed $t>0$, using this estimate with $u=u^X(t,\cdot,\cdot)$, we obtain
    \begin{equation}\label{cont conclusion}
    \begin{aligned}
        &\frac{d}{dt} \intO a \eta(u^X|\util)\,d\xi\,dx' \leq \mathcal{F}(u^X) \leq  - \delta_0 \left(\frac{\e}{\l}\right)|B(u^X)|- \d_0 D(u^X) \leq 0,
    \end{aligned}
    \end{equation}
    which completes the contraction estimate \eqref{contraction}.
    Moreover, \eqref{cont conclusion} implies that
    \[
    \delta_0 \left(\frac{\e}{\l}\right)|B(u^X)| \leq \intO \eta(u_0|\util)\,d\xi\,dx' < \infty,
    \]
    using \eqref{shiftdef} and $\|\Phi_\e \|_{L^\infty(\Omega)} \leq \frac{1}{\e^2}$, we get
    \[
    |\dot{X}| \leq \frac{1}{\e^2} + \frac{2}{\e^2}|B|, \quad \|B \|_{L^1(\Omega)} \leq \frac{1}{\delta_0} \frac{\l}{\e} \intO \eta(u_0|\util)\,d\xi\,dx',
    \]
    which provides the global-in-time estimate for the shift function \eqref{shiftestimate}, and thus $X \in W^{1,1}_{loc}((0,T))$.
    \qed
\section{Proof of Proposition \ref{mainprop}}
\setcounter{equation}{0}
\subsection{Expansion in the size of the shock}


First, we truncate the nonlinear perturbation $|\eta'(u) - \eta'(\util)|$ with small parameter $\d_1>0$. For values of $u$ such that $|\eta'(u) - \eta'(\util)| \leq \d_1 \ll 1$, we use Taylor expansion at $\util$ for the terms $Y, G$, and $B_I$ in \eqref{BI}.

Now, we recall the functionals $Y$ and $F$, $G_0$ and define the following new functionals:
\begin{equation}\label{BI}
    \begin{aligned}
    &Y(u):= - \intO a' \eta(u|\util) \,d\xi\,dx'+ \intO a \partial_\xi \eta'(\util)(u-\util) \,d\xi\,dx',\\
    &F(u):= -\int^u \eta''(v)f(v)\,dv,\\
    &B_I(u):=  \intO a' \Big[F(u|\util) + (\eta'(u) - \eta'(\util))(f(u) - f(\util)) + f(\util) (\eta')(u|\util)\Big] \,d\xi\,dx', \\
    &\phantom{B_I(u):=} - \intO a \eta''(\util)\util' f(u|\util) \,d\xi\,dx'\\ 
    &G_0(u) := \sigma  \intO a'\eta(u|\util) \,d\xi\,dx', \\
    &D_1(u) := \intO a \m (u) |\partial_\xi(\eta'(u) - \eta'(\util))|^2 \,d\xi\,dx', \\
    &D_2(u) : =\sum_{i=2}^3 \intO a \m (u) |\partial_{x_i} \big(\eta'(u) - \eta'(\util)\big)|^2 \,d\xi\,dx', \\
    &D(u) := D_1(u) + D_2(u).
    \end{aligned}
    \end{equation}
\begin{prop}\label{inside}
    Let $\eta$ be the entropy satisfying the entropy assumption (A1) - (A2). For any $K>0$, there exists $\delta_1 \in (0,1)$ such that for any $\delta_1^{-1}\e < \l < \delta_1$ and for any $\delta \in (0,\delta_1)$, the following is true.
    
    For any function $u:\mathbb{R} \to \mathbb{R}$ such that $D(u) + G_0(u)$ is finite, if
    \begin{equation}\label{K assumption}
        |Y(u)| \leq K \frac{\e^2}{\l} \, \, \text{and} \, \, \|\eta'(u) - \eta(\util)\|_{L^\infty(\Omega)} \leq \delta_1,
    \end{equation}
    then we have
    \begin{align*}
    \mathcal{S}_{\e, \delta}(u)&:= -\frac{1}{\e \delta}|Y(u)|^2 + B_I + \delta \frac{\e}{\l}\left|B_I\right|- \left(1-\delta \frac{\e}{\l}\right)G_0(u) - (1-\delta)D(u) \leq 0.
    \end{align*}
    \end{prop}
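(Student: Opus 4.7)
The plan is to reduce $\mathcal{S}_{\e,\delta}(u)\leq 0$ to the multi-dimensional nonlinear Poincar\'e-type inequality of Proposition \ref{multiKV}, via a Taylor expansion around $\util$ combined with a change of variables that linearizes the shock profile. Set $p:=\eta'(u)-\eta'(\util)$, which by hypothesis satisfies $\|p\|_{L^\infty}\leq \delta_1\ll 1$. Assumption (A1) then gives $u-\util=p/\eta''(\util)+O(p^2)$ pointwise. Expanding each relative functional in \eqref{BI} to cubic order in $p$ and using the profile equation \eqref{Vshock xi} to rewrite cross terms involving derivatives of $\util$, the quadratic parts of $Y$, $B_I$ and $G_0$ become integrals of $a'(\xi)\,p^2$, $a(\xi)\util'(\xi)\,p$ and similar quantities, while the strict convexity of $f_1$ produces a signed cubic term of the form $\int a'\,p^3$ in $B_I(u)$. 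The Taylor remainders are pointwise $O(|p|^3)$, hence $o(p^2)$ as $\delta_1\to 0$.

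Introduce the change of variables $z(\xi):=(u_--\util(\xi))/\e\in(0,1)$. By Lemma \ref{shock property}, $-\util'\sim \e^2 z(1-z)$, so that
\[
a'(\xi)\,d\xi \sim \l\,dz,\qquad d\xi\sim \frac{dz}{\e z(1-z)},\qquad a(\xi)\sim 1.
\]
Set $W(z,x'):=c_\sharp\sqrt{\l}\,p(\xi(z),x')$, with $c_\sharp>0$ chosen so that the leading part of $Y(u)$ becomes proportional to $\intTZ W^2+2\,\intTZ W$ (the first piece coming from $-\int a'\eta(u|\util)$, the second from the sub-leading $\int a\,\partial_\xi\eta'(\util)(u-\util)$ which carries the scale $\sqrt{\e/\l}\,\int W$). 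With this normalization, the quadratic parts of $B_I-G_0$ together with the cubic $\int a' p^3$ match $(1+\delta)\intTZ W^2+\frac{2}{3}\intTZ W^3$ of $R_{\e,\delta}(W)$, while a direct computation gives
\[
D_1(u)\gtrsim \intTZ z(1-z)|\partial_z W|^2\,dz\,dx',\qquad D_2(u)\gtrsim \frac{1}{\e^{3/2}}\sum_{i=2}^3\intTZ|\partial_{x_i}W|^2\,dz\,dx',
\]
the $\e^{-3/2}$ factor emerging from the Jacobian $d\xi/dz\sim 1/(\e z(1-z))$ together with $a\gtrsim 1$, with an additional half power of $\e$ traded off via the $L^\infty$ bound on $W$.

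The constraints of Proposition \ref{multiKV} are then verified: $\|W\|_{L^\infty}\leq c_\sharp\sqrt{\l}\,\delta_1\leq 1/\e$ holds because $\e\sqrt{\l}\to 0$ under $\delta_1^{-1}\e<\l<\delta_1$, and $\intTZ W^2\leq M(K)$ follows from the assumption $|Y(u)|\leq K\e^2/\l$ combined with the quadratic expansion of $Y$. Applying Proposition \ref{multiKV} yields $R_{\e,\delta}(W)\leq 0$, which translates back to $\mathcal{S}_{\e,\delta}(u)\leq 0$ once the cubic and higher Taylor remainders are absorbed either into $\delta(\e/\l)|B_I|$ or into a small fraction of $D=D_1+D_2$.

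The main obstacle is the precise coefficient matching in the second paragraph: the shift term $-|Y(u)|^2/(\e\delta)$ must produce exactly the nonlocal square $-(1/\delta)(\intTZ W^2+2\intTZ W)^2$ of Proposition \ref{multiKV}, which pins down $c_\sharp$ and the normalization exponent $\sqrt{\l/\e}$ simultaneously. Equally delicate is making the $\e^{-3/2}$ weight on $D_2$ emerge naturally from the shock geometry, since this weight is precisely what Proposition \ref{multiKV} requires to control the transversal cubic deviation $|W-\int_{\mathbb{T}^2}W|^3$; having it appear by hand or with any smaller power would break the whole scheme. Balancing the three small parameters $\e,\l,\delta_1$ so that every Taylor remainder absorbs into a favorable term while the matching is preserved is the central technical hurdle.
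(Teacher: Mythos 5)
Your overall plan—Taylor expand around $\util$, change variables via $z(\xi)=(u_--\util(\xi))/\e$, pass to a normalized unknown $W$, and invoke Proposition~\ref{multiKV}—is exactly the paper's route, and you correctly flag the two places where care is needed (the normalization of $W$ and the $\e^{-3/2}$ weight on $D_2$). Two points, though, need correction.

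First, the normalization. The paper sets $W:=\frac{\l}{\e}(u-\util)$ (in the $z$ variable). Since $\eta'(u)-\eta'(\util)\approx\eta''(u_-)(u-\util)$, your $W=c_\sharp\sqrt{\l}\,p$ matches this only if $c_\sharp\sim\sqrt{\l}/(\e\,\eta''(u_-))$, i.e.\ $c_\sharp$ is not an absolute constant but depends on $\e,\l$. This is not a cosmetic issue: with $c_\sharp$ treated as a constant, your $L^\infty$ verification ``$\|W\|_\infty\le c_\sharp\sqrt{\l}\,\delta_1\le 1/\e$ because $\e\sqrt{\l}\to 0$'' gives a trivially small bound on $W$, in which case the cubic terms of $R_{\e,\delta}$ would be too small and the quadratic/cubic/Poincar\'e balance that Proposition~\ref{multiKV} is built around would never be engaged. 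With the correct $W=\frac{\l}{\e}(u-\util)$, the paper's argument is that $\|W\|_\infty\le\frac{\l}{\e}\cdot\frac{\delta_1}{\alpha}\le\frac1\e$ after choosing $\delta_1\le\alpha$, which saturates the $1/\e$ ceiling and is what makes the constraint in Proposition~\ref{multiKV} meaningful.

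Second, the source of the $\e^{-3/2}$ on $D_2$. The paper does not ``trade off a half power of $\e$ via the $L^\infty$ bound on $W$.'' Instead, Lemma~\ref{diffusion} gives $d\xi/dz\sim\big(\e f_1''(u_-)z(1-z)\big)^{-1}$; using $1/(z(1-z))\ge4$ and the $W$-normalization $|\partial_{x_i}w|^2=\frac{\e^2}{\l^2}|\partial_{x_i}W|^2$, one obtains after multiplying $\mathcal{S}_{\e,\delta}$ by $\Lambda:=\frac{2}{\eta''(u_-)f_1''(u_-)}\frac{\l^2}{\e^3}$ the bound $-\Lambda D_2\le-\frac{1-C\delta_1}{C\e^2}\sum_i\intOtil|\partial_{x_i}W|^2$, i.e.\ a coefficient of order $\e^{-2}$, which is then weakened to $\e^{-3/2}$ by taking $\e$ small. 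So the $\e^{-3/2}$ is not produced ``on the nose'' by the geometry; the geometry gives something stronger, and $\e^{-3/2}$ is simply the weight that the proof of Proposition~\ref{multiKV} needs in order to absorb both the Poincar\'e estimate on $\mathbb{T}^2$ and the transversal cubic deviation in \eqref{j2est}. With these two corrections your outline coincides with the paper's argument, which otherwise imports the $1$-D expansions of $Y$, $B_I$, $G_0$, $D_1$ verbatim from \cite{Kang19} and treats $D_2$ as the only genuinely new term.
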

    \begin{proof}
    From the entropy hypothesis (A1), we obtain
    \[
    \|u- \util\|_{L^\infty(\Omega)} \leq \alpha^{-1}\d_1.
    \]

    Now, we introduce a change of varible $\xi \in \mathbb{R} \mapsto z \in [0,1]$ defined by
\[
z(\xi) = \frac{u_- - \util(\xi)}{\e}.
\]
Since $\util'<0$, this change of variable is well-defined. Note that
\[
\util' \,d\xi = -\e \,dz \, \, \text{and } \, \, a' \,d\xi = \l \,dz.
\]
Moreover, we denote
\[
w(t,z,x') := (u-\util) \circ (z^{-1},id),
\]
and normalized function
\[
W(t,z,x') := \frac{\l}{\e}w(t, z,x'),
\]
where $id : \mathbb{T}^{2} \to  \mathbb{T}^{2}$ is the identity operator on $ \mathbb{T}^{2}$.
Observe that if we choose $\delta_1$ small enough such that $\delta_1 \leq \alpha$, then if 
\[
\|\eta'(u) - \eta'(\util)\|_{L^\infty (\mathbb{R} \times \mathbb{T}^2)} \leq \delta_1,
\]
we obtain
\begin{equation}\label{W bound}
\begin{aligned}
\|W \|_{L^\infty ([0,1] \times \mathbb{T}^2)} = \frac{\l}{\e}\|u - \util \|_{L^\infty (\Omega)} \leq \frac{\l}{\e}\frac{1}{\alpha}\|\eta'(u) - \eta'(\util)\|_{L^\infty (\Omega)} \leq \frac{\l}{\e} \frac{\delta_1}{\alpha} \leq \frac{1}{\e}.
\end{aligned} 
\end{equation}
In what follows, we use Taylor theorem at $\util$ for the functionals $Y, B_1, \cdots, B_4$, and $G$. After expanding these functionals, we apply Proposition \ref{multiKV} to derive Proposition \ref{inside}.

First of all, as in the proof of \cite[Proposition 4.1]{Kang19}, we have the followings:\vspace{0.15cm}

1. There exists $M = M(K)>0$ such that
    \begin{equation}\label{M constant}
        \intOtil W^2\,dz\,dx' \leq M,
    \end{equation}
where $\Otil := [0,1] \times \mathbb{T}^2$. Here, the constant $M$ does not depend on the size of the truncation $\d_1$. \vspace{0.15cm}

2. Moreover, we have
    \begin{align*}
        &\frac{2}{\eta''(u_-) f''(u_-)} \frac{\l^2}{\e^3} \left[-\frac{1}{\e \delta}|Y(u)|^2 + B_I + \delta \frac{\e}{\l}\left|B_I\right|- \left(1-\delta \frac{\e}{\l}\right)G_0(u) - (1-\delta)D_1(u) \right]\\
        &\leq -\frac{1}{C\delta_1} \left(\intOtil W^2\,dz\,dx' + 2\intOtil W\,dz\,dx' \right)^2 + (1+C\delta_1) \intOtil W^2\,dz\,dx' + \frac{2}{3}\intOtil W^3\,dz\,dx' \\
        &\quad + C\delta_1 \intOtil |W|^3 dz dx' - (1-C\delta_1)\intOtil z(1-z)|\rd_z W|^2 dz dx'.
        \end{align*}
Thus, it remains to estimate $D_2$.\vspace{0.5cm}\\
$\bullet$ (Estimate of $D_2$) In order to estimate the diffusion term, we need to approximate the Jacobian $\frac{dz}{d\xi}$. This is provided by the following lemma.
\begin{lemma}\label{diffusion}\cite[Lemma 4.3]{Kang19}
    There exists constant $C>0$ such that for any $\e < \delta_1$, and any $z \in [0,1]$,
    \[
    \left| \frac{1}{z(1-z)}\frac{dz}{d\xi} - \e \frac{f_1''(u_-)}{2}\right| \leq C \e^2.
    \]
\end{lemma}
Now, we estimate $D_2$. Since
\begin{align*}
D_2 & = \sum_{i=2}^3 \intO a \frac{1}{\eta''(u)} \left|\eta''(u)\rd_{x_i} (u-\util) + (\eta''(u) - \eta''(\util))\util_{x_i} \right|^2\,d\xi\,dx'\\
&= \sum_{i=2}^3 \intO a \frac{1}{\eta''(u)} \left|\eta''(u)\rd_{x_i} (u-\util)\right|^2\,d\xi\,dx',
\end{align*}
we have
\[
-  D_2 \leq -(1-C\delta_1) \eta''(u_-) \sum_{i=2}^3 \intOtil |w_{x_i}|^2 \left(\frac{d\xi}{dz}\right) \,dz\,dx'.
\]
Therefore, using Lemma \ref{diffusion} and the algebraic inequality: $1/(z(1-z)) \geq 4$ on $[0,1]$, we have
\beq\label{d2est}
\begin{aligned}
-D_2 &\leq -(1-C\delta_1) \eta''(u_-) \frac{2}{(C\e^2 + \e f''(u_-))} \sum_{i=2}^3  \intOtil \frac{|w_{x_i}|^2}{z(1-z)}\,dz\,dx'\\
&\leq -(1-C\delta_1) \eta''(u_-) \frac{8}{(C\e^2 + \e f''(u_-))}  \sum_{i=2}^3  \intOtil |w_{x_i}|^2\,dz\,dx'\\
&=-(1-C\delta_1) \eta''(u_-) \frac{8}{C\e} \frac{\e^2}{\l^2} \sum_{i=2}^3  \intOtil |W_{x_i}|^2\,dz\,dx'\\
&\leq -(1-C\delta_1) \left(\frac{\eta''(u_-) f''(u_-)}{2}\frac{\e^3}{\l^2}\right)  \frac{1}{C\e^2}  \sum_{i=2}^3  \intOtil |W_{x_i}|^2\,dz\,dx'.
\end{aligned}
\eeq
Thus, we obtain
\[
-\frac{2}{\eta''(u_-) f''(u_-)} \frac{\l^2}{\e^3} D_2 \leq -\frac{1-C\delta_1}{C\e^2} \sum_{i=2}^3  \intOtil |W_{x_i}|^2\,dz\,dx'.
\]
$\bullet$ (Conclusion) Combining all these estimates, we have
\begin{align*}
&\frac{2}{\eta''(u_-) f''(u_-)} \frac{\l^2}{\e^3}  \mathcal{S}_{\e, \delta}(u)\\
&\leq -\frac{1}{C\delta_1} \left(\intOtil W^2\,dz\,dx' + 2\intOtil W\,dz\,dx' \right)^2 + (1+C\delta_1) \intOtil W^2\,dz\,dx' + \frac{2}{3}\intOtil W^3\,dz\,dx' \\
&\quad + C\delta_1 \intOtil |W|^3 dz dx' - (1-C\delta_1)\intOtil z(1-z)|\rd_z W|^2 dz dx' - \frac{1-C\delta_1}{C\e^2} \sum_{i=2}^3 \intOtil |W_{x_i}|^2\,dz\,dx'.
\end{align*}
Let $\delta_M$ be the constant in Lemma \ref{multiKV} corresponding to the constant $M$ in \eqref{M constant}. By taking $\delta_1$ and $\e$ small enough such that $C\delta_1 \leq \delta_M$ and $C\e^2 \leq \e^{3/2}$, we have
\begin{align*}
&\frac{2}{\eta''(u_-) f''(u_-)} \frac{\l^2}{\e^3}  \mathcal{S}_{\e, \delta}(u)\\
&\leq  -\frac{1}{\delta_M}\left(\intOtil W^2 \,dz\,dx' + 2\intOtil W \,dz\,dx'\right)^2 + (1+\delta_M) \intOtil W^2\,dz\,dx' + \frac{2}{3}\intOtil W^3\,dz\,dx'\\
&\quad  + \delta_M \intOtil |W|^3\,dz\,dx'- (1-\delta_M) \intOtil z(1-z)|\rd_z W|^2\,dz\,dx' - \frac{1-\delta_M}{\e^{3/2}} \sum_{i=2}^3 \intOtil |\rd_{x_i} W|^2\,dz\,dx'\\
&= R_{\e, \delta_M}(W),
\end{align*}
where $R_{\e, \delta_M}(W)$ is defined in Proposition \ref{multiKV}. Hence, by Proposition \ref{multiKV}, we conclude that
\[
\frac{2}{\eta''(u_-) f''(u_-)} \frac{\l^2}{\e^3}  \mathcal{S}_{\e, \delta}(u) \leq  R_{\e, \delta_M}(W) \leq 0.
\]
\end{proof}
\subsection{Truncation of the big values of $|\eta'(u) - \eta'(\util)|$}
Now, it remains to show that for values of $u$ such that $|\eta'(u) - \eta'(\util)|>\delta_1$, all bad terms can be controlled by the small portion of good terms. However, the value of $\d_1$ is conditioned to the constant $K$ in Proposition \ref{mainprop}. Therefore, we first need to find a uniform bound on $Y$ that is independent of the truncation size $\d_1$.

Now, we define a truncation function related to the value $|\eta'(u) - \eta'(\util)|$ with a truncation parameter $r>0$. For given $r>0$, let $\psi_r$ be a continuous function defined by
\[
\psi_r(y):= \begin{cases}
y &\text{if } |y| \leq r,\\
r &\text{if } y > r,\\
-r &\text{if } y < -r.\\
\end{cases}
\]
We define the truncation function $\ubar_r$ by
\begin{equation}\label{def ubar}
    \eta'(\ubar_r) - \eta'(\util) = \psi_r(\eta'(u) - \eta'(\util)).
\end{equation}
Note that for given $r>0$, $\ubar_r$ is well-defined, since $\eta'$ is one-to-one function.

First, we have the following lemma.
\begin{lemma}\label{energysmall}
    There exists $\delta_0, C, K > 0$ such that for any $\e, \l > 0$ with $\delta_0^{-1} \e < \l < \delta_0$, the following holds whenever $|Y(u)| \leq \e^2$.
    \[
    \intO a' \eta(u|\util)\,d\xi\,dx' \leq C\frac{\e^2}{\l},
    \]
    and
    \[
    |Y(\ubar_r)|\leq K\frac{\e^2}{\l} \, \, \text{for any} \, \, r>0. 
    \]
\end{lemma}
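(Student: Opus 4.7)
The argument rests on a Young-inequality-plus-absorption scheme applied to the two pieces of $Y$. Decompose
\[
Y(u) = -I_1(u) + I_2(u), \qquad I_1(u) := \intO a'\,\eta(u|\util)\,d\xi\,dx', \quad I_2(u) := \intO a\,\partial_\xi\eta'(\util)(u-\util)\,d\xi\,dx',
\]
so $I_1(u) \geq 0$. The hypothesis $|Y(u)| \leq \e^2$ then yields $I_1(u) \leq \e^2 + |I_2(u)|$, reducing the whole problem to controlling $|I_2|$ by a small fraction of $I_1$ plus a remainder of size $O(\e^2/\l)$.

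For the first inequality, I would apply Young's inequality with a small parameter $\nu > 0$ to split the integrand as $a\,\partial_\xi\eta'(\util)(u-\util) = \sqrt{a'}(u-\util)\cdot\bigl(a\,\partial_\xi\eta'(\util)/\sqrt{a'}\bigr)$, obtaining
\[
|I_2(u)| \leq \frac{\nu}{2}\intO a'(u-\util)^2\,d\xi\,dx' + \frac{1}{2\nu}\intO \frac{a^2 |\partial_\xi\eta'(\util)|^2}{a'}\,d\xi\,dx'.
\]
Lemma \ref{entropy ineq}(1) bounds the first integral by $(2/\alpha) I_1(u)$. For the second, I would combine three ingredients: $\partial_\xi\eta'(\util) = \eta''(\util)\util'$ is pointwise bounded by $C|\util'|$ since $\util$ stays in the compact interval $[u_+,u_-]$; the construction of the weight gives $a' = -(\l/\e)\util'$, so $|\util'|^2/a' = (\e/\l)|\util'|$; and $\intO |\util'|\,d\xi\,dx' = u_- - u_+ = \e$ using $|\mathbb{T}^2| = 1$. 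Combined with $a \leq 1+\l \leq 2$, these give $\int a^2|\partial_\xi\eta'(\util)|^2/a' \leq C\e^2/\l$. Choosing $\nu = \alpha/2$ turns the $I_1$ coefficient into $1/2$ and substituting back yields $I_1(u) \leq 2\e^2 + C\e^2/\l$, which is bounded by $C\e^2/\l$ after using $\l < \delta_0 < 1$.

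For the second inequality, I would use the fact that definition \eqref{def ubar} together with strict monotonicity of $\eta'$ forces $\ubar_r - \util$ to have the same sign as $u-\util$ and to satisfy $|\ubar_r - \util| \leq |u-\util|$; in particular pointwise $\util$ lies between $\ubar_r$ and $u$. Lemma \ref{entropy ineq}(2) then gives $\eta(\ubar_r|\util) \leq \eta(u|\util)$ pointwise, whence $I_1(\ubar_r) \leq I_1(u) \leq C\e^2/\l$. Applying the same Young estimate to $I_2(\ubar_r)$, now with $(\ubar_r - \util)^2 \leq (u-\util)^2$, yields $|I_2(\ubar_r)| \leq C\e^2/\l$ with a constant independent of $r$. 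Combining the two bounds delivers $|Y(\ubar_r)| \leq K\e^2/\l$ uniformly in $r > 0$.

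The main obstacle is the sharpness of the scaling in the estimate for $I_2$: a naive $L^2$ bound on $\partial_\xi\eta'(\util)$ would give only $\int|\partial_\xi\eta'(\util)|^2 = O(\e^3)$, missing the correct $O(\e^2/\l)$ scale. The key trick is to keep the weight $a'$ in the denominator and exploit the pointwise identity $|\util'|^2/a' = (\e/\l)|\util'|$, which collapses the remaining integral to the total variation $\int|\util'| = \e$ of the shock. Once this scaling is in hand the absorption is unconditional, because $\alpha$ from hypothesis (A1) is a fixed constant independent of $\e$ and $\l$, so $\nu = \alpha/2$ works regardless of how small $\delta_0$ must ultimately be chosen.
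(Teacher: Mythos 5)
Your proof follows essentially the same route as the paper's (omitted but implicit in Lemma 4.4 of \cite{Kang19}): decompose $Y$, absorb the $I_1$ part produced by Young's inequality, and exploit $a' = -(\l/\e)\util'$ together with $\int_\Omega|\util'| = \e$ (equivalently $\int a' = \l$) to land on the $O(\e^2/\l)$ scale; the paper does Cauchy--Schwarz on $\int a'|u-\util|$ and then Young's on the resulting scalar product, while you apply Young's pointwise, which is just a cosmetic rearrangement of the same estimate. One small slip in part (ii): from $\eta'(\ubar_r)-\eta'(\util)=\psi_r(\eta'(u)-\eta'(\util))$ and monotonicity of $\eta'$ you correctly infer that $\ubar_r-\util$ has the same sign as $u-\util$ with $|\ubar_r-\util|\le|u-\util|$, but the consequence is that $\ubar_r$ lies between $\util$ and $u$, not that $\util$ lies between $\ubar_r$ and $u$ as you wrote; fortunately it is the former ordering ($v\le u_2\le u_1$ or $u_1\le u_2\le v$ with $v=\util$, $u_2=\ubar_r$, $u_1=u$) that Lemma \ref{entropy ineq}(2) requires, so the invoked inequality $\eta(\ubar_r|\util)\le\eta(u|\util)$ and the rest of the argument are correct.
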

As we mentioned, the constant $K>0$ does not depend on the truncation size $r>0$.
\begin{proof}
Since the proof is essentially same as in \cite[Lemma 4.4]{Kang19}, we omit the proof.
\end{proof}
    From now on, we fix the constant $\delta_1 \leq \min(1, 2|u_-|)$ of Proposition \ref{inside} associated to the constant $K$ of Lemma \ref{energysmall}. For simplicity, we use the notation:
\[
\ubar := \ubar_{\delta_1}.
\]
Then, \eqref{def ubar} and Lemma \ref{energysmall} imply that
\[
|Y(\ubar)| \leq K \frac{\e^2}{\l} \, \, \text{and}\, \, |\eta'(\ubar) - \eta'(\util)| \leq \delta_1.
\]
Hence, thanks to the the Proposition \ref{inside}, we obtain
\begin{align*}
\mathcal{S}_{\e, \delta_1} (\ubar) &= -\frac{|Y(\ubar)|^2}{\e \delta_1} +   B_I(\ubar) + \delta_0 \frac{\e}{\l}\left|B_I(\ubar)\right| - \left(1-\delta_1 \frac{\e}{\l} \right)G_0(\ubar) - (1-\delta_1) D(\ubar)\\
& \leq 0.
\end{align*}
Recall that the good term $G$ in Lemma \ref{XYBG} can be decomposed into two good terms $G_0$ and $D$ in \eqref{BI}, that is, $G = G_0 + D$.
Now, we will compare values of good terms between the original variable $u$ and the truncated variable $\ubar$.
\begin{lemma}\label{G monotonicity}
Under the assumptions in Lemma \ref{energysmall}, the followings are true.
\begin{align*}
&0 \leq G_0(u) - G_0(\ubar) \leq G_0(u) \leq C\frac{\e^2}{\l},\\
&D_1(u) - D_1(\ubar) = \intO a \m (u) |\partial_\xi(\eta'(u) - \eta'(\ubar))|^2 \,d\xi\,dx',\\
&D_2(u) - D_2(\ubar) = \sum_{i=2}^3 \intO a \m (u) |\partial_{x_i}(\eta'(u) - \eta'(\ubar))|^2 \,d\xi\,dx'.
\end{align*}
Consequently, we have
\[
    0 \leq D_i(u) - D_i(\ubar) \leq D_i(u),\, \,   \forall i=1,2, \quad \text{and} \quad  0 \leq D(u) - D(\ubar) \leq D(u).
\]
\end{lemma}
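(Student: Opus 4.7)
The plan is to exploit two structural facts about the truncation $\ubar$: since $\psi_{\delta_1}$ is monotone and $\eta'$ is strictly increasing, $\ubar$ lies pointwise between $\util$ and $u$ (either $\util \le \ubar \le u$ or $u \le \ubar \le \util$); and the spatial domain splits into the set $\mathcal{A} := \{|\eta'(u)-\eta'(\util)| \le \delta_1\}$, on which the truncation is inactive (so $\ubar = u$ and $\mu(\ubar) = \mu(u)$), and its complement $\mathcal{A}^c$, on which $\eta'(\ubar)-\eta'(\util) = \pm \delta_1$ is locally constant.

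For the bound on $G_0$, the pointwise ordering together with Lemma \ref{entropy ineq}(2) yields $\eta(\ubar|\util) \le \eta(u|\util)$ pointwise. Integrating against the nonnegative weight $\sigma a'\,d\xi\,dx'$ gives $0 \le G_0(u) - G_0(\ubar) \le G_0(u)$, and the upper bound $G_0(u) \le C\e^2/\l$ is immediate from the first estimate in Lemma \ref{energysmall}.

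For the identities on $D_1$ and $D_2$, let $\phi := \eta'(u)-\eta'(\util)$, which belongs to $H^1(\Omega)$ for a.e.\ $t$ by the assumption $u \in \mathcal{S}$. The Sobolev chain rule for truncations yields, almost everywhere in $\Omega$,
\[
\nabla(\eta'(\ubar)-\eta'(\util)) = \nabla \psi_{\delta_1}(\phi) = \nabla\phi\cdot\mathbf{1}_{\mathcal{A}}, \qquad \nabla(\eta'(u)-\eta'(\ubar)) = \nabla\phi\cdot\mathbf{1}_{\mathcal{A}^c},
\]
so these two gradients have disjoint supports. Combining this orthogonality with $\mu(\ubar)=\mu(u)$ on $\mathcal{A}$, direct computation gives $D_1(\ubar) = \int_{\mathcal{A}} a\,\mu(u)\,|\partial_\xi\phi|^2\,d\xi\,dx'$ and an analogous identity for $D_2$, whence
\[
D_1(u)-D_1(\ubar) = \int_{\mathcal{A}^c} a\,\mu(u)\,|\partial_\xi\phi|^2\,d\xi\,dx' = \intO a\,\mu(u)\,|\partial_\xi(\eta'(u)-\eta'(\ubar))|^2\,d\xi\,dx',
\]
and the same argument applied componentwise in the transverse directions delivers the stated identity for $D_2$. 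Non-negativity of the integrands then gives $0 \le D_i(u) - D_i(\ubar) \le D_i(u)$ for $i=1,2$, and the statement for $D = D_1 + D_2$ follows by summation.

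The only step requiring justification beyond elementary pointwise manipulation is the a.e.\ identity $\nabla \psi_{\delta_1}(\phi) = \nabla\phi\cdot\mathbf{1}_{\mathcal{A}}$ for $\phi \in H^1(\Omega)$, which is a standard Stampacchia-type truncation fact (the weak gradient of a Sobolev function vanishes almost everywhere on any level set). Since this technique is classical and the rest of the argument parallels the 1D analogue, the main obstacle is purely bookkeeping: keeping track of which factor ($\mu(u)$ versus $\mu(\ubar)$) appears on which region and recognizing that the set where the two disagree contributes nothing to the gradient identities.
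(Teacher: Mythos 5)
Your proof is correct and follows essentially the same route as the paper (which invokes the truncation identity $|\eta'(u)-\eta'(\ubar)|=(|\eta'(u)-\eta'(\util)|-\delta_1)_+$ and refers to \cite{Kang19,KV21} for the details you spell out). The $\mathcal{A}/\mathcal{A}^c$ splitting, the monotonicity via Lemma~\ref{entropy ineq}(2), and the Stampacchia-type chain rule are precisely the ingredients the cited references use; you have correctly handled the only delicate points (that $\mu(\ubar)=\mu(u)$ on $\mathcal{A}$ because $\ubar=u$ there, and that the $D_1(\ubar)$-integrand vanishes on $\mathcal{A}^c$ since $\eta'(\ubar)-\eta'(\util)$ is locally constant).
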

\begin{proof}
Lemma \ref{G monotonicity} follows easily from the identity:
\begin{equation}\label{trunc identity}
    \begin{aligned}
|\eta'(u) - \eta'(\ubar)| &= |(\eta'(u) - \eta'(\util)) + (\eta'(\util) - \eta'(\ubar))|\\
&= |(\psi_{\delta_1} - I)(\eta'(u) - \eta'(\util))|\\
&= (|\eta'(u) - \eta'(\util)| - \delta_1)_+.
    \end{aligned}
\end{equation}
Thus, we skip the proof. (Details of the proof can be found in \cite{Kang19, KV21}.)
\end{proof}

For bad terms of $B$ in Lemma \ref{XYBG}, we define $B_O$ as 
\[
B_O := B - B_I, 
\]
so that $B$ is decomposed into the terms $B_I$ and $B_O$.
We now state the following proposition.
\begin{prop}\label{outside}
    There exists constants $\delta_0, C, C^* > 0$(In particular, $C$ depends on the constant $\delta_1$ in Proposition \ref{mainprop}) such that for any $\delta_0^{-1}\e < \l < \delta_0$, the following statements hold.
    \begin{enumerate}
    \item[1.] For any $u$ such that $|Y(u)| \leq \e^2$,
    \begin{align}
        \begin{split}\label{B out}
        &|B_I(u) - B_I(\ubar)| + |B_O(\ubar)| \leq C\delta_0^{1/2} D(u) + C\frac{\e}{\l}\delta_0 G_0(u),
        \end{split}\\
        \begin{split}\label{B total estimate}
        &|B(u)| \leq C\delta_0^{1/2} D(u) + C^* \frac{\e^2}{\l} .
        \end{split}
    \end{align}

    \item[2.] For any $u$ such that $|Y(u)| \leq \e^2$ and $D(u) \leq \frac{C^*}{4}\frac{\e^2}{\l}$,
    \begin{equation}\label{Y out}
    |Y(u) - Y(\ubar)|^2 \leq C\e \left(\d_0^2 D(u)  + \d_0^{5/2}\frac{\e}{\l}G_0(u) \right).
    \end{equation}
    \end{enumerate}
\end{prop}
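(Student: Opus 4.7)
I will follow the structure of \cite[Proposition 4.5]{Kang19}, using the pointwise truncation identity
\begin{equation*}
|\eta'(u)-\eta'(\ubar)| \;=\; \bigl(|\eta'(u)-\eta'(\util)|-\delta_1\bigr)_+
\end{equation*}
to localize each difference $B_I(u)-B_I(\ubar)$ onto the ``outside'' set $\{|\eta'(u)-\eta'(\util)|\geq\delta_1\}$, where hypothesis (A2) applies. On this set, (A2)(i)--(v) dominate the integrands of $B_I$ by combinations of $|\eta'(u)-\eta'(\util)|$ and $|\eta'(u)-\eta'(\util)|^2$, split according to $\{|u|\leq 2\theta\}$ versus $\{|u|>2\theta\}$. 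Young's inequality then distributes a small piece of order $\delta_0^{1/2}$ to $D(u)$, a small piece of order $\delta_0(\e/\l)$ to $G_0(u)$, and leaves residuals absorbed via $|\util'|\leq C(\e/\l)a'$ and $|\util''|\leq C\e|\util'|$ from Lemma \ref{shock property}. The total bound \eqref{B total estimate} then follows by adding to \eqref{B out} the routine estimate $|B_I(\ubar)|+|B_O(\ubar)|\leq C\e^2/\l$, obtained by Taylor expansion at $\util$ together with the $L^\infty$ control $|\eta'(\ubar)-\eta'(\util)|\leq\delta_1$ and the $a'$-weighted energy bound from Lemma \ref{energysmall}.

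\textbf{Treating the parabolic terms of $B_O$.} The cross term $\int a'\mu(\ubar)(\eta'(\ubar)-\eta'(\util))\partial_\xi(\eta'(\ubar)-\eta'(\util))$ yields, after Cauchy-Schwarz and Young, a chunk of size $\delta_0^{1/2}D_1(u)$ plus a pointwise quadratic in $\eta'(\ubar)-\eta'(\util)$ that is absorbed by the exponential tail of $a'\sim(\l/\e)|\util'|$ and Lemma \ref{energysmall}. The two terms involving $(\mu(u)-\mu(\util))\util'$ are handled the same way after invoking (A2)(iii) to bound $|\mu(u)-\mu(\util)|\leq C|\eta'(u)-\eta'(\util)|$, while the $\util''$ term is killed outright by the extra $\e$-factor in $|\util''|\leq C\e|\util'|$ combined with the already-small $a'$-weight.

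\textbf{Main obstacle: the transversal bad term.} The hardest step will be a bad term reducing, on the outside set, to the $L^2$-energy $\int_{\bbr}a'\int_{\TT^2}(\eta'(u)-\eta'(\util))^2\,dx'\,d\xi$, which cannot be absorbed by $D_1$ alone. I plan to split $\eta'(u)-\eta'(\util)$ into its $x'$-average plus its deviation: the deviation is absorbed by the Poincar\'e inequality on $\mathbb{T}^2$, feeding into $D_2(u)$ with coefficient of order $\delta_0^{1/2}$ after the $\e^2/\l^2$ Jacobian scaling. The residue is the weighted squared transversal average
\begin{equation*}
\int_{\bbr} a'(\xi)\left(\int_{\TT^2}(\eta'(u)-\eta'(\util))\,dx'\right)^{\!2} d\xi,
\end{equation*}
which I will bound by deriving a pointwise-in-$\xi$ estimate on $\int_{\TT^2}(\eta'(u)-\eta'(\util))\,dx'$: I integrate its $\xi$-derivative from a reference point $\xi_0$ at which the transversal $L^2$-energy is small, where existence of such $\xi_0$ follows by pigeonhole from the global cap $\int a'\eta(u|\util)\,d\xi dx'\leq C\e^2/\l$ of Lemma \ref{energysmall}. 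Integrating the resulting pointwise bound against $a'$ on $\bbr$ yields the announced $(\e/\l)G_0(u)$-type contribution at the desired $\delta_0$-order.

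\textbf{The estimate \eqref{Y out}.} Writing
\begin{equation*}
Y(u)-Y(\ubar) \;=\; -\intO a'\bigl[\eta(u|\util)-\eta(\ubar|\util)\bigr]\,d\xi dx' \;+\; \intO a\,\partial_\xi\eta'(\util)\,(u-\ubar)\,d\xi dx',
\end{equation*}
the first integral is supported on the outside set, where (A2)(iv) together with the truncation identity yields $|\eta(u|\util)-\eta(\ubar|\util)|\leq C|\eta'(u)-\eta'(\ubar)|$; the second uses $|\partial_\xi\eta'(\util)|\leq C\e(\e/\l)a'$ combined with $|u-\ubar|\leq\alpha^{-1}|\eta'(u)-\eta'(\ubar)|$ from (A1). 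Squaring via Cauchy-Schwarz with weight $a'$ (so that $\int a'\,d\xi dx'\lesssim\l$) pulls out the overall prefactor $\e$, after which the remaining factor $\int a'|\eta'(u)-\eta'(\ubar)|^2\,d\xi dx'$ is fed into $D(u)$ and $(\e/\l)G_0(u)$ at the rates $\delta_0^{2}$ and $\delta_0^{5/2}$ respectively; the extra hypothesis $D(u)\leq(C^*/4)\e^2/\l$ of part (2) is precisely what allows the residual cross terms to close at these declared orders, by forcing the outside set to be quantitatively thin.
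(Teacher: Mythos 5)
Your proposal mirrors the paper's strategy for the main estimate quite closely: the truncation identity on the outside set, the use of (A2)(i)--(v) with the split $\{|u|\leq 2\theta\}$ vs.\ $\{|u|>2\theta\}$, the scaling facts $a'=\frac{\l}{\e}|\util'|$ and $|\util''|\leq C\e|\util'|$, and --- crucially --- the treatment of the transversal bad term $\int a'\big(\int_{\TT^2}(\eta'(u)-\eta'(\util))\,dx'\big)^2 d\xi$ by peeling off the $x'$-deviation into $D_2$ via the Poincar\'e inequality on $\TT^2$, then controlling the transversal average pointwise through a reference slice $\xi_0\in[-1/\e,1/\e]$ obtained by pigeonhole from $\int a'\eta(u|\util)\lesssim\e^2/\l$. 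That is exactly Lemma \ref{outside lemma} and its Steps 1--7, so for \eqref{B out} and \eqref{B total estimate} you are on the same track as the paper (modulo the paper's typo: \eqref{B out} should read $|B_O(u)|$, as its proof and the proof of \eqref{B total estimate} both use, rather than $|B_O(\ubar)|$).

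There is, however, a genuine gap in your squaring step for \eqref{Y out}. Cauchy--Schwarz with weight $a'$ only pulls out $\int_\Omega a'\,d\xi\,dx'=\l$, not a factor $\e$, so after plugging in the linear estimate $\int a'|\eta'(u)-\eta'(\ubar)|^2\lesssim \e\l D_2+\sqrt{\e/\l}\,D_1+(\e/\l)^2 G_0$ you would be left with contributions $\l\cdot\sqrt{\e/\l}\,D_1=\sqrt{\e\l}\,D_1$ and $\l\cdot(\e/\l)^2G_0=(\e^2/\l)G_0$. Neither is small enough: since $\l>\delta_0^{-1}\e$ one has $\sqrt{\e\l}\gg \e\delta_0^2$, and $\e^2/\l$ carries no $\delta_0^{5/2}$. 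You then invoke the hypothesis $D(u)\leq\frac{C^*}{4}\frac{\e^2}{\l}$ as a deus ex machina (``residual cross terms close''), but it does not rescue a pure $a'$-weighted Cauchy--Schwarz. The paper instead writes $|Y(u)-Y(\ubar)|^2=|Y(u)-Y(\ubar)|\cdot|Y(u)-Y(\ubar)|$, uses the hypothesis together with $G_0(u)\lesssim\e^2/\l$ (Lemma \ref{energysmall}) to bound \emph{one} scalar factor by $C\frac{\e^2}{\l}\sqrt{\e/\l}$, and multiplies that scalar against the other factor's expression in $D_1,D_2,G_0$. That is precisely where the declared prefactors $\e\delta_0^2$ and $\e\delta_0^{5/2}\frac{\e}{\l}$ come from, and it is what you would need to carry out in place of the weighted Cauchy--Schwarz.
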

To prove Proposition \ref{outside}, we first show the following estimates which are related to control our bad terms.
\begin{lemma}\label{outside lemma}
Under the same assumption as in Proposition \ref{outside}, for any $u$ such that $|Y(u)| \leq \e^2$, the followings hold:
\begin{align}
    \begin{split}\label{main outside estimate}
        &\intO a' |\eta'(u) - \eta'(\ubar)|^2\,d\xi\,dx' + \intO a' |\eta'(u) - \eta'(\ubar)|\,d\xi\,dx' \\
    & \quad \quad \leq C\e \l D_2(u) + C \sqrt{\frac{\e}{\l}} D_1(u) + C\left(\frac{\e}{\l}\right)^2 \intO a' \eta(u|\util)\,d\xi \, dx',
    \end{split}\\
    \begin{split}\label{lin bad term}
        \intO a' |u - \ubar|\,d\xi\,dx' + \leq C\e \l D_2(u) + C \sqrt{\frac{\e}{\l}} D_1(u) + C\left(\frac{\e}{\l}\right)^2 \intO a' \eta(u|\util)\,d\xi \, dx'.
    \end{split}
    \end{align}
\end{lemma}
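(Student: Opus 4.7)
First, I set $\phi := \eta'(u) - \eta'(\util)$ and observe that by \eqref{trunc identity}, $|\eta'(u)-\eta'(\ubar)| = (|\phi|-\delta_1)_+$ vanishes off the set $E := \{|\phi|\geq\delta_1\}$, and satisfies both $(|\phi|-\delta_1)_+ \leq |\phi|\mathbf{1}_E$ and $(|\phi|-\delta_1)_+ \leq \delta_1^{-1}|\phi|^2 \mathbf{1}_E$. Hence both left-hand-side integrals in \eqref{main outside estimate} reduce, up to a $\delta_1$-dependent constant, to controlling $\intO a' |\phi|^2\mathbf{1}_E\,d\xi dx'$; the second inequality \eqref{lin bad term} then follows from the first via $|u-\ubar| \leq \alpha^{-1}|\eta'(u)-\eta'(\ubar)|$, a consequence of (A1).

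To control $\intO a'|\phi|^2\mathbf{1}_E$, I decompose $\phi(\xi,x') = \bar\phi(\xi) + \tilde\phi(\xi,x')$ with $\bar\phi(\xi) := \int_{\mathbb{T}^2}\phi\,dx'$. For the oscillatory piece, the Poincar\'e inequality on $\mathbb{T}^2$ gives $\int_{\mathbb{T}^2}|\tilde\phi|^2\,dx' \leq (4\pi^2)^{-1}\sum_{i=2}^3 \int_{\mathbb{T}^2}|\partial_{x_i}\phi|^2\,dx'$; combined with the bound $a' = -(\l/\e)\util' \leq C\e\l$ from Lemma \ref{shock property}, this produces $\intO a'|\tilde\phi|^2 \leq C\e\l D_2(u)$, which accounts for the first term on the right-hand side of \eqref{main outside estimate}.

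For the zero-mode $\bar\phi$, I will extract a reference point $\xi_0$ where $|\bar\phi(\xi_0)|$ is small. From Lemma \ref{energysmall}, $\intO a' \eta(u|\util) \leq C\e^2/\l$; since $a' \geq c\e\l$ on the interval $[-1/\e,1/\e]$ of length $2/\e$ by Lemma \ref{shock property}, a mean-value argument yields $\xi_0 \in [-1/\e,1/\e]$ with $\int_{\mathbb{T}^2}\eta(u|\util)(\xi_0,x')\,dx' \leq C\e^2/\l^2$, hence $|\bar\phi(\xi_0)| \leq C\e/\l$ by Jensen and (A1). Cauchy--Schwarz together with Jensen's bound $\|\bar\phi'\|_{L^2(\RR)}^2 \leq CD_1(u)$ then deliver the pointwise propagation estimate $|\bar\phi(\xi)|^2 \leq C(\e/\l)^2 + C|\xi-\xi_0|D_1(u)$.

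The main obstacle will be to integrate this pointwise estimate against $a'$ while recovering the sharp $\sqrt{\e/\l}$-coefficient of $D_1$. A direct integration, using $\|a'\|_{L^1} = \l$ together with $\int_\RR a' |\xi-\xi_0|\,d\xi \leq C\l/\e$ (from the exponential decay in Lemma \ref{shock property}), yields only $\intO a'\bar\phi^2 \leq C\e^2/\l + C\l D_1/\e$, whose $D_1$-coefficient $\l/\e$ is far too large. The remedy is to restrict attention to the sublevel set $\{\xi:|\bar\phi(\xi)|\geq \delta_1/2\}$: elsewhere $|\phi|^2 \mathbf{1}_E \leq 4|\tilde\phi|^2$ is already absorbed by the $D_2$-term, while on this sublevel set the pointwise bound forces $|\xi-\xi_0|\gtrsim \delta_1^2/D_1$, and the exponential decay of $a'$ then produces a factor in $\e\delta_1^2/D_1$ which, after a Young-type balancing with the reference-point contribution $(\e/\l)^2$, yields exactly $C\sqrt{\e/\l}D_1 + C(\e/\l)^2\intO a'\eta(u|\util)$. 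This three-way interplay between reference-point smallness, the exponential weight of $a'$, and the $\delta_1$-threshold is the technical heart of the argument.
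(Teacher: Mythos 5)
Your setup matches the paper's strategy through the propagation estimate: the reduction to the quadratic quantity via $(y-\delta_1)_+ \leq \delta_1^{-1} y^2\mathbf{1}_{\{y\geq\delta_1\}}$, the split of the transversal oscillation into the $D_2$ term by Poincar\'e on $\mathbb{T}^2$, the mean-value selection of $\xi_0\in[-1/\e,1/\e]$ with $\int_{\mathbb{T}^2}\eta(u|\tiu)(\xi_0)\,dx'\lesssim(\e/\l)^2$, and the Cauchy--Schwarz propagation bound $|\bar\phi(\xi)|^2\lesssim(\e/\l)^2+|\xi-\xi_0|D_1$ are all in the paper in essentially the form you describe.

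The finishing move, however, does not close, and the reason is instructive. Your pointwise bound shows that $S:=\{\xi:|\bar\phi(\xi)|\geq\delta_1/2\}$ is contained in $\{|\xi-\xi_0|\gtrsim L\}$ with $L=c\delta_1^2/D_1$, and you propose to win by the exponential decay of $a'$ on this set together with a Young balancing. But track the intermediate regime $D_1\sim\e\delta_1^2$, so $\e L\sim1$: then $a'$ has not yet decayed, and the contribution
\[
D_1\int_{|\xi-\xi_0|\geq L} a'\,|\xi-\xi_0|\,d\xi \;\lesssim\; D_1\,\frac{\l}{\e}\,(1+\e L)e^{-c\e L}\;\sim\;\frac{\l}{\e}\,D_1,
\]
which has coefficient $\l/\e$, not $\sqrt{\e/\l}$. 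Since $\l/\e>\delta_0^{-1}\gg1$ while $\sqrt{\e/\l}<\sqrt{\delta_0}\ll1$, this is off by a factor $(\l/\e)^{3/2}$. The propagation constraint plus exponential decay simply cannot see the small parameter $\e/\l$ in this regime, and the vaguely-described Young balancing does not produce the $(\e/\l)^2\intO a'\eta(u|\tiu)$ term either.

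The missing ingredient is the entropy lower bound on the bad set. On $S$ one has $\int_{\mathbb{T}^2}|\phi|\,dx'\geq\delta_1/2$, and via (A2)(i) (split $|u|\lessgtr 4|u_-|$) this forces $\int_{\mathbb{T}^2}\eta(u|\tiu)\,dx'\geq c\,\delta_1^2$, hence $\mathbf{1}_S(\xi)\leq C\delta_1^{-2}\int_{\mathbb{T}^2}\eta(u|\tiu)(\xi,\cdot)\,dx'$. This converts the \emph{measure} of $S$ weighted by $a'$ into $C\delta_1^{-2}\intO a'\eta(u|\tiu)\lesssim\delta_1^{-2}\e^2/\l$ (Lemma \ref{energysmall}). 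The paper (Steps 4--7) then splits the $\xi$-integral at $|\xi|\sim\frac{1}{\e}\sqrt{\l/\e}$: in the inner window the factor $|\xi|+1/\e\lesssim\frac{1}{\e}\sqrt{\l/\e}$ multiplies the small measure of $S$, giving $\sqrt{\e/\l}$; in the outer window $a'\sim\e\l e^{-c\e|\xi|}$ alone is $\lesssim\sqrt{\e/\l}$. The zeroth-order piece $(\e/\l)^2\int a'\mathbf{1}_S\lesssim(\e/\l)^2\intO a'\eta(u|\tiu)$ then comes for free. Without this step you cannot control $\int a'\mathbf{1}_S$ and the argument fails. I'd also note the paper works directly with $\wbar=\eta'(u)-\eta'(\ubar)$ and inserts $\mathbf{1}_{|\wbar|>0}$ pointwise in $(\xi,x')$ into the squared average, which is slightly cleaner than your $\bar\phi$-sublevel set since the entropy bound is then purely pointwise; but either version works once the entropy lower bound is in place.
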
 
\begin{proof}
$\bullet$ Proof of \eqref{main outside estimate}:
First, observe that $(y-\delta_{1}/2)_+ \geq \delta_{1}/2$ whenever $(y-\delta_1)_+ > 0$. So, we have
\[
(y-\delta_1)_+  \leq (y-\delta_{1}/2)_+ \mathbf{1}_{\{y-\delta_1 > 0\}} \leq (y-\delta_{1}/2)_+  \left(\frac{(y-\delta_{1}/2)_+ }{\delta_1/2} \right) \leq \frac{2}{\delta_1}(y-\delta_{1}/2)_+ ^2.
\]
Taking $y = |\eta'(u) - \eta'(\util)|$, we have
\begin{equation}\label{Nonlinearization}
\begin{aligned}
    |\eta'(u) - \eta'(\ubar_{\delta_1})| &= (|\eta'(u) - \eta'(\util)|-\delta_1)_+ \\
    &\leq \frac{2}{\delta_1}(|\eta'(u) - \eta'(\util)|-\delta_1/2)_+^2\\
    &= \frac{2}{\delta_1}|\eta'(u) - \eta'(\ubar_{\delta_1/2})|^2 .
\end{aligned}
\end{equation}
Moreover, note that 
\begin{equation}\label{trunc mon}
\begin{aligned}
    |\eta'(u) - \eta'(\ubar_{\delta_1})| &= (|\eta'(u) - \eta'(\util)|-\delta_1)_+ \\
    &\leq (|\eta'(u) - \eta'(\util)|-\delta_1/2)_+\\
    &=|\eta'(u) - \eta'(\ubar_{\delta_1/2})|.
\end{aligned}
\end{equation}
Thus, in order to show \eqref{main outside estimate}, it is enough to consider the quadratic part with $\ubar$ defined with $\delta_1/2$ instead of $\delta_1$. We will keep this notation $\ubar = \ubar_{\delta_1/2}$ for this case below.\\
\textit{Step 1)} From Lemma \ref{shock property} and Lemma \ref{energysmall}, we have
\begin{align*}
2\e \int_{-1/\e}^{1/\e} \intT \eta(u|\util) \,dx'\,d\xi & \leq \frac{2\e}{\inf\limits_{\xi \in [-1/\e, 1/\e]} a'} \intRT a'\eta(u|\util)\,d\xi\,dx' \leq C \frac{\e}{\l\e} \frac{\e^2}{\l} = C\left( \frac{\e}{\l}\right)^2.
\end{align*}
By the mean value theorem, there exists $\xi_0 \in [-1/\e, 1/\e]$ such that
\[
\intT \eta(u|\util)(\xi_0) \,dx' \leq C \left(\frac{\e}{\l}\right)^2.
\]
Moreover, note that
\begin{equation}\label{shock unif bound}
    |\util| \leq |\util - u_-| + |u_-| \leq \e + |u_-| \leq \d_0 + |u_-| \leq 2|u_-| \quad \text{for } \d_0 \, \, \text{small enough}.
\end{equation}
Thanks to \eqref{shock unif bound}, we use the entropy hypothesis (i) of (A2) with $\theta = 2|u_-|$ which yields
\begin{align*}
&\intT |\eta'(u) - \eta'(\util)| (\xi_0)\,dx'\\
&\quad = \intT |\eta'(u) - \eta'(\util)|\mathbf{1}_{|u| \leq 4|u_-|} (\xi_0)\,dx' + \intT |\eta'(u) - \eta'(\util)|\mathbf{1}_{|u| > 4|u_-|} (\xi_0)\,dx'\\
&\quad \leq \sqrt{ \intT |\eta'(u) - \eta'(\util)|^2\mathbf{1}_{|u| \leq 4|u_-|} (\xi_0)\,dx'} + C\intT \eta(u|\util)(\xi_0) \,dx'\\
&\quad \leq C \sqrt{\intT \eta(u|\util)(\xi_0) \,dx'} + C\intT \eta(u|\util)(\xi_0) \,dx'\\
&\quad \leq C\frac{\e}{\l}.
\end{align*}
For convenience, we denote $\eta'(u) - \eta'(\util) =: \wtil$, and $\eta'(u) - \eta'(\ubar) =: \wbar$. Then, the above inequality is equivalent to
\[
\intT |\wtil|(\xi_0)\,dx' \leq C \frac{\e}{\l}.
\]
\textit{Step 2)} From \eqref{trunc identity}, we have
\[
|\wbar| = |\eta'(u) - \eta'(\ubar)| = (|\eta'(u) - \eta'(\util)| - \delta_1)_+ \leq |\eta'(u) - \eta'(\util)| = |\wtil|.
\]
This implies that
\begin{equation}\label{reference point estimate}
    \intT |\wbar|(\xi_0)\,dx' \leq \intT |\wtil|(\xi_0)\,dx' \leq C \frac{\e}{\l}.
\end{equation}
\textit{Step 3)} Now, we estimate $\displaystyle{\intRT a'\wbar^2 \, d\xi \, dx'}$. First, observe from Lemma \ref{shock property} that 
\begin{equation}\label{a der bound}
    \| a' \|_{L^\infty(\mathbb{R})}\leq C \l \e.
\end{equation}
Using \eqref{a der bound}, Lemma \ref{G monotonicity} and Poincar\'e inequality, we obtain
\beq\label{worstbad}
\begin{aligned}
\intO a'\wbar^2 \, d\xi \, dx' &= \intRT a'\wbar^2 \, d\xi \, dx'  - \intR a' \left(\intT \wbar \,dx' \right)^2\,d\xi +  \intR a' \left(\intT \wbar \,dx' \right)^2\,d\xi\\
&= \intR a' \left(\intT \wbar^2 \,dx' - \left(\intT \wbar \,dx' \right)^2\right)\,d\xi +  \intR a' \left(\intT \wbar \,dx' \right)^2\,d\xi\\
& \leq C \sum_{i=2}^3 \intR a' \intT |\wbar_{x_i}|^2\,dx' \,d\xi + \intR a' \left(\intT \wbar \,dx' \right)^2\,d\xi\\
& \leq C\e \l \sum_{i=2}^3 \intRT |\wbar_{x_i}|^2\,d\xi\,dx' +\intR a' \left(\intT \wbar \,dx' \right)^2\,d\xi\\
&\leq C\e \l D_2(u) + \intR a' \left(\intT \wbar \,dx' \right)^2\,d\xi.
\end{aligned}
\eeq
\textit{Step 4)} Now, we add the characteristic function $\mathbf{1}_{|\wbar|>0}$ into the integrand. Note that
\begin{align*}
&\intR a' \left(\intT \wbar \,dx' \right)^2\,d\xi = \intR a' \left(\intT \mathbf{1}_{|\wbar|>0} \wbar \,dx'\right)^2\,d\xi \\
&\qquad \leq \intR a' \left(\intT \mathbf{1}_{|\wbar|>0} \,dx'\right) \left(\intT \wbar^2 \,dx' \right)\,d\xi\\
&\qquad = \intR a' \left(\intT \mathbf{1}_{|\wbar|>0} \,dx'\right) \left(\intT \wbar^2 \,dx' - \left(\intT \wbar\,dx' \right)^2+ \left(\intT \wbar\,dx' \right)^2\right)\,d\xi.
\end{align*}
From \eqref{a der bound}, Lemma \ref{G monotonicity}, Poincar\'e inequality, and $\displaystyle{\intT \mathbf{1}_{|\wbar|>0}\,dx' \leq 1}$, we have
\begin{align*}
& \intR a' \left(\intT \wbar \,dx' \right)^2\,d\xi\\
&\leq C\intR a' \left(\intT \mathbf{1}_{|\wbar|>0} dx'\right)\left(\sum_{i=2}^3 \intT |\wbar_{x_i}|^2 dx'\right)d\xi  + \intR a' \left(\intT \mathbf{1}_{|\wbar|>0} dx'\right)\left(\intT \wbar dx'\right)^2 d\xi\\
& \leq C\e\l \sum_{i=2}^3 \intRT |\wbar_{x_i}|^2\,d\xi\,dx' + \intR a' \left(\intT \mathbf{1}_{|\wbar|>0}\,dx'\right) \left(\intT \wbar \,dx' \right)^2\,d\xi \\
& \leq  C\e \l D_2(u)  + \intR a' \left(\intT \mathbf{1}_{|\wbar|>0}\,dx'\right) \left(\intT \wbar \,dx' \right)^2\,d\xi .
\end{align*}
\textit{Step 5)} In this step, we estimate $\displaystyle{\intT \wbar \,dx'}$. For that, we use \eqref{reference point estimate} to have
\beq\label{pwout}
\begin{aligned}
\intT \wbar(\xi,x') \,dx' &= \intT \wbar(\xi,x') \,dx' - \intT \wbar(\xi_0,x') \,dx'  + \intT \wbar(\xi_0,x') \,dx'\\
&\leq \intT \int_{\xi_0}^{\xi} \wbar_{\zeta}(\zeta,x')\,d\zeta \,dx' + C\frac{\e}{\l}\\
&\leq \intT \sqrt{|\xi - \xi_0|}\sqrt{\intR |\wbar_{\zeta}(\zeta,x')|^2\,d\zeta} + C\frac{\e}{\l}\\
&\leq  \sqrt{|\xi| + \frac{1}{\e}}\intT \sqrt{\intR |\wbar_{\zeta}(\zeta,x')|^2\,d\zeta}\,dx' + C\frac{\e}{\l}.
\end{aligned}
\eeq
\textit{Step 6)} Thus, we obtain
\begin{align*}
&\intR a' \left(\intT \mathbf{1}_{|\wbar|>0}\,dx'\right) \left(\intT \wbar \,dx' \right)^2\,d\xi \\
&\quad \leq \intR a' \left(\intT \mathbf{1}_{|\wbar|>0}\,dx'\right) \left(2\left(|\xi|+ \frac{1}{\e}\right)\intT \intR |\wbar_{\zeta}(\zeta,x')|^2\,d\zeta\,dx'+2C\left(\frac{\e}{\l}\right)^2 \right)\\
&\quad \leq C D_1(u)\intRT a' \left(|\xi|+\frac{1}{\e}\right) \mathbf{1}_{|\wbar|>0}\,d\xi \,dx'  + C \left(\frac{\e}{\l}\right)^2 \intRT a' \mathbf{1}_{|\wbar|>0}\,d\xi \,dx'.
\end{align*}
Combining Step 1 through Step 6, we get
\begin{align*}
    &\intRT a' \wbar^2 \,d\xi\,dx' \\
    &\quad \leq C\e\l D_2(u) + CD_1(u)\intO a' \left(|\xi|+\frac{1}{\e}\right) \mathbf{1}_{|\wbar|>0}\,d\xi \,dx' +  C \left(\frac{\e}{\l}\right)^2 \intO a' \mathbf{1}_{|\wbar|>0}\,d\xi \,dx'.
\end{align*}
\textit{Step 7)} 
To control last two terms in the above inequality, we first observe that the definition of $\ubar$ implies
\[
|\big(\eta'(u) - \eta'(\util)\big)(\xi,x')| > \delta_1, \, \, \text{whenever } |\big(\eta'(u) - \eta'(\ubar)
\big)(\xi,x')| > 0.
\]
Thanks to \eqref{shock unif bound}, we use the entropy hypothesis (i) of (A2) with $\theta = 2|u_-|$, that is,
\[
|\big(\eta'(u) - \eta'(\util)\big)(\xi,x')|^2\mathbf{1}_{|u|\leq 4|u_-|} + |\big(\eta'(u) - \eta'(\util)\big)(\xi,x')|\mathbf{1}_{|u|\geq 4|u_-|}  \leq C \eta(u|\util),
\]
to have
\[
\eta(u|\util) \geq C \delta_1^2, \, \, \text{whenever } (\xi,x') \,\, \text{satisfying }  |\big(\eta'(u) - \eta'(\ubar)\big)(\xi,x')| > 0,
\]
which gives
\begin{equation}\label{ent lower bnd}
    \mathbf{1}_{|\wbar|>0} = \mathbf{1}_{\left|(\eta'(u) - \eta'(\ubar))(\xi,x')\right| > 0} \leq C \delta_1^{-2} \eta(u|\util).
\end{equation}
From \eqref{ent lower bnd}, we obtain
\[
\intRT a' \mathbf{1}_{|\wbar|>0}\,d\xi \,dx' \leq \frac{C}{\delta_1^2} \intRT a' \eta(u|\util)\,d\xi\,dx'.
\]
We now use Lemma \ref{shock property}, Lemma \ref{energysmall} and \eqref{ent lower bnd} to have
\begin{align*}
&\intRT a' \left(|\xi|+\frac{1}{\e}\right) \mathbf{1}_{|\wbar|>0}\,d\xi \,dx' \\
& \quad \leq C \frac{1}{\e}\sqrt{\frac{\l}{\e}} \intT \int_{|\xi| \leq \frac{1}{\e}\sqrt{\frac{\l}{\e}}} a' \mathbf{1}_{|\wbar|>0}\,d\xi \,dx' + C \intT \int_{|\xi| > \frac{1}{\e}\sqrt{\frac{\l}{\e}}} a'(\xi)|\xi| \mathbf{1}_{|\wbar|>0}\,d\xi \,dx' \\
&\quad \leq C\frac{1}{\e}\sqrt{\frac{\l}{\e}}\intRT a' \eta(u|\util)\,d\xi\,dx' + C  \intT \int_{|\xi| > \frac{1}{\e}\sqrt{\frac{\l}{\e}}} \e \l e^{-c\e|\xi|}|\xi|\,d\xi \,dx'\\
&\quad \leq C \sqrt{\frac{\e}{\l}} + C \frac{\l}{\e}\int_{|\xi| >\sqrt{\frac{\l}{\e}}} e^{-c|\xi|}|\xi| \,d\xi.
\end{align*}
As in \cite{Kang19}, taking $\d_0$ small enough such that $|\xi|\leq e^{\frac{c}{2}|\xi|}$ for $\xi \geq \sqrt{\frac{\l}{\e}}$ where $\frac{\e}{\l} \leq \d_0$, we have
\[
    \frac{\l}{\e}\int_{|\xi| >\sqrt{\frac{\l}{\e}}}  e^{-c|\xi|}|\xi| \,d\xi \leq \frac{\l}{\e}\int_{|\xi| >\sqrt{\frac{\l}{\e}}}e^{-\frac{c}{2}|\xi|} \,d\xi \leq \frac{2\l}{c\e}e^{-\frac{c}{2}\sqrt{\frac{\l}{\e}}} \leq \sqrt{\frac{\e}{\l}}.
\]
Thus, we obtain 
\[
    \intO a' \left(|\xi|+\frac{1}{\e}\right) \mathbf{1}_{|\wbar|>0}\,d\xi \,dx' \leq C\sqrt{\frac{\e}{\l}}.
\]
Hence, we get
\begin{equation}\label{sub main outside estimate}
    \begin{aligned}
        &\intO a' \wbar^2 \,d\xi\,dx'\\
&\quad \leq C\e\l D_2(u) + CD_1(u)\intO a' \left(|\xi|+\frac{1}{\e}\right) \mathbf{1}_{|\wbar|>0}\,d\xi \,dx' +  C \left(\frac{\e}{\l}\right)^2 \intO a' \mathbf{1}_{|\wbar|>0}\,d\xi \,dx'\\
&\quad \leq  C\e \l D_2(u) + C \sqrt{\frac{\e}{\l}} D_1(u) + C\left(\frac{\e}{\l}\right)^2 \intO a' \eta(u|\util)\,d\xi \, dx'.
    \end{aligned}
\end{equation}
Now, recall that $\ubar = \ubar_{\d_1/2}$ in the previous steps. From \eqref{sub main outside estimate}, \eqref{Nonlinearization} and \eqref{trunc mon}, we conclude that
\begin{align*}
    &\intO a' |\eta'(u) - \eta'(\ubar_{\d_1})|^2\,d\xi\,dx' + \intO a' |\eta'(u) - \eta'(\ubar_{\d_1})|\,d\xi\,dx' \\
    &\quad \leq C\intO a' |\eta'(u) - \eta'(\ubar_{\d_1/2})|^2\,d\xi\,dx' \\
    &\quad \leq C\e \l D_2(u) + C \sqrt{\frac{\e}{\l}} D_1(u) + C\left(\frac{\e}{\l}\right)^2 \intO a' \eta(u|\util)\,d\xi \, dx'.
\end{align*}
$\bullet$ Proof of \eqref{lin bad term}: Since $\eta'' \geq \alpha$, we have
\begin{align*}
\intO a' |u - \ubar_{\d_1}|\,d\xi\,dx' &\leq \frac{1}{\alpha} \intO a' |\eta'(u) - \eta'(\ubar_{\d_1})|\,d\xi\,dx'\\
&\leq  C\e \l D_2(u) + C \sqrt{\frac{\e}{\l}} D_1(u) + C\left(\frac{\e}{\l}\right)^2 \intO a' \eta(u|\util)\,d\xi \, dx'.
\end{align*}
This completes the proof.
\end{proof}

Now, we present the proof of Proposition \ref{outside}.\vspace{0.5cm}

$\bullet$ \textit{Proof of \eqref{B out}:}
Following the proof of \cite[Proposition 4.5]{Kang19}, we find that
\begin{align*}
    &|B_I(u) - B_I(\ubar)| + |B_O(u)| \\
    &\quad \leq C\left(\intO a' |\eta'(u) - \eta'(\ubar)|^2\,d\xi\,dx' + \intO a' |\eta'(u) - \eta'(\ubar)|\,d\xi\,dx' + \intO a' |u - \ubar|\,d\xi\,dx'\right).
\end{align*}
Therefore, Lemma \ref{outside lemma} implies that 
\[
    |B_I(u) - B_I(\ubar)| + |B_O(u)| \leq C \d_0^{1/2}D(u) + C\frac{\e}{\l}\d_0 G_0(u).
\]
$\bullet$ \textit{Proof of \eqref{B total estimate}:}
Now, we will give a rough estimate for $|B(u)|$. First, we use Taylor expansion to have
\begin{align*}
|B_I(\ubar)| &\leq C \intO a' |\ubar - \util|^2 \,d\xi\,dx'.
\end{align*}
Then, Lemma \ref{entropy ineq}, Lemma \ref{energysmall} and Lemma \ref{G monotonicity} yields that
\begin{equation}\label{BI total}
    \begin{aligned}
        |B_I(\ubar)| \leq C\intO a' \eta(\ubar | \util) \,d\xi\,dx' \leq C \intO a'\eta(u|\util) \,d\xi\,dx' \leq C\frac{\e^2}{\l}.
    \end{aligned}
\end{equation}
From Lemma \ref{G monotonicity}, \eqref{B out} and \eqref{BI total}, we obtain
\begin{align*}
    |B(u)| \leq C\delta_0^{1/2} D(u) + C\frac{\e}{\l}\delta_0 G_0(u) + C\frac{\e^2}{\l} \leq C\delta_0^{1/2} D(u) + C^*\frac{\e^2}{\l}.
\end{align*}
$\bullet$ \textit{Proof of \eqref{Y out}:}
Finally, it remains to estimate $|Y(u)- Y(\ubar)|$. Based on the proof of \cite[Propotision 4.5]{Kang19}, we find that
\begin{align*}
    |Y(u) - Y(\ubar)|\leq \intO a' |\eta'(u) - \eta'(\ubar)| \,d\xi\,dx' + C \intO a' |u-\ubar| \,d\xi\,dx'.
\end{align*}
From Lemma \ref{outside lemma}, we obtain
\[
    |Y(u) - Y(\ubar)| \leq C\left( \e \l D_2(u) +  \sqrt{\frac{\e}{\l}} D_1(u) + \left(\frac{\e}{\l}\right)^2 G_0(u) \right).
\]
Moreover, if $D(u) \leq \frac{C^*}{4}\frac{\e^2}{\l}$, then we have
\begin{align*}
|Y(u) - Y(\ubar)| \leq  C\left( \e \l \frac{\e^2}{\l} +  \sqrt{\frac{\e}{\l}} \frac{\e^2}{\l} + \left(\frac{\e}{\l}\right)^2 \frac{\e^2}{\l} \right) \leq C\frac{\e^2}{\l} \sqrt{\frac{\e}{\l}}.
\end{align*}
Hence, using $\frac{\e}{\l} \leq \d_0$, we conclude that
\begin{align*}
|Y(u) - Y(\ubar)|^2 &\leq C \frac{\e^2}{\l}\left(\frac{\e}{\l} D_1(u) + \sqrt{\frac{\e}{\l}} \e \l D_2(u) + \left(\frac{\e}{\l} \right)^{5/2}G_0(u) \right)\\
&\leq C\e \left(\left(\frac{\e}{\l}\right)^2 D(u)  + \left(\frac{\e}{\l} \right)^{5/2}\frac{\e}{\l}G_0(u) \right)\\
&\leq C\e \left(\d_0^2 D(u)  + \d_0^{5/2}\frac{\e}{\l}G_0(u) \right).
\end{align*}
This completes the proof of Proposition \ref{outside}.\\
\qed
\subsection{Conclusion}
We divide into two cases: whether the diffusion term $D(u)$ is big or small.\\
\textit{Case I:} $D(u) \geq 4 C^*  \frac{\e^2}{\l}$, where the constant $C^*$ is defined in Proposition \ref{outside}. Then, taking $\delta_0$ small enough, we have
\begin{align*}
R(u) &:= -\frac{1}{\e^4}Y^2(u) + B(u) + \delta_0 \frac{\e}{\l}|B(u)| - G_0(u) - (1-\d_0)D(u)\\
&\leq 2|B(u)| - (1-\d_0) D(u) \leq 2C^*\frac{\e^2}{\l} - (1 - \d_0 - 2\delta_0^{1/4})D(u)\\
&\leq 2C^*\frac{\e^2}{\l} - \frac{1}{2}D(u)\\
&\leq 0.
\end{align*}
\textit{Case II:} $D(u) \leq 4C^*\frac{\e^2}{\l}.$\\
Since
\[
|Y(\ubar)|^2 \leq 2(|Y(u)|^2 + |Y(u) - Y(\ubar)|^2),
\]
we have
\[
-2|Y(u)|^2  \leq  -|Y(\ubar)|^2  + 2|Y(u) - Y(\ubar)|^2.
\]
Using Lemma \ref{G monotonicity} and $\e <\d_1$, we obtain
\begin{align*}
R(u) &\leq -\frac{2}{\e \delta_1}|Y(u)|^2 + B(u) + \delta_0 \frac{\e}{\l}|B(u)| - G_0(u) - (1-\d_0)D(u)\\
&\leq -\frac{|Y(\ubar)|^2}{\e \delta_1} +  B_I(\ubar) + \delta_0 \frac{\e}{\l}\left|B_I(\ubar)\right| - \left(1-\delta_1 \frac{\e}{\l} \right)G_0(\ubar) - (1-\delta_1) D(\ubar)\\
& \quad + \underbrace{\frac{2}{\e \delta_1} |Y(u) - Y(\ubar)|^2}_{=:J_1} + \underbrace{\left(1+\delta_0 \frac{\e}{\l}\right) \left(|B_I(u) - B_I(\ubar)| + |B_O(u)| \right)}_{=J_2}\\
&\quad -\delta_1 \frac{\e}{\l} G_0(u) - (\delta_1 - \d_0) D(u).
\end{align*}
We claim that
\[
J_1 + J_2 \leq \delta_1 \frac{\e}{\l} G_0(u) + \frac{\d_1}{2} D(u).
\]
Indeed, it follows from Proposition \ref{outside} that if we choose $\delta_0$ small enough such that $\delta_0 < \delta_1^{4}$, and for any $\e/\l < \delta_0$, we have
\begin{align*}
J_1 &= \frac{2}{\e \delta_1} |Y(u) - Y(\ubar)|^2 \leq \frac{C}{\d_1}\left(\d_0^2 D(u)  + \d_0^{5/2}\frac{\e}{\l}G_0(u) \right) \leq \frac{\d_1}{4} D(u) + \frac{1}{2}\delta_1\frac{\e}{\l}G_0(u),
\end{align*}
and
\begin{align*}
J_2 
\leq C\left(\delta_0^{1/2} D(u) + C\frac{\e}{\l}\delta_0 G_0(u)\right) \leq \frac{\d_1}{4} D(u) + \frac{1}{2}\delta_1\frac{\e}{\l}G_0(u).
\end{align*}
Thus, we have
\[
J_1 + J_2 \leq \delta_1 \frac{\e}{\l} G_0(u) + \frac{\d_1}{2} D(u).
\]
If we choose $\d_0$ small enough such that $ \d_0 <\frac{\d_1}{2}$, we obtain 
\[
    J_1 + J_2 \leq \delta_1 \frac{\e}{\l} G_0(u) + (\d_1 - \d_0) D(u).
\]
Hence, thanks to Proposition \ref{mainprop}, we conclude that
\begin{align*}
R(u) &\leq -\frac{|Y(\ubar)|^2}{\e \delta_1} +  B_I(\ubar) + \delta_0 \frac{\e}{\l}\left|B_I(\ubar)\right| - \left(1-\delta_1 \frac{\e}{\l} \right)G_0(\ubar) - (1-\delta_1) D(\ubar)\\
&\leq \mathcal{S}_{\e, \delta_1}(\ubar) \leq 0.
\end{align*}
This completes the proof of Proposition \ref{mainprop}.\\
\qed

\appendix
\setcounter{equation}{0}
\section{Proof of Lemma \ref{algebraic}}\label{proof alg}
Here, we present the proof of Lemma \ref{algebraic}. For that, we first introduce the simple inequality on a specific polynomial functional.
\begin{lemma}\label{polynomial ineq}
For all $x \in [-2,0)$,
\[
g(x) := 2x - 2x^2 - \frac{4}{3}x^3 +\frac{7}{5}\theta \big(-x^2 -2x \big)^{3/2} <0,
\]
where $\displaystyle{\theta = \sqrt{5 - \frac{\pi^2}{3}}} \approx 1.308$.
\end{lemma}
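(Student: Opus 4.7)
The plan is to reduce the transcendental inequality to a polynomial one by isolating and squaring the irrational term, then verify positivity of the resulting polynomial on $(0,2]$.

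First, on $(-2,0)$ the two sides to be compared have definite signs. The polynomial part $2x - 2x^2 - \tfrac{4}{3}x^3$ factors as $-\tfrac{2x}{3}(2x^2 + 3x - 3)$; since the quadratic $2x^2 + 3x - 3$ has roots $(-3\pm\sqrt{33})/4$ lying outside $[-2,0]$ and equals $-3$ at $x=0$, it is strictly negative on $[-2,0]$, while $-2x/3>0$ on $(-2,0)$, so the polynomial part is strictly negative there. Also $-x^2 - 2x = -x(x+2) > 0$ on $(-2,0)$. Thus $g(x) < 0$ is equivalent to
\[
\frac{7\theta}{5}(-x^2-2x)^{3/2} < -2x + 2x^2 + \tfrac{4}{3}x^3,
\]
with both sides strictly positive on $(-2,0)$ and both vanishing at $x=0$.

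Squaring, which is permissible by positivity, substituting $u = -x \in (0,2]$ and the identity $\theta^2 = (15-\pi^2)/3$, and dividing out the common factor $u^2$, the inequality reduces to the quartic positivity statement
\[
Q(u) := 100\,(3 + 3u - 2u^2)^2 - 147(15 - \pi^2)\,u(2-u)^3 > 0 \quad \text{for all } u \in (0,2].
\]
The endpoint values $Q(0) = 900$ and $Q(2) = 100$ are explicit and independent of $\pi$. For the interior, I would analyze $Q'(u)$, a cubic whose real roots in $(0,2)$ produce a local minimum of $Q$ near $u\approx 0.35$ and a local maximum further to the right, with $Q$ decreasing thereafter onto its boundary value $Q(2)=100$. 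Hence the global minimum of $Q$ on $[0,2]$ is attained at either the interior local minimizer or at $u = 2$.

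The main obstacle is the final numerical verification that $Q$ is strictly positive at its interior critical point. Since $\theta$ is the sharp constant arising from the identity \eqref{theta def}, the inequality is tight in scale, but for the actual value of $\theta$ there is a comfortable multiplicative slack: any rational upper bound on $\pi^2$ (such as $\pi^2 < 9.87$) renders every coefficient of $Q$ rational, after which positivity can be checked either by direct evaluation at the critical point of $Q'$ via interval arithmetic, or by partitioning $[0,2]$ into a few subintervals on which $Q$ is monotone and checking the boundary values. This parallels the analogue numerical step in \cite[Proposition 3.2]{KV21}, to which the paper defers.
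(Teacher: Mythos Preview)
Your reduction is correct: on $(-2,0)$ the polynomial part of $g$ is negative and the irrational part positive, so squaring is legitimate, and the algebra leading to the quartic $Q(u)$ on $(0,2]$ is accurate. This is a genuinely different route from the paper. The paper never squares; instead it studies $g'$ and $g''$ on three subintervals of $[-2,0)$ to force any interior maximizer $x_*$ into $\big(\tfrac{-3+\sqrt{5}}{2},0\big)$, and then substitutes the critical-point relation $\sqrt{1-(1+x_*)^2}=\tfrac{5}{21\theta}\cdot\tfrac{2-4x_*-4x_*^2}{1+x_*}$ back into $g$. The payoff is that this substitution \emph{eliminates $\theta$ entirely}, yielding $g(x_*)=\tfrac{2x_*(x_*^2+3x_*+1)}{3(1+x_*)}$, whose sign on that interval follows from the exact roots of $x^2+3x+1$. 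Your approach trades this algebraic collapse for a conceptually simpler setup, at the cost of a quartic inequality whose coefficients still carry $\pi^2$.

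That is also where your proof is unfinished: you describe how one would verify $Q>0$ (interval arithmetic after bounding $\pi^2$, or partitioning $[0,2]$) but do not actually carry it out. The margin is ample---$Q$ bottoms out near $260$ around $u\approx 0.35$---so a handful of explicit evaluations would close the argument, but as written it stops one step short. The paper's appendix, incidentally, is self-contained; it cites \cite{KV21} only as a model for the argument, not as a substitute for the final step.
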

\begin{proof}
The proof of Lemma \ref{polynomial ineq} is similar to \cite[Lemma A.1]{KV21}.\\
\textit{Step 1) }In step 1, we will show that $g''(x)> 0$ on $x \in \left[-2, -\frac{1+\sqrt{3}}{2}\right]$.
Note that 
\[
-x^2 - 2x = 1 - (1+x)^2.
\]
Then, we have 
\begin{align*}
    \frac{g'(x)}{2} &= 1-2x - 2x^2 - 2.1 \theta (1+x)\sqrt{1- (1+x)^2},\\
    \frac{g''(x)}{2} &= 2 - 4x - 4.2 \theta \sqrt{1- (1+x)^2} + \frac{2.1 \theta}{\sqrt{1- (1+x)^2}}.
\end{align*}
Since $1-(1+x)^2$ is increasing on $ \left[-2, -\frac{1+\sqrt{3}}{2}\right]$, we have 
\[
    1-(1+x)^2 \geq \frac{\sqrt{3}}{2} \quad \text{on } x \in \left[-2, -\frac{1+\sqrt{3}}{2}\right].
\]
This implies that 
\[
    \frac{g''(x)}{2} \geq -2 - 4x - 4.2 \theta \sqrt{\frac{\sqrt{3}}{2}} + 2.1\theta \sqrt{\frac{2}{\sqrt{3}}}.
\]
But we have 
\[
    - 4.2 \theta \sqrt{\frac{\sqrt{3}}{2}} + 2.1\theta \sqrt{\frac{2}{\sqrt{3}}} \approx -2.16 > -2.2, \, \,  \text{and }   -\frac{1+\sqrt{3}}{2} < -\frac{4.2}{4}.
\]
Therefore, we obtain 
\[
    \frac{g''(x)}{2} > -4.2 - 4x > 0, \, \,  \text{whenever } -2 \leq x \leq - \frac{1+\sqrt{3}}{2}.
\]
\textit{Step 2) }In Step 2, we will show that 
\begin{equation}\label{poly g monotone}
    g'(x) >0 \, \, \text{on } x \in \left(-\frac{1+\sqrt{3}}{2}, -1 \right].
\end{equation}
First, note that 
\[
g'(x) = \underbrace{2 - 4x - 4x^2}_{:=h_1(x)} - \underbrace{4.2\theta (x+1)\sqrt{1-(1+x)^2}}_{=:h_2(x)}.
\]
We can easily see that $h_2(x) \leq 0$ on $(-2,-1)$. Since $-\frac{1\pm\sqrt{3}}{2}$ are the two roots of $h_1$, we deduce that \eqref{poly g monotone} holds.\\
\textit{Step 3) } Here, we will show that $g'(x) > 0$ on $\left(-1, \frac{-3+\sqrt{5}}{2}\right)$. Note that 
\[
g'(x) = -4(1+x)^2 + 4(1+x) + 2 - 4.2\theta (1+x)\sqrt{1-(1+x)^2}.
\]
If we denote $A := x+1$, then we can rewrite the above equation into
\[
g'(A) = -4A^2 + 4A + 2 - 4.2 \theta A \sqrt{1-A^2} \, \, \text{on } A \in \left(0, \frac{-1 + \sqrt{5}}{2}\right).
\]
We now claim that 
\[
h(A) := \frac{g'(A)}{A} = -4A + 4 + \frac{2}{A} - 4.2\theta \sqrt{1-A^2} > 0 \, \, \text{on } A \in \left(0, \frac{-1 + \sqrt{5}}{2}\right).
\]
Indeed, if $h$ attains its minimum at $A_* \in \left(0, \frac{-1 + \sqrt{5}}{2}\right)$, then we have 
\[
h'(A_*) = -4-\frac{2}{A_*^2} + 4.2\theta \frac{ A_*}{\sqrt{1-A_*^2}} = 0,
\]
which implies 
\[
\sqrt{1-A_*^2} = 4.2\theta \frac{A_*^3}{4A_*^2+2}.
\]
Then, we obtain 
\begin{align*}
    h(A_*) &= -4A_* +  4 + \frac{2}{A_*} - (4.2\theta)^2   \frac{A^3}{4A_*^2+2}\\
    &\geq   -4A_* +  4 + \frac{2}{A_*}- (4.2\theta)^2  \frac{A_*}{4}.
\end{align*}
Note that the function $h_1(A):= -4A +  4 + \frac{2}{A}- (4.2\theta)^2  \frac{A}{4}$ is decreasing on the interval $\left(0, \frac{-1 + \sqrt{5}}{2}\right)$. Therefore, we obtain 
\[
h(A_*) \geq h_1\left( \frac{-1 + \sqrt{5}}{2} \right) \approx 0.1 > 0.
\]
Since $h(A) \to +\infty$ as $A \to 0+$, and $h\left( \frac{-1 + \sqrt{5}}{2} \right) \approx 1.36 >0$, we conclude that $h(A) > 0$ on the interval $\left(0, \frac{-1 + \sqrt{5}}{2}\right)$.\\
\textit{Step 4) } Now, we prove our lemma. Since $g(x)$ is continuous on $[-2,0]$, so it attains maximum on $[-2,0]$.

Suppose that the global maximum is achieved at $x_* \in (-2,0)$. At this point, $g$ satisfies $g'(x_*)=0$ and $g''(x_*) \leq 0$. From Step 1 to Step 3, we have $x_* \in \left( \frac{-3+\sqrt{5}}{2}, 0\right)$. Since $g(0)=0$, and $g(x_*)$ is supposed to be a global maximum, we have $g(x_*)\geq 0$.

On the other hand, since $g'(x_*) = 0$, we get
\[
    g'(x_*) = 2-4x_* - 4x_*^2 - \frac{21}{5} \theta (1+x_*)\sqrt{1- (1+x_*)^2} = 0,
\]
which implies 
\begin{equation}\label{g argmax}
    \sqrt{1- (1+x_*)^2} = \frac{5}{21\theta} \frac{2-4x_* - 4x_*^2}{1+x_*}.
\end{equation}
From \eqref{g argmax}, we obtain
\begin{align*}
    g(x_*) &=  2x_* - 2x_*^2 - \frac{4}{3}x_*^3 +\frac{7}{5}\theta \big(-x_*^2 -2x_* \big)^{1/2}(-x_*^2 -2x_*)\\
    &=2x_* - 2x_*^2 - \frac{4}{3}x_*^3 + \frac{(2-4x_* - 4x_*^2)(-x_*^2 -2x_*)}{3(1+x_*)}\\
    &= \frac{2x_*(x_*^2+3x_*+1)}{3(1+x_*)}.
\end{align*}
Since
\[
    \frac{2x(x^2+3x+1)}{3(1+x)} < 0 \, \, \text{on } x \in \left( \frac{-3+\sqrt{5}}{2}, 0\right),
\]
we have $g(x_*)<0$, which yields contradiction. 

Therefore, $g$ attains its global maximum only at $x=0$ or $x=-2$. Since $g(-2) = -4/3$ and $g(0)=0$, we conclude that 
\[
g(x) < 0 \, \, \text{on } x \in [-2,0).
\]
\end{proof}
Now, we prove Lemma \ref{algebraic}. \vspace{0.3cm}

\textit{Proof of Lemma \ref{algebraic}:} As in \cite{KV21}, we split the proof into three steps.\\
\textit{Step 1)} For $r>0$, we denote $B_r(0)$ the open ball centered at the origin with radius $r$. In this step, we will show the following:

There exist $r>0$ and $\d_2>0$ such that for any $\d < \d_2$,
\begin{equation}\label{alg claim1}
    P_\d (Z_1, Z_2) - |E(Z_1, Z_2)|^2 \leq 0, \, \, \text{whenever } (Z_1, Z_2) \in B_r(0).
\end{equation}
To verify this claim, notice first that $|Z_1|, |Z_2| \leq r$ on $B_r(0)$. This implies that 
\[
|2Z_1|^2 = (E - (Z_1^2+ Z_2^2))^2 \leq 2|E|^2 + 2|Z_1^2 + Z_2^2|^2 \leq 2|E|^2 + 2r^2(Z_1^2 + Z_2^2).
\]
Therefore, we have 
\[
-|E|^2 \leq -2Z_1^2 + r^2(Z_1^2 + Z_2^2).
\]
Thus, for any $(Z_1, Z_2) \in B_r(0)$,
\begin{align*}
    & P_\d (Z_1, Z_2) - |E(Z_1, Z_2)|^2 \\
    &\quad \leq -2Z_1^2 + (1+\d)\left(Z_1^2 + Z_2^2 + \frac{r^2}{1+\d}(Z_1^2 + Z_2^2) + \frac{(2+6\d)r}{1+\d}Z_2^2 + \frac{((2/3) + 6\d)r}{1+\d}Z_1^2\right)\\
    &\qquad -2\left(1-\d - \left(\frac{2}{3} + \d\right)\frac{21}{20}\theta r\right)Z_2^2.
\end{align*}
Taking $\d_2$ and $r$ small enough, we obtain that for any $\d < \d_0$,
\[
    P_\d (Z_1, Z_2) - |E(Z_1, Z_2)|^2\leq 0 \, \, \text{on } B_r(0).
\]
\textit{Step 2)} In Step 2, we will prove the following claim:

There exists $\d_2>0$ and $\d_3>0$ such that for any $0<\d <\d_0$, we have
\begin{equation}\label{alg claim2}
    P_\d(Z_1, Z_2)<0, \, \, \text{whenever } |E(Z_1,Z_2)|\leq \d_3, \, \, \text{and } (Z_1, Z_2) \notin B_r(0).
\end{equation}
To prove the claim, we first observe that the limiting case: $\d=0$ and $E(Z_1,Z_2)=0$.

If $\d=0$ and $E(Z_1,Z_2)=0$, then we have 
\[
P_0(Z_1, Z_2) = 2Z_1 - 2Z_1^2 - \frac{4}{3}Z_1^3 + \frac{7}{5}\theta \big(-Z_1^2 - 2Z_1\big)^{3/2}, \quad Z_1^2 + Z_2^2 + 2Z_1 = 0.
\]
Since $(Z_1+1)^2 + Z_2^2=1$ by $E=0$, we have $-2 \leq Z_1 \leq 0$. By Lemma \ref{polynomial ineq}, we obtain 
\[
P_0(Z_1, Z_2)<0, \quad Z_1^2 + Z_2^2 + 2Z_1 = 0, \quad Z_1 \neq 0.
\]
Since $P_0$ is continuous, it attains its maximum $-c<0$ on the compact set $\{E(Z_1, Z_2)=0\} \setminus B_r(0)$. In addition, $P_0$ is uniformly continuous on the compact set $\{E(Z_1, Z_2) \leq 1\} \setminus B_r(0)$, so there exist $0<\d_3<1$ such that 
\[
P_0(Z_1, Z_2) < -c/2, \, \, \text{whenever } |E(Z_1, Z_2)| \leq \d_3, \, \, \text{and } (Z_1,Z_2) \notin B_r(0).
\]
Taking $\d_2$ small enough, for any $\d<\d_2$, we get 
\[
    P_\d(Z_1, Z_2) < 0, \, \, \text{whenever } |E(Z_1, Z_2)| \leq \d_3, \, \, \text{and } (Z_1,Z_2) \notin B_r(0).
\]
This proves our claim.\\
\textit{Step 3)} Hence, from \eqref{alg claim1} and \eqref{alg claim2}, we obtain \eqref{algebraic inequality}.\\
\qed
\section{Proof of Theorem \ref{timedecay}}\label{appendix B}
\setcounter{equation}{0}
Here, we present the proof of $L^\infty$-bound of the shift function $X(t)$ and $L^2$-time decay rate of the perturbation $u^X - \util$, under the condition that the initial perturbation satisfies $u_0 - \util \in L^1 \cap L^\infty(\Omega)$. In what follows, let $C_0$ be a positive constant that depends on $\|u_0\|_{L^\infty(\Omega)}$, and may change from line to line.

Now, we assume that
\[
    u_0 - \util \in L^1 \cap L^\infty(\Omega).
\]
\subsection{$L^\infty$ bound of shift $X(t)$}
In this subsection, we show the shift function $X(t)$ is bounded. Since the proof of \eqref{L inf shift} is almost identical to \cite[Section 3]{KO24}, we will give a sketch for the proof of \eqref{L inf shift}. 

First, we state the $L^1$-contraction property of scalar viscous conservation law, which is a well-known result.
\begin{lemma}\label{L1contraction}
    \cite[Theorem 6.3.2]{dafermos2005hyperbolic} Let $u$ and $v$ be solutions to
    \[
    u_t +  \div_x F(u) = \Delta u,
    \]
    with respective initial data $u_0$ and $v_0$ in $L^\infty(\Omega)$. Assume that the flux $F$ is $C^1$. If the initial data $u_0$ and $v_0$ satisfy $u_0 - v_0 \in L^1 \cap L^\infty (\Omega)$, we have the $L^1$ contraction
    \[
    \| u-v \|_{L^1(\Omega)} \leq \| u_0 - v_0 \|_{L^1(\Omega)}.
    \]
    \end{lemma}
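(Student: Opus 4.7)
The plan is to carry out the standard entropy--dissipation argument for $L^1$-contraction, adapted to the viscous case. Setting $w := u - v$ and subtracting the two equations yields
\[
w_t + \div_x\big(F(u) - F(v)\big) = \Delta_x w \quad \text{on } (0, \infty) \times \Omega,
\]
with initial datum $w|_{t=0} = u_0 - v_0 \in L^1 \cap L^\infty(\Omega)$. The $L^\infty$ maximum principle for \eqref{SVCL} bounds both $u$ and $v$ in $L^\infty$ by $M := \max\{\|u_0\|_{L^\infty}, \|v_0\|_{L^\infty}\}$, so one may fix the Lipschitz constant $L := \|F'\|_{L^\infty([-M,M])}$ on the relevant range.

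Next I would introduce the smooth convex approximation $\eta_\kappa(s) := \sqrt{s^2 + \kappa^2} - \kappa$ of $|s|$, test the $w$-equation against $\eta_\kappa'(w)$, and integrate over $\Omega$. Integration by parts gives the non-positive dissipation $-D_\kappa := -\int_\Omega \eta_\kappa''(w)|\nabla w|^2 dx$ from the Laplacian, while the flux term becomes $\int_\Omega \eta_\kappa''(w)\, w\, \nabla w \cdot B(u,v)\, dx$, where $B(u,v) := \int_0^1 F'(\tau u + (1-\tau) v)\, d\tau$ satisfies $|B| \leq L$. Cauchy--Schwarz and Young's inequality absorb half of $D_\kappa$ and leave
\[
\frac{d}{dt} \int_\Omega \eta_\kappa(w)\, dx \;\leq\; \frac{L^2}{2} \int_\Omega \eta_\kappa''(w)\, w^2\, dx.
\]
The elementary bound $\eta_\kappa''(s)\, s^2 = \tfrac{\kappa^2 s^2}{(s^2+\kappa^2)^{3/2}} \leq \tfrac{s^2}{\sqrt{s^2+\kappa^2}} \leq |s|$, together with the pointwise vanishing $\eta_\kappa''(s) s^2 \to 0$ as $\kappa \to 0$, shows the error integrand is dominated by $|w| \in L^1(\Omega)$ uniformly in $\kappa$ and tends pointwise to zero.

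Integrating in time and passing $\kappa \to 0$ by the dominated convergence theorem yields the claimed contraction $\|u(t) - v(t)\|_{L^1(\Omega)} \leq \|u_0 - v_0\|_{L^1(\Omega)}$. The main hurdle is that dominated convergence on the right-hand side requires a \emph{time-uniform} $L^1$-bound on $w$, which is essentially the sharp contraction itself. I would resolve this bootstrap issue by first deriving the rougher Gronwall-type estimate $\|w(t)\|_{L^1(\Omega)} \leq e^{Ct}\|w_0\|_{L^1(\Omega)}$ on any finite time interval (for instance by running the same test with a coarser Young constant and a cutoff at infinity), which suffices for the domination hypothesis; the limiting procedure then retroactively upgrades this rough estimate to the sharp $L^1$-contraction. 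As an alternative, one can apply Kruzhkov's classical doubling-of-variables technique directly to the viscous equation, where the added Laplacian contributes only terms of favorable sign in the entropy calculus.
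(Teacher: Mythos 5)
The paper supplies no proof of this lemma; it is cited directly from \cite[Theorem~6.3.2]{dafermos2005hyperbolic}. Your proposal is the standard derivation of viscous $L^1$-contraction: regularize $|\cdot|$ by $\eta_\kappa(s)=\sqrt{s^2+\kappa^2}-\kappa$, test the $w$-equation against $\eta_\kappa'(w)$, keep the favorable diffusion term $-\int_\Omega\eta_\kappa''(w)|\nabla w|^2$, and absorb the flux remainder $\int_\Omega\eta_\kappa''(w)\,w\,\nabla w\cdot B(u,v)$ by Young's inequality, followed by dominated convergence as $\kappa\to0$ using $\eta_\kappa''(s)s^2\le|s|$ and the pointwise vanishing of that quantity. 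The algebra and the sign bookkeeping are correct, and you have put your finger on exactly the right delicacy.

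There is, however, a residual gap at the very step you flag. Both limit passages (left side $\to\|w(t)\|_{L^1}$ and the Young remainder $\to 0$) require $|w|$ to be integrable over $(0,t)\times\Omega$, but on the unbounded domain $\Omega=\RR\times\TT^2$ this is not automatic from $w_0\in L^1\cap L^\infty$. Your proposed rescue --- first obtain $\|w(t)\|_{L^1}\le e^{Ct}\|w_0\|_{L^1}$ and then upgrade --- is itself circular as stated: the integral inequality $\|w(t)\|_{L^1}\le\|w_0\|_{L^1}+\tfrac{L^2}{2}\int_0^t\|w(s)\|_{L^1}\,ds$ only yields the Gronwall bound once one already knows that $s\mapsto\|w(s)\|_{L^1}$ is finite and locally integrable. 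The standard ways to close this are precisely the ones you gesture at but do not carry out: run the entire argument on truncated cylinders with a spatial cutoff, control the boundary contribution using the $L^\infty$ maximum principle and the Lipschitz bound $|F'|\le L$, and let the radius tend to infinity; or approximate $u_0-v_0$ by compactly supported data, derive the contraction for those (where $w(t)\in L^1$ follows from parabolic decay), and pass to the limit by $L^1$-density. Your alternative of Kruzhkov-type doubling applied to the viscous equation sidesteps the sign issue more structurally and is essentially how the cited monograph proves the result (within the $L^1$-nonexpansive semigroup framework); that route avoids the bootstrap entirely. As written, the proof is incomplete at that single integrability step, though the surrounding outline is sound.
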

    $\bullet$ \textit{Proof of \eqref{L inf shift}}: Following the calculations in \cite[Section 3]{KO24}, we have
\[
    |X(t)| \leq C\left(\int_\Omega |(u^X - \tiu(x_1 - \sigma t))|^2\,dx + 4 \|\tiu\|_\infty \int_\Omega |u^X- \tiu^X(x_1 - \sigma t)|\,dx\right)+1.
\]
From \eqref{L2 type contraction} and Lemma \ref{L1contraction}, we obtain
\begin{align*}
    |X(t)| \leq C\big(C_0 \| u_0 - \tiu \|_{L^2(\Omega)}^2 + 4 \max\{|u_-|,|u_+|\} \| u_0 - \tiu \|_{L^1(\Omega)}\big)+1, \quad \forall t>0.
\end{align*}
For convenience, we put
\begin{equation}\label{C* def}
    C_* := \max\{C_0,1\} \cdot \big(1+\| u_0 - \tiu \|_{L^2(\Omega)}^2  +\| u_0 - \tiu \|_{L^1(\Omega)}\big).
\end{equation}
Thus, we obtain
\begin{equation}\label{X bound conclusion}
    \|X\|_{L^\infty(\RR_+)} \leq C C_*
\end{equation}
\subsection{Proof of time decay estimate}
Now, we prove the time decay estimate \eqref{time estimate}. As in \cite{KO24}, we use the Gagliardo-Nirenberg type interpolation in $\Omega = \RR \times \mathbb{T}^2$ which was proved by Huang and Yuan \cite{Huang21}.

First, recall from \eqref{cont conclusion} that we have 
\begin{equation}\label{modified contraction}
        \frac{d}{dt} \intO a \eta(u^X|\util)\,dx  + \d_0 D(u) \leq 0.
    \end{equation}
Now, we use the following lemma.
\begin{lemma} \label{Gn interpolation}
    \cite[Theorem 1.4]{Huang21}
    (Gagliardo Nirenberg type inquality in $\Omega := \bbr \times \mathbb{T}^{2}$)
    Let $f \in L^1(\Omega)$ and $\nabla f \in L^2(\Omega)$, and $f$ is periodic in the $x_i$ direction for $i=2,3$. Then, it holds that
    \begin{equation}\label{GN}
    \|f\|_{L^2(\Omega)} \leq \sum_{k=0}^{2} \| \nabla f \|_{L^2(\Omega)}^{\theta_k} \| f \|_{L^1(\Omega)}^{1-\theta_k},
    \end{equation}
    where $\theta_k = \frac{k+1}{k+3}$, and the constant $C>0$ is independent of $f$.
    \end{lemma}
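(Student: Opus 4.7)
The plan is to split $f$ into its transverse average and fluctuation, and handle each piece with the Gagliardo--Nirenberg or Sobolev estimate that matches its effective dimension. Write $f = \bar f(x_1) + \tilde f(x_1,x')$ where $\bar f(x_1) := \int_{\mathbb{T}^2} f(x_1,x')\,dx'$, so $\tilde f$ has zero $\mathbb{T}^2$-mean for each $x_1$, and apply the triangle inequality $\|f\|_{L^2(\Omega)} \leq \|\bar f\|_{L^2(\Omega)} + \|\tilde f\|_{L^2(\Omega)}$ to treat the two parts separately.

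For $\bar f$, which is effectively a function on $\mathbb{R}$, I would invoke the classical one-dimensional Gagliardo--Nirenberg inequality $\|\bar f\|_{L^2(\mathbb{R})} \leq C\|\bar f'\|_{L^2(\mathbb{R})}^{1/3}\|\bar f\|_{L^1(\mathbb{R})}^{2/3}$. Since $|\mathbb{T}^2|=1$, Fubini gives $\|\bar f\|_{L^2(\Omega)}=\|\bar f\|_{L^2(\mathbb{R})}$, while Jensen yields $\|\bar f'\|_{L^2(\mathbb{R})}\leq\|\partial_{x_1}f\|_{L^2(\Omega)}\leq\|\nabla f\|_{L^2(\Omega)}$ and $\|\bar f\|_{L^1(\mathbb{R})}\leq\|f\|_{L^1(\Omega)}$. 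This produces the $k=0$ term with $\theta_0=1/3$. For $\tilde f$, the zero-$\mathbb{T}^2$-mean property and Poincar\'e give $\|\tilde f(x_1,\cdot)\|_{L^2(\mathbb{T}^2)}\leq C\|\nabla_{x'}\tilde f(x_1,\cdot)\|_{L^2(\mathbb{T}^2)}$, so that $\|\tilde f\|_{H^1(\Omega)}\leq C\|\nabla f\|_{L^2(\Omega)}$. I would then invoke the Sobolev embedding $H^1(\Omega)\hookrightarrow L^6(\Omega)$, valid because $\Omega$ has effective dimension $3$, obtaining $\|\tilde f\|_{L^6(\Omega)}\leq C\|\nabla f\|_{L^2(\Omega)}$. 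The Lebesgue interpolation $\|\tilde f\|_{L^2}\leq \|\tilde f\|_{L^1}^{2/5}\|\tilde f\|_{L^6}^{3/5}$ (consistent with $\tfrac{1}{2}=\tfrac{2/5}{1}+\tfrac{3/5}{6}$), combined with $\|\tilde f\|_{L^1}\leq 2\|f\|_{L^1}$, then produces the $k=2$ term with $\theta_2=3/5$.

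The intermediate $k=1$ term is not generated by this decomposition, but since it is nonnegative, adding it on the right-hand side only weakens the bound; hence the $k=0$ and $k=2$ estimates already majorize $\|f\|_{L^2(\Omega)}$ and the claimed three-term inequality follows. The main technical obstacle is justifying the Sobolev embedding $H^1(\Omega)\hookrightarrow L^6(\Omega)$ on the mixed domain $\mathbb{R}\times\mathbb{T}^2$; I expect to resolve this either by a partition-of-unity argument on $\mathbb{T}^2$ reducing each coordinate patch to a standard $\mathbb{R}^3$ Sobolev embedding, or by a partial Fourier decomposition $f=\sum_{k\in\mathbb{Z}^2}\hat f_k(x_1)e^{2\pi i k\cdot x'}$, treating the zero mode $\hat f_0$ exactly as in the first paragraph and bounding the non-zero modes by Parseval in $x'$ together with one-dimensional Sobolev estimates in $x_1$, using the lower bound $|k|\geq 1$ to close the estimate.
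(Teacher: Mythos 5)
The paper does not give a proof of this lemma at all; it is a direct citation to Huang--Yuan, so your argument is a genuinely different (self-contained) route. Your decomposition $f=\bar f+\tilde f$ into $\mathbb{T}^2$-average and zero-mean fluctuation is a natural way to organize it: the $\bar f$ piece is effectively one-dimensional, and the 1D Nash inequality with exponent $1/3$ (combined with Fubini and Jensen to pass between $\Omega$ and $\mathbb{R}$) yields the $k=0$ term with the correct exponents; for $\tilde f$, the fiber-wise Poincar\'e inequality on $\mathbb{T}^2$ is precisely the step that upgrades control on $\|\nabla f\|_{L^2}$ to control on the full $H^1$-norm of $\tilde f$, which is essential because the homogeneous Sobolev embedding $\dot H^1\hookrightarrow L^6$ fails on $\mathbb{R}\times\mathbb{T}^2$ (a dilation in $x_1$ alone breaks it), while the inhomogeneous embedding $H^1(\mathbb{R}\times\mathbb{T}^2)\hookrightarrow L^6$ does hold (e.g.\ by periodic extension in $x'$ and a cutoff reducing to the $\mathbb{R}^3$ Sobolev inequality); the $L^1$--$L^6$ Lebesgue interpolation with $\alpha=2/5$ then produces the $k=2$ term. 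Your observation that only the $k=0$ and $k=2$ terms are needed, and that adding the nonnegative $k=1$ term only weakens the inequality, is correct --- so in fact your argument gives a slightly stronger, two-term version. What this approach buys over a bare citation is transparency and elementariness; what it costs is a few standard but nontrivial ingredients (1D Nash, fiber-wise Poincar\'e, $H^1\hookrightarrow L^6$ on the mixed domain) that must each be justified. The one point I would make explicit in a careful write-up is the usual regularity bootstrap for Gagliardo--Nirenberg statements: prove the bound first for smooth compactly supported $f$, where $\|f\|_{L^2}$ and $\|\tilde f\|_{H^1}$ are manifestly finite, and then extend to the class $f\in L^1$, $\nabla f\in L^2$ by density/truncation, since the hypotheses alone do not give $f\in L^2$ a priori.
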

$\bullet$ \textit{$L^1$ bound of perturbation:} To apply the interpolation inequality \eqref{GN}, we first need to find the $L^1$-bound of perturbation $\eta'(u^X)-\eta'(\util)$. Note that 
\begin{align*}
    \|\eta'(u^X)-\eta'(\util)\|_{L^1(\Omega)} \leq &\|\eta'(u^X)-\eta'(\util^X)\|_{L^1(\Omega)} + \|\eta'(\util^X)-\eta'(\util)\|_{L^1(\Omega)} =: I_1 + I_2.
\end{align*}
For $I_1$, we use Lemma \ref{L1contraction} to have 
\begin{align*}
    I_1 = \|\eta'(u)-\eta'(\util)\|_{L^1(\Omega)}\leq C_0 \|u-\util(\cdot - \s t)\|_{L^1(\Omega)} \leq   C_0 \|u_0-\util\|_{L^1(\Omega)}.
\end{align*}
To estimate $I_2$, observe that (see \cite[Section 3]{Kang-V-1} for details)
\[
I_2 = \|\eta'(\util^X)-\eta'(\util)\|_{L^1(\Omega)} \leq C_0 \|\util^X-\util\|_{L^1(\Omega)} = C_0 |X(t)|(u_- - u_+).
\]
Using \eqref{C* def} and \eqref{X bound conclusion}, we get the $L^1$ estimate:
\begin{equation}\label{L1 conclusion}
    \|\eta'(u^X)-\eta'(\util)\|_{L^1(\Omega)} \leq I_1 + I_2 \leq C C_*.
\end{equation}
$\bullet$ \textit{Decay estimate for perturbation:} Using the interpolation inequality \eqref{GN} and the $L^1$ bound \eqref{L1 conclusion}, we have
\begin{align*}
    &\|\eta'(u^X) - \eta'(\tiu(\cdot - \sigma t))\|_{L^2(\Omega)}\\
    &\leq C\sum_{k=0}^{2} \|\nabla(\eta'(u^X) - \eta'(\tiu(\cdot - \sigma t)))\|_{L^2(\Omega)}^{\frac{k+1}{k+3}} \|\eta'(u^X) - \eta'(\tiu(\cdot - \sigma t))\|_{L^1(\Omega)}^{1-\frac{k+1}{k+3}}\\
    & \leq C C_* \sum_{k=0}^{2}(C_*^{-1}\|\nabla(\eta'(u^X) - \eta'(\tiu(\cdot - \sigma t)))\|_{L^2(\Omega)})^{\frac{k+1}{k+3}}.
    \end{align*}
    As in \cite{KO24}, if we denote $B := C_*^{-1}\|\nabla(u^X - \tiu(\cdot - \sigma t))\|_{L^2(\Omega)}$, the above inequality can be written as follows:
    \[
    \|\eta'(u^X) - \eta'(\tiu(\cdot - \sigma t))\|_{L^2(\Omega)} \leq CC_* (B^{1/3} + B^{2/4} + B^{3/5}).
    \]
    Now, we claim that
    \[
    \|\eta'(u^X) - \eta'(\tiu(\cdot - \sigma t))\|_{L^2(\Omega)} \leq 3C C_* B^{1/3}.
    \]
    To prove the claim, we split into two cases: $B \leq 1$ and $B \geq 1$.
    \begin{enumerate}
    \item[(i)] If $B \leq 1$, we have 
    \[
    \|\eta'(u^X) - \eta'(\tiu(\cdot - \sigma t))\|_{L^2(\Omega)} \leq 3 C C_0 B^{1/3}.
    \]
    \item[(ii)] Now, suppose that $B>1$. Then, we obtain the following upper bound
    \[
    \|u^X - \tiu(\cdot - \sigma t)\|_{L^2(\Omega)} \leq 3C C_*  B^{3/5}.
    \]
    On the other hand, the $L^2$-contraction estimate \eqref{L2 type contraction} implies that 
    \[
    \|\eta'(u^X) - \eta'(\tiu(\cdot - \sigma t))\|_{L^2(\Omega)} \leq C_0  \| u_0 - \tiu \|_{L^2(\Omega)} \leq \sqrt{C_0} \sqrt{C_*} \leq C_*.
    \]
    Therefore, we have
    \begin{align*}
    &\|\eta'(u^X) - \eta'(\tiu(\cdot - \sigma t))\|_{L^2(\Omega)}\\
    &= \|\eta'(u^X) - \eta'(\tiu(\cdot - \sigma t))\|_{L^2(\Omega)}^{\frac{5}{9}} \|\eta'(u^X) - \eta'(\tiu(\cdot - \sigma t))\|_{L^2(\Omega)}^{1-\frac{5}{9}} \\
    &\leq (3 C C_* B^{\frac{3}{5}})^{\frac{5}{9}} C_*^{1-\frac{5}{9}} \leq 3C C_* B^{1/3}.
    \end{align*}
    \end{enumerate}
    Thus, we get
    \begin{equation*}
    \begin{aligned}
        \| \eta'(u^X) - \eta'(\tiu(\cdot - \sigma t)) \|_{L^2(\Omega)} &\leq 3C C_0  B^{1/3}= 3C C_*^{2/3}\|\nabla(\eta'(u^X) - \eta'(\tiu(\cdot - \sigma t)))\|_{L^2(\Omega)}^{\frac{1}{3}}.
    \end{aligned}
    \end{equation*}
    This implies that
    \[
    \| \eta'(u^X) - \eta'(\tiu(\cdot - \sigma t))\|_{L^2(\Omega)}^6 \leq C C_*^4 \|\nabla( \eta'(u^X) - \eta'(\tiu(\cdot - \sigma t)))\|_{L^2(\Omega)}^{2}.
    \]
    Thus, by the contraction estimate \eqref{modified contraction}, we obtain
    \begin{align*}
        \frac{d}{dt} \intO a \eta(u^X|\util)\,dx  &\leq -\d_0 \intO |\nb ( \eta'(u^X) - \eta'(\tiu(\cdot - \sigma t)))|^2\,dx\\
        &\leq  -\frac{\d_0}{C C_*^4} \| \eta'(u^X) - \eta'(\tiu(\cdot - \sigma t))\|_{L^2(\Omega)}^6.
    \end{align*}
    By the maximum principle, note that 
    \[
        \eta(u^X|\util) \leq C_0|\eta'(u^X) - \eta'(\util)|^2  \leq C_*|\eta'(u^X) - \eta'(\util)|^2.
    \]
    Thus, we have 
    \begin{equation}\label{ent gronwall}
        \frac{d}{dt} \intO a \eta(u^X|\util(\cdot - \s t))\,dx \leq - \frac{C}{C_*^3}\d_0 \left( \intO a \eta(u^X|\util(\cdot - \s t))\,dx \right)^3.
    \end{equation}
    Hence, from Lemma \ref{entropy ineq}, \eqref{ent gronwall} and the inequality $2(x+y)^{1/4} \geq x^{1/4} + y^{1/4}$, we conclude that
    \begin{align*}
    \|u^X - \tiu(\cdot - \sigma t)\|_{L^2(\Omega)} &\leq C\sqrt{\intO a \eta(u^X|\util(\cdot - \s t))dx} \leq C\left(\frac{C_*^3\left( \intO a \eta(u_0|\util)dx \right)^2}{2C\d_0\left( \intO a \eta(u_0|\util)dx \right)^2 t + C_*^3} \right)^\frac{1}{4}\\
    & \leq \frac{2C C_*^{3/4}\left( \intO a \eta(u_0|\util)\,dx \right)^{1/2}}{(2C\d_0)^{1/4}\left( \intO a \eta(u_0|\util)\,dx \right)^{1/2} t^{1/4} + C_*^{3/4}}.
    \end{align*}
    This completes the proof of \eqref{time estimate}.
    \qed


 
\vspace{0.5cm}
\bibliography{reference.bib}
\end{document}